\documentclass[11pt]{article}

\usepackage[margin=1.08in]{geometry}
\usepackage[T1]{fontenc}
\usepackage[utf8]{inputenc}
\usepackage{lmodern}
\usepackage{microtype}
\usepackage{amsmath,amssymb,amsthm,mathtools}
\usepackage{enumitem}
\usepackage{ytableau}
\usepackage{tikz}
\usetikzlibrary{arrows.meta,positioning}
\usepackage[dvipsnames]{xcolor}
\usepackage[colorlinks=true,linkcolor=MidnightBlue,citecolor=ForestGreen,urlcolor=BrickRed]{hyperref}
\hypersetup{
  pdftitle={Dual Weight and Monodromy of Dual Affine RS Correspondence},
  pdfsubject={The dual weight, common monodromy, and its relation to the AMBC weight}
}

\newtheorem{theorem}{Theorem}[section]
\newtheorem{proposition}[theorem]{Proposition}
\newtheorem{lemma}[theorem]{Lemma}
\newtheorem{corollary}[theorem]{Corollary}
\theoremstyle{definition}
\newtheorem{definition}[theorem]{Definition}

\newtheorem{example}[theorem]{Example}
\theoremstyle{remark}
\newtheorem{remark}[theorem]{Remark}

\newcommand{\Z}{\mathbb Z}
\newcommand{\Sn}{\widetilde S_n}
\newcommand{\RSd}{\operatorname{DARS}}
\newcommand{\AMBC}{\operatorname{AMBC}}
\newcommand{\ol}[1]{\overline{#1}}
\newcommand{\row}{\mathfrak{row}}
\newcommand{\RSYT}{\textup{RSYT}}
\newcommand{\yttab}[1]{%
  \ytableausetup{boxsize=.6cm,aligntableaux=center}%
  \begin{ytableau}#1\end{ytableau}}
\newcommand{\ind}{\operatorname{ind}}
\newcommand{\lch}{\operatorname{lch}}
\newcommand{\xch}{\operatorname{xch}}
\newcommand{\RRSS}{\operatorname{RRSS}}
\newcommand{\Dom}{\operatorname{Dom}}
\newcommand{\e}{\mathbf e}
\newcommand{\Rop}{\mathsf R}
\newcommand{\Lop}{\mathsf L}
\newcommand{\E}{\mathsf E}
\newcommand{\A}{\mathcal A}

\title{Dual Weight and Monodromy of Dual Affine RS Correspondence}
\author{
Yifeng ZHANG\thanks{
Email: \tt calvinz314159@gmail.com
}
}
\date{South China Normal University}

\begin{document}
\maketitle

\begin{abstract}
The dual affine Robinson--Schensted correspondence and the affine matrix-ball construction give two related parametrizations of extended affine permutations.
From the stable-window data of the dual correspondence, we introduce a dual weight \(\beta\) and prove that it is consistent with the original pair \((\lambda,N_0)\).
We prove that \(\beta\) and the AMBC weight \(\rho\) have identical monodromy along affine Knuth paths.
We further give an explicit relation between \(\beta\) and \(\rho\), showing that their difference depends only on the associated tabloids.
\end{abstract}

\section{Introduction}\label{sec:introduction}

The classical Robinson--Schensted correspondence converts a permutation into a pair of
standard Young tableaux of the same shape.  Besides its original insertion-theoretic
form, it admits diagrammatic realizations through Fomin's growth diagrams and the
Viennot--Fulton matrix-ball viewpoint.  The different models emphasize complementary
features: insertion is effective for computation, while local growth rules and shadow
lines make symmetries of the correspondence especially visible
\cite{Fom95,Vie77,Ful97}.  On
the algebraic side, the recording tableau, insertion tableau, and common shape
parametrize the left, right, and two-sided Kazhdan--Lusztig cells in type~A
\cite{KL79}.  On the geometric side, Robinson--Schensted can be recovered from relative
positions of flags and the Jordan types induced on their intersections
\cite{Ste88,BF01}.  These interpretations make the correspondence a meeting point of
algebraic combinatorics, Hecke-algebra cells, and flag geometry.

The affine theory retains this three-way interaction but introduces genuinely infinite
data.  Shi constructed an affine Robinson--Schensted map from affine permutations to
pairs of tabloids and proved that its insertion tabloid detects right cells
\cite{Shi91}.  Since the set of affine permutations is infinite whereas the set of
tabloid pairs of a fixed size is finite, this map cannot be injective.  Honeywill
completed the tabloid pair by a dominant weight \cite{Hon05}.  Chmutov, Pylyavskyy, and
Yudovina then gave a bijective and diagrammatic refinement, the affine matrix-ball
construction (AMBC) \cite{CPY18}.  AMBC decomposes the periodic permutation matrix into
streams, records their domain and image residues as two tabloids, and stores their
relative altitudes in a dominant integral vector \(\rho\).  It also relates the affine
correspondence to asymptotic ordinary Robinson--Schensted insertion; the Weyl-group
symmetry in its inverse construction explains why the weight is subject to dominance.

Huang and Zhang introduced a dual affine Robinson--Schensted correspondence using
periodic growth diagrams and intrinsically colored shadow lines \cite{HZ26}.  Their
construction simultaneously extends the Fomin--Viennot diagrammatic models and is dual
to Shi's correspondence and AMBC: the insertion tabloids agree, while the recording
tabloids are related by affine evacuation \cite{CFKLY22}.  It therefore recovers the
known affine Kazhdan--Lusztig cell parametrization and suggests a prospective geometric
interpretation in terms of relative positions of affine flags.  Its output has the form
\[
                 \RSd(w)=(\bar P,\bar Q,\lambda,N_0),
\]
where \(\bar P,\bar Q\) are tabloids of common shape \(\mu\), \(\lambda\) is the
partition in the first stable window, and \(N_0\) records the position of that window.
The tabloid comparison leaves a natural question: how should the stable-window pair
\((\lambda,N_0)\) be compared with the single AMBC weight \(\rho\)?

The following two theorems summarize the three main results of this paper.

\begin{theorem}\label{thm:intro-beta}
The dual affine correspondence admits a dual weight \(\beta\) that is independent of
the stable window.  The data \((\beta,N_0)\) and the original
stable-window data \((\lambda,N_0)\) determine each other.
\end{theorem}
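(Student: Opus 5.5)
The plan is to define \(\beta\) directly from the stable-window data and then check the two claims in turn: that \(\beta\) does not depend on which stable window is used, and that the map \((\lambda,N_0)\mapsto(\beta,N_0)\) is invertible. In the dual affine correspondence of \cite{HZ26}, the periodic growth diagram stabilizes along windows. Past the first stable window at position \(N_0\), the partition \(\lambda^{(N)}\) read in the window at position \(N\ge N_0\) should evolve in a controlled periodic way. I expect that each shift by one period adds a fixed vector determined by the shape \(\mu\), namely one box per row of the stream decomposition, i.e.\ the column lengths \(\mu'\). So I will define
\[
\beta := \lambda^{(N)} - (N-N_0)\,\mu^{*} - c(N_0),
\]
where \(\mu^*\) is the per-period increment and \(c(N_0)\) is a normalization. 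The natural normalization subtracts \(N_0\) copies of the increment, which gives \(\beta=\lambda^{(N)}-N\mu^*\). The parts of \(\lambda\) should be grouped according to the rows of the common shape \(\mu\) of \(\bar P,\bar Q\), so that \(\beta\) is a vector indexed like the AMBC weight \(\rho\).

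First, I will prove the periodicity lemma \(\lambda^{(N+1)}=\lambda^{(N)}+\mu^*\) for all \(N\ge N_0\). This should follow from the local growth rules: in a stable window, each colored shadow line (each stream) crosses exactly one fundamental strip per period. Consequently the partition on the boundary of the next window is obtained by the same local moves, translated by \(n\). Once this lemma holds, \(\beta\) computed from any stable window \(N\ge N_0\) is the same. This gives the first assertion, that \(\beta\) is independent of the stable window.

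Second, for mutual determination, the map \(\lambda\mapsto\beta=\lambda-N_0\mu^*\) is an affine translation with \(\mu\) fixed. It is therefore inverted by \(\lambda=\beta+N_0\mu^*\), and \(\mu\) is recovered from \(\bar P\) in both directions, so \((\beta,N_0)\) and \((\lambda,N_0)\) determine each other. The delicate point is that the grouping of the parts of \(\lambda\) into blocks indexed by the rows of \(\mu\) must be canonical. Parts of equal length could otherwise be permuted. I will handle this by ordering each block decreasingly, and by checking that the first-stable-window condition forces \(\lambda\) to be the sum of a partition shape and the increment with no ambiguity.

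The main obstacle is the periodicity lemma. I will attack it by induction on \(N\) using the shadow-line description. At stability, each shadow line of color \(i\) contributes a fixed number of boxes to row \(i\) per period. The risk is that lines of equal color can interleave, so the increment may be row-by-row only after sorting. If that happens, I will instead define \(\beta\) through the multiset of per-stream altitudes, which are manifestly translation-invariant, and derive \(\lambda\) from them.
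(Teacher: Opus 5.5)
Your plan has the same structure as the paper's proof. You show that the stable corner partitions drift linearly (Lemma~\ref{lem:stable-evolution}), subtract that drift to get a window-independent vector, and recover \(\lambda\) by rearranging while keeping \(N_0\). However, the one concrete claim you commit to is wrong: the per-period increment is \(\mu\), not the conjugate \(\mu'\).

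In a stable window, color \(i\) occupies exactly \(\mu_i\) boundary positions; these are the \(\mu_i\) entries of row \(i\) of \(\bar P\) and \(\bar Q\). Translating the window by one period adds one box to row \(i\) of the corner partition for each such occurrence, so \(\lambda[M+1]=\lambda[M]+\mu\). The tableau picture gives the same answer: row \(i\) of \(\lambda\) is the row into which \(\lambda\hookrightarrow\bar P\) places the \(\mu_i\) entries of color \(i\). By contrast, \(\mu'\) has \(\mu_1\) parts indexed by the columns of \(\mu\), not by the \(\ell(\mu)\) colors that index the rows of \(\lambda\).

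With \(\mu^*=\mu'\), your \(\beta=\lambda^{(N)}-N\mu^*\) depends on \(N\) unless \(\mu=\mu'\), so the periodicity lemma you propose to prove is false as stated. Your own heuristic, that each shadow line of color \(i\) feeds row \(i\), already gives the correct vector once you count \(\mu_i\) occurrences of color \(i\) per period, each contributing one box. With \(\mu^*=\mu\), the rest of your argument goes through verbatim.

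Two smaller remarks. First, the grouping ambiguity you anticipate does not arise. By condition (i) of the dual correspondence, \(\lambda\) has at most \(\ell(\mu)\) parts and its \(i\)-th row is the one carrying color \(i\). So it is already a vector in \(\Z^{\ell}\), and neither sorting nor a multiset-of-altitudes fallback is needed.

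Second, your normalization \(\lambda-N_0\mu\) equals \(-\beta-2\mu\), where the paper uses \(\beta=\mu(N_0-2)-\lambda\). Both satisfy the statement as written. The paper's sign and shift by \(-2\) are chosen so that \(\sum_i\beta_i=\ind(w)\) via \(|\lambda|=n(N_0-2)-\ind(w)\). They also make \(\beta\) increase under \(\Rop,\Lop\) just as \(\rho\) does. This is what makes \(\beta\) directly comparable with the AMBC weight in the later sections.
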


\begin{theorem}\label{thm:intro-comparison}
After identifying the corresponding recording tabloids, the dual weight \(\beta\) and
the AMBC weight \(\rho\) have the same monodromy.  Moreover, their difference depends
only on the associated tabloids; equivalently, there is a tabloid correction
\(\kappa(P,Q)\) such that
\[
                         \beta=\rho+\kappa(P,Q).
\]
\end{theorem}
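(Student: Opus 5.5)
This is the plan for Theorem~\ref{thm:intro-comparison}. It reduces the comparison of \(\beta\) and \(\rho\) to their behaviour under affine Knuth moves, and then integrates along paths.

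\textbf{Step 1: set up the common fibres.} By \cite{HZ26}, the insertion tabloids of \(\RSd\) and \(\AMBC\) coincide. By \cite{CFKLY22}, the recording tabloids differ by affine evacuation, so identifying \(\bar Q\) with the evacuated AMBC recording tabloid puts both correspondences on the same fibres. Fix a tabloid pair \((P,Q)\) of shape \(\mu\). The set of \(w\) with these tabloids is, on the AMBC side, a torsor for the dominant weights: by \cite{CPY18} it is parametrized by \(\rho\), and translations act by adding lattice vectors. By Theorem~\ref{thm:intro-beta}, \(\beta\) is a well-defined function on the same fibre, independent of the chosen stable window.

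\textbf{Step 2: local monodromy.} The key local statement I would prove is this. For an elementary affine Knuth move \(w\mapsto w'\) (a dual Knuth move that changes \(Q\) and fixes \(P\), or a Knuth move that changes \(P\) and fixes \(Q\)), the increments agree:
\[
\beta(w')-\beta(w)=\rho(w')-\rho(w).
\]

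On the AMBC side, the increment is read off from the stream picture: a Knuth move swaps two adjacent values, and \(\rho\) changes only when the swap crosses a window boundary. In that case it shifts one entry by \(\pm1\), according to whether a stream's altitude changes. On the dual side, I would use the periodic growth diagram. A Knuth move is a local change of two adjacent columns (or rows), so the shadow-line colouring is altered only near the move. The first stable window either is unchanged or moves by one period, which changes \(N_0\) and \(\lambda\) in a controlled way. Translating that change through the formula for \(\beta\) from Theorem~\ref{thm:intro-beta} should give exactly the same \(\pm e_i\) shift.

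This step is the main obstacle. The two constructions index rows of \(\mu\) differently: streams on one side, coloured shadow lines on the other. So I would first prove a dictionary lemma matching each stream to a colour class in the stable window. I would try to build it from the asymptotic-RS description of AMBC, where long words eventually reproduce the stable-window partition.

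\textbf{Step 3: integrate along paths.} Affine Knuth moves connect any two elements with the same two-sided cell data, and the moves can be composed. Hence along any affine Knuth path \(w_0\to\cdots\to w_k\), the total monodromy of \(\beta\) equals that of \(\rho\), which is the first claim of the theorem.

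\textbf{Step 4: define the correction and show it depends only on the tabloids.} Set
\[
\kappa(w):=\beta(w)-\rho(w).
\]
By Step 2, \(\kappa\) is constant along Knuth paths, and it is also invariant under translations within a fibre, since both weights shift by the same vector. I would prove translation invariance directly: multiplying \(w\) by a translation element moves every stream, and every stable window, uniformly. So \(\kappa\) descends to a function of \((P,Q)\).

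To see that nothing beyond \((P,Q)\) is needed, I would check that two elements with equal \((P,Q)\) differ by a translation within the fibre, because \(\rho\) together with \((P,Q)\) determines \(w\) by \cite{CPY18}. Combining this with invariance gives \(\beta=\rho+\kappa(P,Q)\).

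Finally, I would compute \(\kappa\) explicitly on a canonical representative of each fibre, namely the minimal-length element with given tabloids, where both weights can be read directly.
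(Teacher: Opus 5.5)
Your Step~2 is false as stated, and the rest of the plan depends on it. The two weights do not have equal increments on every affine Knuth edge.

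\begin{itemize}
\item On noncyclic edges both increments vanish.
\item On the cyclic edge exchanging \(\ol n\) and \(\ol1\), the dual increment is \(\e_{\row_{\E(Q)}(\ol n)}-\e_{\row_{\E(Q)}(\ol1)}\), read off the evacuated tabloid. The AMBC increment is \(\e_{\row_Q(\ol1)}-\e_{\row_Q(\ol n)}\).
\item The rotation edge \(w\mapsto\Rop w\) similarly gives \(\e_{\row_{\E(Q)}(\ol n)}\) versus \(\e_{\row_Q(\ol1)}\).
\end{itemize}

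What holds is only the gauge relation \(c_\beta(e)-c_\rho(e)=\kappa(P,Q')-\kappa(P,Q)\). If your Step~2 were true, \(\kappa(P,\cdot)\) would be constant on each tabloid Knuth component. For pairwise distinct parts there is one component (\(d_\mu=1\)), so the rotation increments would have to agree for every \(Q\). Combined with \(\E\sigma^{-1}=\sigma\E\), this would force \(\E\) to be the plain complementation \(\ol a\mapsto\ol{n+1-a}\), which affine evacuation is not. So your Step~4 conclusion that \(\kappa\) is constant along Knuth paths is wrong; the variation of \(\kappa\) along such paths is exactly what the statistics \(\xch_s\) measure. The logic also runs the other way from your plan: equal monodromy is a consequence of \(\kappa\) depending only on \((P,Q)\), not of an edge-by-edge identity.

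What the theorem actually needs is your ``translation invariance within a fibre,'' and there the argument is missing. The fibre over \((P,Q)\) is not a torsor. It is the set of integral \(\rho\) with \(\rho_{i+1}-\rho_i\ge\lch_i(P)-\lch_i(Q)\) whenever \(\mu_i=\mu_{i+1}\). Multiplying \(w\) by a translation element does not in general preserve \((P,Q)\). The only translation that always does is the central one \(w\mapsto w+n\), which moves both weights by \(\mu\) and so covers a single direction.

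The paper's key step is instead a packet-shift lemma. Let \(v_{s,a}\) be the indicator of a suffix of an equal-length block, and suppose \(\rho\) and \(\rho+v_{s,a}\) are both dominant. Then the two inverse-AMBC permutations have the same stable tabloids, and their late corner partitions differ by \(-v_{s,a}\). The reason is that raising the altitude of stream \(j\) translates exactly the color-\(j\) shadow-line family by one period, and the one-box discrepancy is carried along that family to the northeast corner. This is where your stream--color dictionary belongs: it is needed for altitude shifts, not for Knuth edges.

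A slack-adjusting argument then joins any two dominant weights by packet moves that stay dominant. Hence \(\beta-\rho\) is constant on the fibre, which defines \(\kappa(P,Q)\). Equal monodromy follows at once: a closed tabloid path returns to the same \((P,Q)\), hence to the same \(\kappa\), so both weights change by the same vector.

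Your Step~3 claim that Knuth moves connect all elements with the same cell data is also false in general, since there are \(d_\mu\) tabloid components and cells need not be single molecules. It is not needed, however.
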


The paper has six sections.  Section~\ref{sec:prelim} fixes the conventions for affine
permutations, tabloids, stable windows, AMBC, and affine evacuation.
Section~\ref{sec:beta} introduces \(\beta\) and proves its consistency with the original
\((\lambda,N_0)\)-data.  Section~\ref{sec:operations} studies natural transformations of
the dual correspondence.  Section~\ref{sec:comparison} establishes the monodromy and
comparison results for \(\beta\) and \(\rho\).  Section~\ref{sec:cells} records the
cell-theoretic consequences.

\section*{Acknowledgments}
We thank Daoji Huang for useful discussion about dual affine RS correspondence.

\section{Preliminaries}\label{sec:prelim}

\subsection{Extended affine permutations and windows}

Fix a positive integer \(n\).  An \emph{extended affine permutation} is a bijection
\(w:\Z\to\Z\) satisfying
\[
                         w(i+n)=w(i)+n
                         \qquad(i\in\Z).
\]
It is determined by its \emph{window} \([w(1),\ldots,w(n)]\).  Conversely, an
integer list \([a_1,\ldots,a_n]\) is the window of an extended affine permutation
if and only if the residues of the \(a_i\) form a complete residue system modulo
\(n\).  We write \(\Sn\) for the extended affine symmetric group, with multiplication
given by composition, and set
\begin{equation}\label{eq:index}
        \ind(w):=\frac1n\sum_{i=1}^n\bigl(w(i)-i\bigr)\in\Z.
\end{equation}
The integrality follows by comparing the two complete residue systems
\(w(1),\ldots,w(n)\) and \(1,\ldots,n\) modulo \(n\).  The index is a group
homomorphism \(\Sn\to\Z\).  Its zero fiber is the affine Weyl group, and we write
\(\Sn^{(k)}\) for the index-\(k\) fiber.  Thus the index records the central
translation component that is absent in the ordinary affine symmetric group.

For \(i\in\Z/n\Z\), let \(s_i\) be the affine simple reflection that exchanges every
pair of positions congruent to \(i\) and \(i+1\) modulo \(n\).  Thus right
multiplication by \(s_i\) exchanges the values in those positions, whereas left
multiplication exchanges the corresponding value residues \(\ol i\) and \(\ol{i+1}\).
This convention also
fixes the cyclic simple reflection \(s_0=s_n\), which crosses the chosen window boundary.

Let \(\tau(i)=i+1\).  The two elementary window transformations used below are
\begin{align}
 \Rop w&:=w\circ\tau
      =[w(2),\ldots,w(n),w(1)+n],
 \label{eq:R-def}\\
 \Lop w&:=\tau\circ w
      =[w(1)+1,\ldots,w(n)+1].
 \label{eq:L-def}
\end{align}
Right translation changes the chosen domain interval, while left translation shifts
all image values.  Directly from the windows,
\begin{equation}\label{eq:index-shifts-prelim}
             \ind(\Rop w)=\ind(\Lop w)=\ind(w)+1.
\end{equation}
The two operations commute and satisfy
\begin{equation}\label{eq:central-window}
                         \Rop^n w=\Lop^n w=w+n.
\end{equation}
Here \(w+n\) denotes the central translate \(i\mapsto w(i)+n\).  Although
\(\Rop^n\) and \(\Lop^n\) agree, their intermediate effects on insertion and
recording data are different; this is the reason for keeping both operations.

We also use the Dynkin reflection \(r\), characterized by
\(r(s_i)=s_{n-i}\), and the reflected inverse
\begin{equation}\label{eq:iota}
                         \iota(w):=r(w^{-1}).
\end{equation}
On the periodic permutation matrix, inversion is reflection in the main diagonal,
whereas \(\iota\) is reflection in the antidiagonal of a fundamental square.

\begin{example}[Index and the two window directions]\label{ex:prelim-window}
For \(n=4\), the list
\[
                         w=[10,3,-3,12]
\]
defines an extended affine permutation because its entries have residues
\(2,3,1,0\) modulo \(4\).  Its index is
\[
               \ind(w)=\frac{10+3-3+12-(1+2+3+4)}4=3.
\]
The two elementary transforms are
\[
 \Rop w=[3,-3,12,14],
 \qquad
 \Lop w=[11,4,-2,13],
\]
and both have index \(4\).  Four iterations of either transformation give
\([14,7,1,16]=w+4\), illustrating \eqref{eq:central-window}.
\end{example}

\subsection{Tabloids, rotation, and affine evacuation}

Let \(\mu=(\mu_1,\ldots,\mu_\ell)\vdash n\), with
\(\mu_1\ge\cdots\ge\mu_\ell>0\).  A \emph{tabloid} of shape \(\mu\) is an
equivalence class of fillings of the Young diagram of shape \(\mu\) with the elements
of \([\ol n]=\{\ol1,\ldots,\ol n\}\), where two fillings are equivalent if one is
obtained from the other by reordering entries within rows.  Here \(\ol a\) denotes the
residue class of \(a\) modulo \(n\).  We regard the \(i\)-th row as a set \(U_i\) and
display every tabloid in row-standard form.  Let \(\RSYT(\mu)\) denote the set of
tabloids of shape \(\mu\).  For an entry \(\ol a\), let \(\row_U(\ol a)\) be the row
containing it, and let \(\e_i\) be the \(i\)-th standard basis vector of \(\Z^\ell\).

Let \(\sigma U\) be obtained by replacing every entry \(\ol a\) with \(\ol{a+1}\).
This rotation preserves the shape and satisfies
\(\sigma^nU=U\).  Affine evacuation is denoted by \(\E\).  It is an involution and
intertwines the two rotation directions by \cite{CFKLY22}
\begin{equation}\label{eq:evac-rotation}
                         \E\sigma^{-1}=\sigma\E.
\end{equation}
Thus a rotation appearing on the AMBC recording side becomes the opposite rotation
after passing to the dual recording tabloid.

We use one distinguished base tabloid repeatedly.  If \(a\in\Z\), the
\emph{reverse row superstandard tabloid} \(\RRSS(\mu,\ol a)\) is obtained by placing
\(\ol a,\ol{a+1},\ldots,\ol{a+\mu_\ell-1}\) in the last row and then continuing with
consecutive residues from bottom to top.  In particular, \(\RRSS(\mu,\ol1)\) begins
with \(\ol1\) in its bottom row.  This is the normalization used for the base point
in the tabloid Knuth graph \cite[Section~2.1]{CLP18}.

We also need to turn a tabloid into a skew tableau.  Let \(\lambda\) be a partition
with at most \(\ell\) parts.  Whenever \(\lambda+\mu\) is a partition, define
\(\lambda\hookrightarrow U\) to be the row-increasing filling of
\((\lambda+\mu)/\lambda\) whose \(i\)-th row has content \(U_i\).  The row order is
forced, but the filling need not be increasing down columns.  Hence the assertion that
\(\lambda\hookrightarrow U\) is a skew standard Young tableau is a genuine
compatibility condition on \(\lambda\) and \(U\).

For later use, define
\begin{align}
 c_U(m)&:=\sum_{a=n-m+1}^{n}\e_{\row_U(\ol a)},
 &d_U(m)&:=\sum_{a=1}^{m}\e_{\row_U(\ol a)}
                         \qquad(0\le m\le n).
\label{eq:countvectors}
\end{align}
The \(i\)-th coordinate of \(c_U(m)\) counts the last \(m\) residues in row \(i\),
whereas that of \(d_U(m)\) counts the first \(m\) residues.  In particular,
\begin{equation}\label{eq:countvectors-total}
                         c_U(n)=d_U(n)=\mu.
\end{equation}
These vectors will be the elementary weight increments for window shifts.

\begin{example}[Rotation and boundary counts]\label{ex:prelim-tabloid}
For \(n=4\) and \(\mu=(2,1,1)\), let
\[
 U=\yttab{\ol1&\ol3\\ \ol2\\ \ol4}.
\]
Then
\[
 \sigma U=\yttab{\ol2&\ol4\\ \ol3\\ \ol1},
 \qquad
 d_U(2)=\e_1+\e_2,
 \qquad
 c_U(2)=\e_1+\e_3.
\]
The equality \(c_U(4)=d_U(4)=(2,1,1)\) records the full row content.
\end{example}

\subsection{AMBC, dominance, and affine Knuth moves}

Place a ball at every lattice point \((i,w(i))\).  The resulting matrix-ball diagram
is invariant under translation by \((n,n)\).  The affine matrix-ball construction
successively decomposes this periodic set into streams.  The stream densities form a
partition \(\mu\), their domain and image residues form two tabloids, and their relative
altitudes form an integral vector.  In this way AMBC gives a bijection
\cite[Theorems~5.1, 5.11, and~5.12]{CPY18}
\begin{equation}\label{eq:AMBC}
                         \AMBC(w)=(P,Q,\rho),
\end{equation}
where \(P,Q\) are tabloids of common shape \(\mu\) and \(\rho\in\Z^\ell\) is dominant
relative to \((P,Q)\).  We use the index normalization of
\cite[Theorem~10.3 and Lemma~10.6]{CPY18}:
\begin{equation}\label{eq:rho-index}
                         \sum_{i=1}^{\ell}\rho_i=\ind(w).
\end{equation}
Thus the weight records both the relative stream positions and the index stratum.
If \(x\in\Z^\ell\) is not dominant for the pair \((A,B)\), let
\(\Dom_{A,B}(x)\) denote the unique dominant representative in the corresponding
AMBC fiber.  Dominant representatives are necessary because permuting streams of equal
density changes their raw altitude list without changing the affine permutation.
The inversion rule is \cite[Proposition~3.1]{CLP18}
\begin{equation}\label{eq:AMBC-inverse}
 \AMBC(w^{-1})=
 \bigl(Q,P,\Dom_{Q,P}(-\rho)\bigr).
\end{equation}

Suppose two adjacent rows \(U_i,U_{i+1}\) have the same length.  Greedily match each
entry of the upper row to the least unused larger entry of the lower row, wrapping around
cyclically when necessary.  The number of wrapped pairs is the local charge
\(\lch_i(U)\).  The result is independent of the order in which the upper-row entries
are processed when they are taken increasingly.  The AMBC dominance inequalities are
\cite[Definition~5.9 and Theorem~5.10]{CLP18}
\begin{equation}\label{eq:AMBC-dominance}
 \rho_{i+1}-\rho_i\ge\lch_i(P)-\lch_i(Q)
 \qquad\text{whenever }\mu_i=\mu_{i+1}.
\end{equation}
There is no adjacent dominance inequality across a strict drop
\(\mu_i>\mu_{i+1}\).  Consequently, the dominance constraints are organized by the
maximal blocks of equal row length.

A right affine Knuth move at residue \(i\in\Z/n\Z\) interchanges the values in the
adjacent affine positions \(i\) and \(i+1\), together with all their periodic
translates, provided that either \(w(i-1)\) or \(w(i+2)\) lies strictly between
\(w(i)\) and \(w(i+1)\).  Equivalently, it is the admissible right multiplication
\(w\mapsto ws_i\).  On the AMBC recording tabloid it exchanges the entries
\(\ol i,\ol{i+1}\), while the insertion tabloid is unchanged.  A move with
\(i\not\equiv0\pmod n\) is \emph{noncyclic}; the move exchanging \(\ol n\)
and \(\ol1\) is \emph{cyclic}.  The cyclic distinction matters because crossing the
chosen window boundary may change the altitude weight.

A \emph{left affine Knuth move} is obtained by applying a right affine Knuth move to
\(w^{-1}\) and then inverting back.  Equivalently, it is an admissible left
multiplication \(w\mapsto s_iw\).  Thus inversion exchanges right and left affine
Knuth moves.

\begin{example}[A dominance inequality]\label{ex:prelim-dominance}
Let \(\mu=(2,2)\) and take
\[
 P=\yttab{\ol1&\ol4\\ \ol2&\ol3},
 \qquad
 Q=\yttab{\ol1&\ol2\\ \ol3&\ol4}.
\]
For \(P\), the entry \(\ol1\) matches \(\ol2\), while \(\ol4\) must wrap before
matching \(\ol3\); hence \(\lch_1(P)=1\).  No wrap is needed for \(Q\), so
\(\lch_1(Q)=0\).  The dominance condition for a compatible AMBC weight is therefore
\(\rho_2-\rho_1\ge1\).  This small example shows why
dominance depends on the tabloid pair rather than only on the shape.
\end{example}

Finally, the tabloid comparison theorem of Huang--Zhang gives
\cite[Theorem~5.10]{HZ26}
\begin{equation}\label{eq:tabloid-comparison}
                         \bar P=P,
                         \qquad \bar Q=\E(Q).
\end{equation}

\subsection{Dual affine RS correspondence}

We recall enough of the periodic growth-diagram construction to identify its four
outputs.  Start with the periodic permutation matrix of \(w\) and apply the dual local
growth rules.  The diagram is tiled by a bi-infinite sequence of \(n\)-by-\(n\)
standard windows \(\Omega_M\), indexed so that translating one period northeast sends
\(\Omega_M\) to \(\Omega_{M+1}\).  Each boundary edge carries both a partition label
and a shadow-line color.

A window is \emph{stable} if its east and west boundary color sequences agree and the
color multiplicities are weakly decreasing with the color.  The same agreement then
holds on the north and south boundaries.  Stability means that periodic shadow lines
have separated into fixed color families; the multiplicities of these families form a
partition \(\mu\vdash n\) \cite[Definition~3.10]{HZ26}.  Once one standard window is
stable, every later standard window is stable \cite[Lemma~4.5]{HZ26}.  We write \(N_0\)
for the least stable index and
\(\lambda[M]\) for the partition at the northeast corner of \(\Omega_M\).  In
particular, \(\lambda=\lambda[N_0]\).

Reading the color positions on the east and north boundaries of the first stable
window produces two tabloids \(\bar P\) and \(\bar Q\), respectively.  The dual affine
correspondence of Huang--Zhang is the bijection \cite[Theorem~3.16]{HZ26}
\begin{equation}\label{eq:DARS-original}
                  \RSd(w)=(\bar P,\bar Q,\lambda,N_0).
\end{equation}
The common shape of \(\bar P\) and \(\bar Q\) is the stable color-multiplicity
partition \(\mu\).  It is useful to spell out the content of ``compatible'' in this
statement.  The data satisfy the following conditions.
\begin{enumerate}[label=(\roman*),leftmargin=2.2em]
\item The partition \(\lambda\) has at most \(\ell(\mu)\) parts, and both
\(\lambda\hookrightarrow\bar P\) and
\(\lambda\hookrightarrow\bar Q\) are skew standard Young tableaux.
\item The pair is reduced: one cannot slide the same column of both skew tableaux up
by one box and retain two skew standard Young tableaux.
\item The colored shadow-line diagram reconstructed from the two skew tableaux is
saturated; in particular, a bump square never supports two shadow lines of different
colors.  Here a \emph{bump square} is a local tile in which two shadow lines meet
without crossing.
\item The predecessor \(\lambda-\mu\), when it is a partition, fails at least one of
the preceding conditions.  This is the condition that \(N_0\) is the \emph{first}
stable index rather than merely a stable index.
\end{enumerate}
The index \(N_0\) itself is an arbitrary integer, but it is tied to \(\lambda\) and the
index of \(w\) by the size formula
\cite[Theorem~3.16]{HZ26}
\begin{equation}\label{eq:dual-size-prelim}
                         |\lambda|=n(N_0-2)-\ind(w).
\end{equation}
This identity is the numerical bridge from the original stable-window data to the
normalized weight introduced in the next section.

\begin{example}[A dual affine output]\label{ex:prelim-dars}
For the affine permutation in Example~\ref{ex:prelim-window}, Huang--Zhang's growth
diagram gives
\[
 \bar P=\yttab{\ol1&\ol3\\ \ol2\\ \ol4},
 \qquad
 \bar Q=\yttab{\ol1&\ol2\\ \ol3\\ \ol4},
 \qquad
 \lambda=(6,6,5),
 \qquad
 N_0=7.
\]
The common tabloid shape is \(\mu=(2,1,1)\).  Since \(\ind(w)=3\), the size check is
\[
              |\lambda|=17=4(7-2)-3,
\]
as required by \eqref{eq:dual-size-prelim}.  For comparison, a finite permutation
embedded in \(\Sn^{(0)}\) has \(N_0=2\) and \(\lambda=\varnothing\); thus the
nonempty partition above measures genuinely affine stable-window displacement.
\end{example}

\section{The normalized dual weight \texorpdfstring{\(\beta\)}{beta}}\label{sec:beta}

This section isolates the linear drift in the stable partitions and proves that the new
normalization is completely consistent with the original \((\lambda,N_0)\)-data.

\begin{lemma}[Evolution of stable partitions]\label{lem:stable-evolution}
For every \(M\ge N_0\),
\begin{equation}\label{eq:stable-evolution}
                   \lambda[M+1]=\lambda[M]+\mu.
\end{equation}
Consequently,
\begin{equation}\label{eq:stable-translation}
                   \lambda[M]=\lambda+(M-N_0)\mu.
\end{equation}
\end{lemma}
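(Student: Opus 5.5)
The plan is to read \eqref{eq:stable-evolution} off the growth diagram inside one stable window. The second assertion then follows by induction on \(M-N_0\).

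Fix \(M\ge N_0\). Then \(\Omega_M\) is stable by \cite[Lemma~4.5]{HZ26}, and so is \(\Omega_{M+1}\). By periodicity, the northeast corner of \(\Omega_M\) is the southwest corner of \(\Omega_{M+1}\). Hence \(\lambda[M+1]\) is obtained from \(\lambda[M]\) by following the partition labels along the south (or west) boundary of \(\Omega_{M+1}\) and then along its east (or north) boundary. Along a boundary of length \(n\), each step either keeps the partition or adds one box. The shadow-line color on the edge records which row receives the box, and a color-\(0\) edge (if that convention is used) records no growth.

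\textbf{Step 1: counting boxes per row.} I will show that crossing the west boundary and then the north boundary of a stable window adds exactly \(\mu_i\) boxes to row \(i\). Stability gives two facts. The east and west color sequences agree. The color multiplicities along a boundary are exactly \(\mu_1\ge\mu_2\ge\cdots\), and color \(i\) corresponds to the \(i\)-th family of periodic shadow lines, which grows row \(i\). Each shadow line of color \(i\) crosses the fundamental window exactly once in the period. So in one full period the periodic family of color \(i\) deposits exactly as many boxes in row \(i\) as it has lines in the window, namely \(\mu_i\). Equivalently, \(|\lambda[M+1]|-|\lambda[M]|=n\), and this total is distributed according to the color multiplicities. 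As a cross-check I will use \eqref{eq:dual-size-prelim} applied to the central translate picture: shifting the stable index by one increases \(|\lambda|\) by \(n\), consistent with \(|\mu|=n\).

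\textbf{Step 2: the identification of colors with rows.} This is the main obstacle. I must justify that in a stable window a color-\(i\) edge adds its box to row \(i\) of the corner partition and never to another row. I would argue from the same structure that makes \(\lambda\hookrightarrow\bar P\) a skew standard tableau, condition (i). Reading the east boundary of \(\Omega_M\) from south to north, the boxes added are exactly the filling of \((\lambda[M]+\mu)/\lambda[M]\) with row contents \(\bar P_i\). That filling adds \(\mu_i\) boxes to row \(i\), and stability means the same reading applies at every later window. If the paper's setup instead identifies \(\lambda[M+1]\) with the outer shape of \(\lambda[M]\hookrightarrow\bar P\) directly, the lemma is immediate: that outer shape is \(\lambda[M]+\mu\) by definition. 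I would first try to cite this identification from \cite{HZ26}, where \(\bar P\) is read from the colors on the east boundary. Failing that, I would prove it by checking the dual local growth rules on each square: in a stable region, non-bump squares transport a color-\(i\) increment unchanged, so each added box stays in its color's row.

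\textbf{Step 3: induction.} Given \eqref{eq:stable-evolution}, induction on \(M-N_0\ge0\) gives \(\lambda[M]=\lambda[N_0]+(M-N_0)\mu=\lambda+(M-N_0)\mu\), which is \eqref{eq:stable-translation}.
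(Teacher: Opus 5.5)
\textbf{Verdict: same approach as the paper, but Step~1 double-counts as written.} Your strategy matches the paper's. Both \(\Omega_M\) and \(\Omega_{M+1}\) are stable. Each occurrence of color \(i\) contributes one box to row \(i\), and color \(i\) occurs \(\mu_i\) times. Induction then gives \eqref{eq:stable-translation}.

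The explicit count in Step~1 does not give \(\mu\) under your own conventions. In a stable window every edge of every side is colored. The east-side multiplicities form \(\mu\vdash n\), which is why \(\bar P\) is a tabloid containing all \(n\) residues. The west side carries the same color sequence, and the same holds for north and south. So there are no uncolored (color-\(0\)) edges to absorb any excess. You state that each colored step adds a box in the row of its color. Then walking up the west side of \(\Omega_{M+1}\) and along its north side adds \(\mu_i\) boxes to row \(i\) on each side, which is \(2\mu\) in total.

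Your remark that each color-\(i\) line crosses the window once does not repair this. A line meets the window boundary twice. Four sides with \(\mu_i\) color-\(i\) edges each therefore means \(2\mu_i\) color-\(i\) lines, not \(\mu_i\). What the argument needs is that exactly one side's worth of colored steps (\(n\) of them) separates the two corners. This is what the paper asserts by saying that translating the window one period adds one box per occurrence of a color, using east \(=\) west and north \(=\) south; it does not walk along two sides.

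Your Step~2 has the right idea: the \(n\) steps from \(\lambda[M]\) to \(\lambda[M+1]\) realize the skew tableau \(\lambda[M]\hookrightarrow\bar P\), whose outer shape is \(\lambda[M]+\mu\). Two corrections are needed.
\begin{itemize}
\item This identification is the whole content of the lemma, not a side check. Falling back on it makes the lemma immediate only because it assumes what is to be shown.
\item The index is off as you phrase it. The east side of \(\Omega_M\), read south to north, ends at \(\lambda[M]\). It therefore cannot be a filling of \((\lambda[M]+\mu)/\lambda[M]\).
\end{itemize}

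Drop the size-formula cross-check. \eqref{eq:dual-size-prelim} is stated only at the first stable index. For \(w+n\) it concerns a different permutation with its own first stable index. It yields \(|\lambda[M+1]|-|\lambda[M]|=n\) only if you already know the lemma.
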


\begin{proof}
Once a window is stable, its east and west color sequences agree, and the same is true
for the north and south sequences.  Translating the window by one period therefore adds
one box to the stable partition for each occurrence of a row color.  Color \(i\) occurs
\(\mu_i\) times, so the increment is the shape vector \(\mu\).  Iteration gives
\eqref{eq:stable-translation}.
\end{proof}

\begin{definition}[Normalized dual weight]\label{def:beta}
For \(w\in\Sn\) with original dual data
\(\RSd(w)=(\bar P,\bar Q,\lambda,N_0)\), define
\begin{equation}\label{eq:beta-first}
                         \beta(w):=\mu(N_0-2)-\lambda.
\end{equation}
More generally, if \(M\ge N_0\) is any stable-window index, set
\begin{equation}\label{eq:beta-M}
                         \beta[M]:=\mu(M-2)-\lambda[M].
\end{equation}
\end{definition}

\begin{theorem}[Consistency with \texorpdfstring{\((\lambda,N_0)\)}{(lambda,N0)}]\label{thm:beta-consistency}
The normalized weight has the following properties.
\begin{enumerate}[label=(\roman*),leftmargin=2.2em]
\item \(\beta[M]=\beta(w)\) for every stable window \(M\ge N_0\).
\item Replacing the original pair \((\lambda,N_0)\) by \((\beta,N_0)\) loses no
information: the inverse formula is
\begin{equation}\label{eq:lambda-from-beta}
                         \lambda=\mu(N_0-2)-\beta.
\end{equation}
\item For any later stable window, the original stable partition is recovered by
\begin{equation}\label{eq:lambdaM-from-beta}
                         \lambda[M]=\mu(M-2)-\beta.
\end{equation}
Thus the data \((\bar P,\bar Q,\lambda,N_0)\) and
\((\bar P,\bar Q,\beta,N_0)\) are equivalent descriptions of the same dual affine
correspondence.
\end{enumerate}
\end{theorem}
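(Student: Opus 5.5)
The argument is a direct computation from Lemma~\ref{lem:stable-evolution} and Definition~\ref{def:beta}; no further structural input about the growth diagram is needed.

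For part~(i), fix a stable index \(M\ge N_0\) and substitute the closed form \eqref{eq:stable-translation}, \(\lambda[M]=\lambda+(M-N_0)\mu\), into the definition \eqref{eq:beta-M}. This gives
\[
\beta[M]=\mu(M-2)-\lambda-(M-N_0)\mu=\mu(N_0-2)-\lambda=\beta(w).
\]
The linear drift \(M\mu\) cancels exactly, and this cancellation is the whole content of (i). I would also note that \(\beta[N_0]=\beta(w)\) holds trivially because \(\lambda[N_0]=\lambda\), which is a consistency check for the definitions.

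For part~(ii), rearrange \eqref{eq:beta-first} to obtain \eqref{eq:lambda-from-beta}. Here one should observe that \(\mu\) is read off from the common shape of \(\bar P\) (equivalently \(\bar Q\)). Hence, given \((\bar P,\bar Q,\beta,N_0)\), the partition \(\lambda\) is determined, and conversely \((\bar P,\bar Q,\lambda,N_0)\) determines \(\beta\). Part~(iii) then follows by combining (i) with \eqref{eq:beta-M}: solving \(\beta=\beta[M]=\mu(M-2)-\lambda[M]\) for \(\lambda[M]\) gives \eqref{eq:lambdaM-from-beta}.

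The final equivalence statement is now formal. The maps \((\bar P,\bar Q,\lambda,N_0)\mapsto(\bar P,\bar Q,\beta,N_0)\) and its inverse via \eqref{eq:lambda-from-beta} are mutually inverse bijections between the image of \(\RSd\) and its transformed image. Composing with the bijection \eqref{eq:DARS-original} therefore yields a bijective description of \(\widetilde S_n\).

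The only step requiring any care is dimensional bookkeeping. The vectors \(\lambda\) and \(\lambda[M]\) must be treated as vectors in \(\Z^{\ell}\), padded with zeros, where \(\ell=\ell(\mu)\); this is legitimate because condition~(i) of the compatibility list guarantees that \(\lambda\) has at most \(\ell\) parts. Once that is settled, all identities above are coordinatewise identities in \(\Z^\ell\). As an optional sanity check, I would record that summing coordinates and using \eqref{eq:dual-size-prelim} gives \(\sum_i\beta_i=n(N_0-2)-|\lambda|=\ind(w)\), which matches the normalization \eqref{eq:rho-index}.
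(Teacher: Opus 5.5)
Your proposal is correct and matches the paper's proof: substituting \eqref{eq:stable-translation} into \eqref{eq:beta-M} makes the drift cancel and gives (i), while (ii) and (iii) are rearrangements of \eqref{eq:beta-first} and of \eqref{eq:beta-M} combined with (i). Your remarks on padding \(\lambda\) to \(\Z^\ell\) and on the index check, which is the paper's Proposition~\ref{prop:beta-index}, are harmless additions.
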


\begin{proof}
By Lemma~\ref{lem:stable-evolution},
\[
 \beta[M]
 =\mu(M-2)-\lambda-(M-N_0)\mu
 =\mu(N_0-2)-\lambda
 =\beta(w).
\]
This proves (i).  Formula~\eqref{eq:lambda-from-beta} is the rearrangement of
\eqref{eq:beta-first}, proving (ii), and the same rearrangement of
\eqref{eq:beta-M} proves (iii).  Minimality of \(N_0\) is retained explicitly, so the
first stable window is not forgotten.
\end{proof}

\begin{proposition}[Index normalization]\label{prop:beta-index}
The coordinate sum of \(\beta\) is the index of the affine permutation:
\begin{equation}\label{eq:beta-index}
                         \sum_{i=1}^{\ell}\beta_i(w)=\ind(w).
\end{equation}
In particular, \(\beta\) and the AMBC weight \(\rho\) have the same coordinate sum.
\end{proposition}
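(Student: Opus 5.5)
The plan is to compute the coordinate sum of \(\beta\) directly from Definition~\ref{def:beta} and then substitute the size formula \eqref{eq:dual-size-prelim}. Write \(\ell=\ell(\mu)\). By condition (i) of the dual correspondence, \(\lambda\) has at most \(\ell\) parts. So we may regard \(\lambda\) as a vector in \(\Z^\ell\), padded with zeros, and the subtraction in \eqref{eq:beta-first} is coordinatewise in \(\Z^\ell\) with \(\sum_{i=1}^\ell\lambda_i=|\lambda|\). Likewise \(\sum_{i=1}^\ell\mu_i=n\), since \(\mu\vdash n\) is the common shape of \(\bar P,\bar Q\).

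Summing \eqref{eq:beta-first} coordinatewise gives
\[
 \sum_{i=1}^{\ell}\beta_i(w)=(N_0-2)\sum_{i=1}^{\ell}\mu_i-\sum_{i=1}^{\ell}\lambda_i
 =n(N_0-2)-|\lambda|.
\]
The size formula \eqref{eq:dual-size-prelim}, \(|\lambda|=n(N_0-2)-\ind(w)\), then yields \(\sum_i\beta_i(w)=\ind(w)\), which is \eqref{eq:beta-index}. As a consistency check, the same computation with \(\beta[M]\) and \(|\lambda[M]|=|\lambda|+(M-N_0)n\) from Lemma~\ref{lem:stable-evolution} gives the same sum, in line with Theorem~\ref{thm:beta-consistency}(i). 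The final assertion follows by comparing with the AMBC normalization \eqref{eq:rho-index}, \(\sum_i\rho_i=\ind(w)\). Both \(\beta\) and \(\rho\) live in \(\Z^\ell\) with the same \(\ell=\ell(\mu)\), because the shapes agree by \eqref{eq:tabloid-comparison}, \(\bar P=P\).

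There is no real obstacle here; the proposition is a bookkeeping consequence of the cited size formula. The only point needing care is the convention that \(\lambda\) is embedded in \(\Z^{\ell}\) rather than in a longer vector space. This is exactly what condition (i) guarantees, so no part of \(|\lambda|\) is lost when summing over \(i\le\ell\).
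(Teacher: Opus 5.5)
Your proposal is correct and matches the paper's proof: sum \eqref{eq:beta-first} using \(|\mu|=n\), substitute the size formula \eqref{eq:dual-size-prelim}, and compare with \eqref{eq:rho-index}. Your remark that condition (i) lets you regard \(\lambda\) as a vector in \(\Z^\ell\) spells out a convention the paper leaves implicit.
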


\begin{proof}
The first-stable-window size formula \eqref{eq:dual-size-prelim} gives
\(|\lambda|=n(N_0-2)-\ind(w)\).
Since \(|\mu|=n\), summing \eqref{eq:beta-first} gives
\(\sum_i\beta_i=n(N_0-2)-|\lambda|=\ind(w)\).  Compare with
\eqref{eq:rho-index}.
\end{proof}

\begin{remark}
The integer \(N_0\) remains useful because it records where stability first begins.
The role of \(\beta\) is different: it removes the predictable translation
\(\lambda[M+1]-\lambda[M]=\mu\) and provides a weight that can be compared directly
with \(\rho\).
\end{remark}

\section{Transformations of the dual affine correspondence}\label{sec:operations}

We now collect three natural operations: inversion, affine Knuth moves, and changes of
window.  Throughout, \(\beta\) is the normalized weight from
Definition~\ref{def:beta}.

\subsection{Inversion}

\begin{theorem}[Inversion of the tabloid data]\label{thm:inverse-tabloids}
Let \(\RSd(w)=(\bar P,\bar Q,\lambda,N_0)\).  Then
\begin{equation}\label{eq:inverse-tabloids}
 \bar P(w^{-1})=\E(\bar Q(w)),
 \qquad
 \bar Q(w^{-1})=\E(\bar P(w)).
\end{equation}
For the reflected inverse \(\iota(w)=r(w^{-1})\), one has the cleaner antidiagonal
symmetry
\begin{equation}\label{eq:reflected-inverse-tabloids}
 \bar P(\iota(w))=\bar Q(w),
 \qquad
 \bar Q(\iota(w))=\bar P(w).
\end{equation}
\end{theorem}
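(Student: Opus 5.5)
The plan is to derive both identities from the tabloid comparison \eqref{eq:tabloid-comparison}, the AMBC inversion rule \eqref{eq:AMBC-inverse}, and the involutivity of \(\E\). No new analysis of the growth diagram is needed.

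\textbf{Step 1: the identities for \(w^{-1}\).} Write \(\AMBC(w)=(P,Q,\rho)\). By \eqref{eq:AMBC-inverse}, the insertion tabloid of \(w^{-1}\) is \(Q\) and its recording tabloid is \(P\). Applying \eqref{eq:tabloid-comparison} to \(w^{-1}\) gives
\[
\bar P(w^{-1})=P(w^{-1})=Q,
\qquad
\bar Q(w^{-1})=\E\bigl(Q(w^{-1})\bigr)=\E(P).
\]
Applying \eqref{eq:tabloid-comparison} to \(w\) gives \(\bar P(w)=P\) and \(\bar Q(w)=\E(Q)\). Since \(\E\) is an involution, \(Q=\E(\bar Q(w))\). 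Substituting yields
\[
\bar P(w^{-1})=\E(\bar Q(w)),
\qquad
\bar Q(w^{-1})=\E(\bar P(w)),
\]
which is \eqref{eq:inverse-tabloids}. This step is purely formal.

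\textbf{Step 2: the effect of the Dynkin reflection.} To pass from \(w^{-1}\) to \(\iota(w)=r(w^{-1})\), I need a lemma saying how \(r\) acts on the tabloids. On windows, \(r\) is conjugation by the order-reversing map \(i\mapsto n+1-i\) (up to the shift fixing the window), so it replaces every residue \(\ol a\) by \(\ol{1-a}\) on both the domain and image sides.

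\begin{itemize}
\item On the dual side, the growth diagram of \(r(v)\) is the \(180^\circ\) rotation of that of \(v\). I expect this to interchange east with west and north with south. Because the east and west color sequences agree in a stable window, the tabloids should transform by the residue reversal \(\ol a\mapsto\ol{1-a}\), possibly composed with \(\E\).
\item I plan to pin the exact form down through the AMBC side. Applying \(r\) rotates the ball diagram by \(180^\circ\), so streams map to streams and both AMBC tabloids get reversed residues.
\item I then combine this with the known identity that \(\E\) composed with residue reversal relates to reversal, in the form given in \cite{CFKLY22}; compare \eqref{eq:evac-rotation}, which is the infinitesimal version.
\end{itemize}

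\textbf{Step 3: assembling the reflected-inverse identities.} The target \eqref{eq:reflected-inverse-tabloids} amounts to the claim that on the dual side \(r\) acts by \(\bar P\mapsto\E(\bar P)\) and \(\bar Q\mapsto\E(\bar Q)\). Granting this, Step~1 gives
\[
\bar P(\iota w)=\E\E\bar Q(w)=\bar Q(w),
\qquad
\bar Q(\iota w)=\bar P(w).
\]

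The claim can alternatively be checked geometrically. Reflection in the antidiagonal of a fundamental square maps the periodic growth diagram of \(w\) to that of \(\iota(w)\) and swaps the east and north boundaries of stable windows. Stable windows go to stable windows because the stability condition is symmetric. Colors are preserved because the dual local rules are symmetric under the antidiagonal transpose. Reading east and north of \(\iota(w)\) then directly gives \(\bar Q(w)\) and \(\bar P(w)\), without any residue relabeling. This is the argument I would try first. The residue labels along the antidiagonal reflection run in the matching direction, so there should be no relabeling, which is exactly why the antidiagonal symmetry is ``cleaner.''

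\textbf{Main obstacle.} The hard step is verifying that the dual growth rules and the intrinsic coloring are invariant under the antidiagonal reflection, including the window-indexing convention. If the coloring is only invariant up to reversing color order within equal multiplicities, the tabloids are still unchanged as row sets. If the geometric check fails, I will fall back on the Step~2 route through \(\E\)-equivariance.
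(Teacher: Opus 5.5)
Your Step~1 is exactly the paper's proof of \eqref{eq:inverse-tabloids}. The gap is in the reflected-inverse half. Step~3 correctly reduces \eqref{eq:reflected-inverse-tabloids}, via Step~1 and \eqref{eq:tabloid-comparison}, to the claim that $r$ acts by $\E$ on both tabloids. Nothing in the proposal proves that claim, and it cannot follow from \eqref{eq:AMBC-inverse}, \eqref{eq:tabloid-comparison} and $\E^2=1$, since these concern only inversion. The paper does not derive it either. It imports it as the antidiagonal-reflection symmetry of the dual growth diagram, \cite[Proposition~5.7]{HZ26}. Without that input, or an equivalent one, the second identity remains unproved.

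Neither of your sketches supplies it.

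\textbf{The geometric check.} The reflection $(i,w(i))\mapsto(n+1-w(i),\,n+1-i)$ reverses the period direction. It sends $\Omega_M$ to a window of index $c-M$ for a fixed $c$. It carries the outgoing east and north sides to the incoming south and west sides; interchanging east and north is what the main-diagonal reflection, i.e.\ inversion, does. It also reverses the order of the boundary residues. The corner partitions grow by $\mu$ per period toward the northeast (Lemma~\ref{lem:stable-evolution}), and the stable indices form the up-set $M\ge N_0$. Hence the literal reflection of the diagram of $w$ is not the dual growth diagram of $\iota(w)$: its labels shrink toward the northeast and its stable windows form a down-set. So symmetry of the local rules and of the stability condition does not by itself give the result. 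Nor does it explain why no residue relabeling occurs.

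\textbf{The fallback.} It rests on a false claim: $r$ does not act on AMBC tabloids by $\ol a\mapsto\ol{1-a}$. Take $v=[3,4,1,2]\in S_4\subset\widetilde S_4$, so $r(v)=w_0vw_0=v$. Both AMBC tabloids of $v$ are the Robinson--Schensted row tabloid with rows $\{\ol1,\ol2\},\{\ol3,\ol4\}$. Residue reversal turns this into the tabloid with the two rows interchanged. In general $P(w_0vw_0)=\operatorname{evac}P(v)$ is not a relabeling. Similarly, reversing colors of equal multiplicity does change a tabloid, because rows are indexed.

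\textbf{What would close the gap.} On the AMBC side you need that $r$ acts on both AMBC tabloids by $\E$. Granting that, Step~1 gives $\bar P(\iota w)=\E(Q(w))=\bar Q(w)$ and $\bar Q(\iota w)=\E\E(P(w))=\bar P(w)$ immediately. Otherwise, cite the growth-diagram symmetry, as the paper does.
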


\begin{proof}
Write \(\AMBC(w)=(P,Q,\rho)\).  By \eqref{eq:AMBC-inverse}, the two AMBC tabloids for
\(w^{-1}\) are \((Q,P)\).  The comparison \eqref{eq:tabloid-comparison} and the
involutivity of \(\E\) give
\[
 \bar P(w^{-1})=Q=\E(\bar Q(w)),
 \qquad
 \bar Q(w^{-1})=\E(P)=\E(\bar P(w)).
\]
The second statement is the antidiagonal-reflection symmetry of the dual growth diagram
\cite[Proposition~5.7]{HZ26}; applying it twice gives both identities in
\eqref{eq:reflected-inverse-tabloids}.
\end{proof}

Unlike the tabloid part, the inverse weight is not obtained by a bare sign: the vector
\(-\rho\) must first be moved to the dominant representative for the reversed tabloid
pair.  The exact formula for \(\beta(w^{-1})\) is Corollary~\ref{cor:inverse-beta}, after
the correction potential has been constructed.

\begin{lemma}[Stable cut crossing]\label{lem:stable-cut}
Let \(M\) be late enough that the standard windows used for \(w\), \(\Rop w\), and
\(\Lop w\) are stable.  Moving the domain cut through one period leaves the east
boundary tabloid fixed, replaces the north boundary tabloid \(\bar Q\) by
\(\sigma\bar Q\), and changes the northeast-corner partition by
\[
 \lambda_{\Rop w}[M]=\lambda_w[M]-\e_{\row_{\bar Q}(\ol n)}.
\]
Moving the image cut through one period gives, symmetrically,
\[
 \bar P\longmapsto\sigma\bar P,\qquad
 \bar Q\longmapsto\bar Q,\qquad
 \lambda_{\Lop w}[M]=\lambda_w[M]-\e_{\row_{\bar P}(\ol n)}.
\]
Consequently,
\[
 \beta(\Rop w)-\beta(w)=\e_{\row_{\bar Q}(\ol n)},\qquad
 \beta(\Lop w)-\beta(w)=\e_{\row_{\bar P}(\ol n)}.
\]
\end{lemma}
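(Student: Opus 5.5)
We argue directly in the periodic growth diagram, comparing the diagram of \(w\) with those of \(\Rop w\) and \(\Lop w\). The matrix-ball diagram of \(\Rop w=w\circ\tau\) has a ball at \((i,w(i+1))\). So it is the diagram of \(w\) translated one unit west, which is the same as moving the domain cut of the standard windows one unit east. Likewise \(\Lop w\) translates the picture one unit south, moving the image cut one unit north. The dual local growth rules are translation invariant. Hence the labelled, colored growth diagram of \(\Rop w\) is the translate of that of \(w\), and only the placement of the standard windows \(\Omega_M\) relative to the data changes. We choose \(M\) large enough that the relevant windows for \(w\), \(\Rop w\), \(\Lop w\) are all stable; this is possible by \cite[Lemma~4.5]{HZ26}, since the shifted windows lie inside later stable regions.

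For \(\Rop w\), the window \(\Omega_M(\Rop w)\) corresponds to the region of \(w\)'s diagram whose west and east edges are shifted one column east. Its northeast corner partition is therefore the label one step east along the north boundary of \(\Omega_M(w)\), read from the periodic extension of that boundary. Stability gives the following.

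(a) The east boundary color sequence is the same in every stable window, since east and west sequences agree and translation by a column does not change rows. So \(\bar P\) is fixed.

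(b) The north boundary word loses its first column and gains a periodic copy of it at the end. Relabelling positions by \(\ol a\mapsto\ol{a+1}\) then gives \(\sigma\bar Q\), up to fixing the residue convention. We will check this convention against Example~\ref{ex:prelim-tabloid} and the definitions of \eqref{eq:R-def} and \(\sigma\).

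(c) Along a stable north boundary, each unit step changes the partition label by exactly one box in the row equal to the color of the shadow line crossing that edge. Comparing the two northeast corners differs by the edge at residue \(\ol n\), whose color is \(\row_{\bar Q}(\ol n)\). The direction of the move (removing rather than adding) follows from Lemma~\ref{lem:stable-evolution}: moving a full period northeast adds \(\mu\), so moving the cut one step in the \(\Rop\) direction while staying in window \(M\) removes the box of the crossing color. The \(\Lop\) case is identical, with the east boundary and \(\bar P\) playing the role of the north boundary and \(\bar Q\). One can also reduce it to the \(\Rop\) case via the antidiagonal symmetry \eqref{eq:reflected-inverse-tabloids}, since \(\iota\) interchanges the two cuts.

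The \(\beta\) formulas then follow from \eqref{eq:beta-M} and Theorem~\ref{thm:beta-consistency}(i):
\[
\beta(\Rop w)-\beta(w)=\beta_{\Rop w}[M]-\beta_w[M]=\lambda_w[M]-\lambda_{\Rop w}[M]=\e_{\row_{\bar Q}(\ol n)}.
\]
As a check, summing coordinates gives \(+1\), which agrees with \eqref{eq:index-shifts-prelim} and Proposition~\ref{prop:beta-index}.

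The main obstacle is step (c) together with the index bookkeeping. We must confirm that "window \(M\)" for \(\Rop w\) really is the one-column shift of window \(M\) for \(w\), and not a shift combined with a change of \(M\). We must also confirm that the boundary edge crossed is the one at residue \(\ol n\), not \(\ol 1\). We would settle both by the index check above and by verifying against Example~\ref{ex:prelim-dars} with \(n=4\).
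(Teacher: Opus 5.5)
Your proposal has a genuine gap. The step that fails is the premise that, because the local rules are translation invariant, the \emph{labelled} growth diagram of \(\Rop w\) is the translate of that of \(w\). This can hold for the balls and, in the stable region, for the colors. It cannot hold for the partition labels, which carry an index-dependent normalization tied to the fixed absolute windows.

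Indeed, \eqref{eq:dual-size-prelim} and \eqref{eq:stable-translation} give \(|\lambda_w[M]|=n(M-2)-\ind(w)\) for every stable \(M\). With \eqref{eq:index-shifts-prelim} this forces \(|\lambda_{\Rop w}[M]|=|\lambda_w[M]|-1\). In your picture, however, \(\lambda_{\Rop w}[M]\) is the label of \(w\)'s diagram one step east of the northeast corner of \(\Omega_M(w)\). Labels in a growth diagram grow weakly toward the north and east, consistent with Lemma~\ref{lem:stable-evolution}. So your picture yields an added box or no change, never a removed box. Your appeal to Lemma~\ref{lem:stable-evolution} for ``removing rather than adding'' is therefore a non sequitur: under your own premise it points the other way.

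The rest of the lemma's content, namely \(\sigma\) versus \(\sigma^{-1}\) and residue \(\ol n\) versus \(\ol1\), you defer to convention checks against examples. The coordinate-sum check cannot see which row changes, so the row \(\row_{\bar Q}(\ol n)\) is also left unproved. Similarly, reducing the \(\Lop\) case via \(\iota\) would need the antidiagonal symmetry for corner partitions and window indices, not just the tabloid identity \eqref{eq:reflected-inverse-tabloids}.

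The missing idea is to keep the absolute standard windows fixed and compare the two diagrams tile by tile, rather than transporting labels. In a common stable region above all marked tiles, shifting the vertical cut by one column identifies every tile of the two diagrams except one periodic strip. This is the strip through which the representative with north-boundary residue \(\ol n\) passes. On identified edges the colors agree and the residue labels shift by one, which gives \(\sigma\bar Q\) and a fixed \(\bar P\).

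Along the exceptional strip, apply the dual local rule from its southwest end to its northeast end. Cross tiles carry labels unchanged. At bump tiles, stability forces the two incident lines to share a color. Hence a one-box deficit propagates along the unique stable shadow line crossing the cut without changing color. It reaches the northeast corner as exactly one box removed from row \(\row_{\bar Q}(\ol n)\). This one computation fixes the sign, the row, and the rotation direction together. The \(\Lop\) case is the transposed strip computation. After that, your derivation of the \(\beta\) increments from \eqref{eq:beta-M} goes through as written.
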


\begin{proof}
Choose a common stable region above all marked tiles.  Translating the vertical cut by
one column identifies every tile of the two diagrams except the single periodic strip
through which the representative with north-boundary residue \(\ol n\) passes.  On the
identified boundaries the colors are unchanged, while their residue labels increase by
one; hence the north tabloid becomes \(\sigma\bar Q\) and the east tabloid is fixed.

It remains to compare the corner partitions.  Along the exceptional strip, apply the
dual local rule successively from its southwest end to its northeast end.  At a cross
tile the two labels are merely carried to the opposite edges.  At a bump tile stability
forces the two incident shadow lines to have the same color, so decreasing the incoming
label by one decreases the outgoing label of that same color by one and does not affect
any other color.  Thus the discrepancy propagates along the unique stable shadow line
that crosses the cut.  Its color is \(r=\row_{\bar Q}(\ol n)\), and at the northeast
corner exactly one box is removed from row \(r\).  This proves the first partition
identity.  The horizontal statement is the transpose of the same strip calculation.
Finally use \(\beta[M]=\mu(M-2)-\lambda[M]\) in the common stable window.
\end{proof}

\subsection{Affine Knuth moves}

\begin{theorem}[Dual affine Knuth rule]\label{thm:dual-knuth}
Suppose \(w\) and \(w'\) differ by a right affine Knuth move.  Orient the induced tabloid
move from \(\bar Q=\bar Q(w)\) to \(\bar Q'=\bar Q(w')\).  Then
\[
                         \bar P(w')=\bar P(w),
\]
and \(\bar Q'\) is obtained from \(\bar Q\) by the induced affine Knuth move.  If the
exchanged entries are \(\ol i,\ol{i+1}\) with \(i\ne n\), then
\begin{equation}\label{eq:beta-noncyclic-knuth}
                         \beta(w')=\beta(w).
\end{equation}
If the move exchanges \(\ol n\) and \(\ol1\), then
\begin{equation}\label{eq:beta-cyclic-knuth}
 \beta(w')-\beta(w)
 =\e_{\row_{\bar Q}(\ol n)}-\e_{\row_{\bar Q}(\ol1)}.
\end{equation}
\end{theorem}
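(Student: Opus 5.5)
We argue through the tabloid comparison \eqref{eq:tabloid-comparison} and the window-shift Lemma~\ref{lem:stable-cut}, reducing the cyclic case to a noncyclic one by conjugating with \(\Rop\).

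\textbf{Tabloids.} A right affine Knuth move is \(w'=ws_i\), which by the AMBC description fixes \(P\) and exchanges \(\ol i,\ol{i+1}\) in \(Q\). By \eqref{eq:tabloid-comparison}, \(\bar P(w')=P(w')=P(w)=\bar P(w)\). For the recording side, \(\bar Q'=\E(Q(w'))\) is the image under \(\E\) of the Knuth-moved \(Q\). We define the ``induced affine Knuth move'' on \(\bar Q\) as this conjugate. Checking that it is again a tabloid Knuth move, with the residues relabelled via \eqref{eq:evac-rotation}, is the only tabloid content. Alternatively, it follows directly from the growth diagram: right multiplication by \(s_i\) swaps two adjacent columns of the periodic matrix, and this only affects the north boundary labels in residues \(\ol i,\ol{i+1}\) of a stable window.

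\textbf{Noncyclic weight invariance.} For \(i\ne n\) we work in the growth diagram. The two periodic permutation matrices differ only in the columns \(i,i+1\) (mod \(n\)) of each period. We choose \(M\) so large that \(\Omega_M\) is stable for both \(w\) and \(w'\). The exchanged columns lie strictly inside the fundamental domain cut, since \(i\ne n\). The Knuth condition (a witness \(w(i-1)\) or \(w(i+2)\) between the two values) is exactly the condition, familiar from classical growth diagrams, under which the local rule applied to the \(2\)-column strip yields the same partition labels on its far boundary; only the interior label and the north colors at \(\ol i,\ol{i+1}\) change. Hence \(\lambda_{w'}[M]=\lambda_w[M]\) at the northeast corner, and \(\mu\) is unchanged. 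By Theorem~\ref{thm:beta-consistency}(i), \(\beta(w')=\beta[M]=\beta(w)\).

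\textbf{Cyclic case by conjugation.} Suppose \(w'=ws_n\). Set \(v=\Rop^{-1}w\). Since \(\Rop w=w\tau\), we have \(ws_n=\Rop(v)\,\tau^{-1}s_n\tau\). Moreover \(\tau^{-1}s_n\tau=s_{n-1}\) in the convention where \(\tau\) shifts positions, possibly \(s_1\) depending on the orientation. Thus \(w'=\Rop(v')\), where \(v'=vs_{n-1}\) is a noncyclic Knuth move; admissibility transports because \(\tau\) preserves the betweenness condition. Now compute:
\[
\beta(w')-\beta(w)=\bigl[\beta(\Rop v')-\beta(v')\bigr]+\bigl[\beta(v')-\beta(v)\bigr]-\bigl[\beta(\Rop v)-\beta(v)\bigr].
\]
The middle bracket vanishes by the noncyclic case. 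The outer brackets, by Lemma~\ref{lem:stable-cut}, are \(\e_{\row_{\bar Q(v')}(\ol n)}\) and \(\e_{\row_{\bar Q(v)}(\ol n)}\). Since \(\bar Q(w)=\sigma\bar Q(v)\), we get \(\row_{\bar Q(v)}(\ol n)=\row_{\bar Q(w)}(\ol1)\). Also \(\bar Q(v')\) is \(\bar Q(v)\) with \(\ol{n-1},\ol n\) swapped, so \(\row_{\bar Q(v')}(\ol n)=\row_{\bar Q(v)}(\ol{n-1})=\row_{\bar Q(w)}(\ol n)\). This yields \eqref{eq:beta-cyclic-knuth}.

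\textbf{Main obstacle.} The delicate point is the index bookkeeping in the conjugation: determining which \(s_j\) is conjugate to \(s_n\) under \(\tau\), and whether \(\sigma\) or \(\sigma^{-1}\) relates \(\bar Q(v)\) to \(\bar Q(w)\). We would first test these on Example~\ref{ex:prelim-dars} to fix the signs before running the computation above.
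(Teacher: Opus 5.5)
Your route is the paper's. The noncyclic case uses the local dual Knuth diamond in a stable window. Your choice of a common stable \(M\) for \(w\) and \(w'\), combined with Theorem~\ref{thm:beta-consistency}(i), is if anything slightly more careful than the paper. The cyclic case conjugates by a window shift and reads the correction off Lemma~\ref{lem:stable-cut}. The paper shifts forward, using \(\Rop w'=(\Rop w)s_{n-1}\), so a single rearrangement of \(\beta(w')+\e_{\row_{\bar Q'}(\ol n)}=\beta(w)+\e_{\row_{\bar Q}(\ol n)}\) replaces your three-term telescoping. Deriving \(\bar P(w')=\bar P(w)\) from AMBC and \eqref{eq:tabloid-comparison}, instead of from the east boundary, is a harmless variant. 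You leave the \(\E\)-conjugation description of \(\bar Q'\) unchecked; the growth-diagram route you also mention is the one that actually carries the argument.

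The bookkeeping you flagged is wrong as written, in two places whose errors happen to cancel.

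First, for \(v=\Rop^{-1}w=w\tau^{-1}\) the relevant conjugate is \(\tau s_n\tau^{-1}=s_1\), so \(v'=vs_1\). In windows, \(v=[w(n)-n,w(1),\ldots,w(n-1)]\) and \(v'=[w(1),w(n)-n,w(2),\ldots,w(n-1)]\). The identity \(\tau^{-1}s_n\tau=s_{n-1}\) is true, but it belongs to the forward shift. Your displayed identity \(ws_n=\Rop(v)\,\tau^{-1}s_n\tau\) would say \(ws_n=ws_{n-1}\). The move \(v\to v'\) is still admissible and noncyclic: since \(v(j)=w(j-1)\), its witness condition at positions \(1,2\) is literally the one for \(w\) at positions \(0,1\).

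Second, the heuristic that right multiplication by \(s_j\) exchanges the north labels \(\ol j,\ol{j+1}\) is incompatible with the rule \(\bar Q(\Rop u)=\sigma\bar Q(u)\) that you use. Labels attached to columns that way would give \(\sigma^{-1}\). Combined with the correct \(v'=vs_1\), this heuristic would predict the swap \(\ol1,\ol2\). That gives \(\row_{\bar Q(v')}(\ol n)=\row_{\bar Q(v)}(\ol n)\) and the false conclusion \(\beta(w')=\beta(w)\).

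The fact you actually need is that \(\bar Q(v')\) is \(\bar Q(v)\) with \(\ol{n-1},\ol n\) exchanged. Derive it from \(\bar Q(v)=\sigma^{-1}\bar Q(w)\), \(\bar Q(v')=\sigma^{-1}\bar Q(w')\), and the cyclic exchange \(\ol n\leftrightarrow\ol1\) relating \(\bar Q(w)\) and \(\bar Q(w')\). With that justification your computation goes through and gives \eqref{eq:beta-cyclic-knuth}.
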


\begin{proof}
The local dual growth rules are the ordinary dual Knuth rules in every finite region
\cite{Fom95}.  For a noncyclic move choose the fundamental cut away from the two
interchanged positions.  The three possible relative orders in the affected rank-two
interval give the usual dual Knuth diamond: the east boundary and the northeast-corner
partition are unchanged, while the two north-boundary residues are interchanged.  This
proves \eqref{eq:beta-noncyclic-knuth} and fixes \(\bar P\).

Suppose now that the move interchanges \(\ol n\) and \(\ol1\).  After applying \(\Rop\)
to both endpoints, the same move is noncyclic, and hence
\(\beta(\Rop w')=\beta(\Rop w)\).  Lemma~\ref{lem:stable-cut} gives
\[
 \beta(w')+\e_{\row_{\bar Q'}(\ol n)}
 =\beta(w)+\e_{\row_{\bar Q}(\ol n)}.
\]
Since \(\bar Q'\) is obtained by interchanging \(\ol n\) and \(\ol1\), one has
\(\row_{\bar Q'}(\ol n)=\row_{\bar Q}(\ol1)\).  Rearranging proves
\eqref{eq:beta-cyclic-knuth}.
\end{proof}

For comparison, the AMBC rule \cite[Theorem~3.11]{CLP18} is
\begin{equation}\label{eq:rho-knuth}
 \rho(w')-\rho(w)=
 \begin{cases}
  0,&i\ne n,\\[1.5mm]
  \e_{\row_Q(\ol1)}-\e_{\row_Q(\ol n)},&i=n,
 \end{cases}
\end{equation}
where \(Q=Q(w)\).  Inversion turns the theorem into the corresponding left affine
Knuth rule: the recording tabloid is fixed and the insertion tabloid undergoes the
induced move.

\begin{example}[Noncyclic and cyclic Knuth edges]\label{ex:knuth}
Consider
\[
 w=[1,4,6,2,5,3].
\]
The AMBC calculation in \cite[Examples~3.12--3.13]{CLP18} gives
\[
 P=Q=\yttab{\ol1&\ol2&\ol3\\ \ol4&\ol5\\ \ol6},
 \qquad \rho=(0,0,0).
\]
The noncyclic Knuth move
\[
 w'=[1,4,2,6,5,3]
\]
fixes \(P\), replaces \(Q\) by
\[
 Q'=\yttab{\ol1&\ol2&\ol4\\ \ol3&\ol5\\ \ol6},
\]
and leaves \(\rho\) unchanged.
Correspondingly, \(\bar P\) is fixed, \(\bar Q=\E(Q)\) changes to \(\E(Q')\), and
\(\beta(w')=\beta(w)\).

The cyclic Knuth move gives
\[
 w''=[-3,4,6,2,5,7],
 \qquad
 Q''=\yttab{\ol2&\ol3&\ol6\\ \ol4&\ol5\\ \ol1}.
\]
Here \(\ol6\) is in the third row of \(Q\) and \(\ol1\) is in the first, so
\[
 \rho(w'')-\rho(w)=\e_1-\e_3=(1,0,-1).
\]
On the dual side the same edge has
\[
 \beta(w'')-\beta(w)
 =\e_{\row_{\E(Q)}(\ol6)}-\e_{\row_{\E(Q)}(\ol1)}.
\]
This displays the two local cocycles whose period comparison produces the
two-potential formula.
\end{example}

\subsection{Window shifts}

\begin{theorem}[Window transformations]\label{thm:one-step}
Let \(\RSd(w)=(\bar P,\bar Q,\lambda,N_0)\) and
\(\AMBC(w)=(P,Q,\rho)\).  Then
\begin{align}
 \bar P(\Rop w)&=\bar P,&
 \bar Q(\Rop w)&=\sigma\bar Q,&
 \beta(\Rop w)&=\beta+\e_{\row_{\bar Q}(\ol n)},
 \label{eq:R-dual}\\
 \bar P(\Lop w)&=\sigma\bar P,&
 \bar Q(\Lop w)&=\bar Q,&
 \beta(\Lop w)&=\beta+\e_{\row_{\bar P}(\ol n)}.
 \label{eq:L-dual}
\end{align}
For AMBC,
\begin{align}
 P(\Rop w)&=P,&
 Q(\Rop w)&=\sigma^{-1}Q,&
 \rho(\Rop w)&=\rho+\e_{\row_Q(\ol1)},
 \label{eq:R-ambc}\\
 P(\Lop w)&=\sigma P,&
 Q(\Lop w)&=Q,&
 \rho(\Lop w)&=\rho+\e_{\row_P(\ol n)}.
 \label{eq:L-ambc}
\end{align}
\end{theorem}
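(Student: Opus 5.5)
The dual half of the statement, \eqref{eq:R-dual} and \eqref{eq:L-dual}, is essentially a restatement of Lemma~\ref{lem:stable-cut}, with $\beta$ in place of the corner partitions. The plan is to check that the lemma's hypothesis is harmless and then import its conclusions. By Theorem~\ref{thm:beta-consistency}(i), $\beta$ can be computed in any stable window. Stability propagates to all later windows by \cite[Lemma~4.5]{HZ26}, so one can choose $M$ large enough that the standard windows of $w$, $\Rop w$ and $\Lop w$ are all stable. Lemma~\ref{lem:stable-cut} then gives three things. First, the east boundary is fixed and the north boundary becomes $\sigma\bar Q$ under $\Rop$. Second, symmetrically, the east boundary becomes $\sigma\bar P$ and the north boundary is fixed under $\Lop$. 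Third, it gives the stated $\beta$-increments. One point needs care: the tabloids of the lemma are read in window $M$, while $\bar P,\bar Q$ are defined at the first stable window $N_0$. The lemma's proof shows that in the stable regime, translation by a period leaves the colour sequences unchanged, so the boundary tabloids are the same for every $M\ge N_0$. I will state this explicitly.

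For the AMBC half, the $P$ and $Q$ components should follow from the stream description in \cite{CPY18}. Under $\Rop$ the balls $(i,w(i))$ become $(i-1,w(i))$, so every domain residue drops by one and $Q\mapsto\sigma^{-1}Q$ while $P$ is unchanged. Under $\Lop$ every image residue rises by one, so $P\mapsto\sigma P$ and $Q$ is unchanged. These are just the translation equivariance of the stream decomposition, because streams are built from the periodic ball diagram alone.

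For the weights, I would use \eqref{eq:rho-index}, which fixes the total increment at $1$. It then remains to locate which stream's altitude increases. Under $\Lop$, the window boundary on the image side shifts past exactly the balls with image residue $\ol n$. The stream containing them is in row $\row_P(\ol n)$, and its altitude, measured relative to the window, goes up by one. Under $\Rop$, the balls that cross the domain cut are those with domain residue $\ol 1$ (they become position $n$ shifted by a period), giving $\e_{\row_Q(\ol1)}$. The alternative is to cite \cite[Section~3]{CLP18}, where these shift rules appear as the "$\rho$ changes under rotation" lemmas. In the same step I would check dominance: the shifted vector must still be dominant for the new pair, or else one must show that no $\Dom$ normalization is required.

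The main obstacle I expect is this last point, namely whether $\rho+\e_{\row_Q(\ol1)}$ is already dominant for $(P,\sigma^{-1}Q)$. I would verify it through the local-charge inequalities \eqref{eq:AMBC-dominance}. Rotating $Q$ by $\sigma^{-1}$ moves $\ol1$ to $\ol n$, and this can change $\lch_i(Q)$ by exactly one, and only for the row pair containing it. The change should be compensated by the unit increment in that row. If this fails, I will instead use the symmetry $\AMBC(w^{-1})$ from \eqref{eq:AMBC-inverse} to convert the $\Rop$ statement into the $\Lop$ statement for $w^{-1}$, using $(\Rop w)^{-1}=\Lop^{-1}(w^{-1})$. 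Then only the $\Lop$ case needs a direct verification.
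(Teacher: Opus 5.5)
Your proposal is correct and is essentially the paper's proof. The dual formulas are read off from Lemma~\ref{lem:stable-cut} in a common stable window, and the AMBC formulas come from translation-equivariance of the stream decomposition: only the stream whose cell crosses the cut (domain residue \(\ol1\) for \(\Rop\), image residue \(\ol n\) for \(\Lop\)) gains one unit of altitude.

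Your dominance worry is harmless, since forward AMBC output is automatically dominant. Your local-charge check also goes through, once you note that \emph{two} adjacent pairs move, not one: with \(r=\row_Q(\ol1)\), \(\lch_{r-1}(Q)\) drops by one and \(\lch_r(Q)\) rises by one, and \(\e_r\) compensates both exactly. So the inversion fallback, which would reintroduce \(\Dom\), is unnecessary.
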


\begin{proof}
The two dual formulas are Lemma~\ref{lem:stable-cut}.  For AMBC, translating the domain
cut leaves the image residues fixed and replaces the recording tabloid by
\(\sigma^{-1}Q\).  Exactly the stream containing \(\ol1\) in its domain set gains one
unit of altitude, which gives \eqref{eq:R-ambc}.  Translating the image cut is the
transposed calculation: the recording tabloid is fixed, the insertion tabloid becomes
\(\sigma P\), and the stream containing \(\ol n\) in its image set gains one unit,
giving \eqref{eq:L-ambc}.  These statements also follow directly from the defining-data
description of a stream in \cite[Section~3.4]{CPY18}.  Finally,
\(\E\sigma^{-1}=\sigma\E\) makes the two recording-tabloid formulas compatible with
\(\bar Q=\E(Q)\).
\end{proof}

Thus the three requested windows have normalized dual data
\begin{align*}
 [w(1),\ldots,w(n)]
   &\longleftrightarrow (\bar P,\bar Q,\beta),\\
 [w(2),\ldots,w(n+1)]
   &\longleftrightarrow
   (\bar P,\sigma\bar Q,\beta+\e_{\row_{\bar Q}(\ol n)}),\\
 [w(1)+1,\ldots,w(n)+1]
   &\longleftrightarrow
   (\sigma\bar P,\bar Q,\beta+\e_{\row_{\bar P}(\ol n)}).
\end{align*}

\begin{corollary}[Iterated window transformations]\label{cor:iterated-shifts}
For \(0\le m\le n\),
\begin{align}
 \beta(\Rop^m w)&=\beta(w)+c_{\bar Q}(m),&
 \rho(\Rop^m w)&=\rho(w)+d_Q(m),
 \label{eq:iterate-R}\\
 \beta(\Lop^m w)&=\beta(w)+c_P(m),&
 \rho(\Lop^m w)&=\rho(w)+c_P(m).
 \label{eq:iterate-L}
\end{align}
In particular,
\begin{equation}\label{eq:central-shift}
 \beta(w+n)=\beta(w)+\mu,
 \qquad
 \rho(w+n)=\rho(w)+\mu.
\end{equation}
\end{corollary}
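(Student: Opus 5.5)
The plan is to prove Corollary~\ref{cor:iterated-shifts} by induction on \(m\), applying the one-step formulas of Theorem~\ref{thm:one-step} at each step. The key point is to track how the tabloids rotate along the way: the row vector added at each step comes from the \emph{current} tabloid, not the original one.

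For the \(\beta\)-formula under \(\Rop\), write \(\bar Q_k:=\bar Q(\Rop^k w)\). By \eqref{eq:R-dual}, \(\bar Q_k=\sigma^k\bar Q\) and
\[
\beta(\Rop^{k+1}w)-\beta(\Rop^k w)=\e_{\row_{\sigma^k\bar Q}(\ol n)}.
\]
Since \(\sigma\) replaces each entry \(\ol a\) by \(\ol{a+1}\), we have \(\row_{\sigma^kU}(\ol n)=\row_U(\ol{n-k})\). Summing over \(k=0,\ldots,m-1\) therefore gives \(\sum_{a=n-m+1}^{n}\e_{\row_{\bar Q}(\ol a)}=c_{\bar Q}(m)\).

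The \(\Lop\)-formulas work the same way. By \eqref{eq:L-dual} and \eqref{eq:L-ambc}, the insertion tabloid becomes \(\sigma^kP\) after \(k\) steps, where \(\bar P=P\) by \eqref{eq:tabloid-comparison}. The increments are \(\e_{\row_P(\ol{n-k})}\), and these sum to \(c_P(m)\) in both cases.

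For \(\rho\) under \(\Rop\), \eqref{eq:R-ambc} gives \(Q(\Rop^k w)=\sigma^{-k}Q\), and the increment is
\[
\e_{\row_{\sigma^{-k}Q}(\ol1)}=\e_{\row_Q(\ol{1+k})}.
\]
Summing over \(k=0,\ldots,m-1\) yields \(d_Q(m)\).

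One point needs checking before the induction is legitimate. Theorem~\ref{thm:one-step} must apply to every intermediate permutation \(\Rop^k w\) and \(\Lop^k w\). It does, because the theorem is stated for arbitrary \(w\in\Sn\) and its hypotheses involve only the data of that permutation. The stable-window requirement in Lemma~\ref{lem:stable-cut} can always be met by taking \(M\) large, and \(\beta\) does not depend on the choice of stable window by Theorem~\ref{thm:beta-consistency}(i).

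For the central shift, take \(m=n\) and use \eqref{eq:central-window}, which gives \(\Rop^n w=w+n\). Then \eqref{eq:countvectors-total} gives \(c_{\bar Q}(n)=d_Q(n)=\mu\), which is \eqref{eq:central-shift}. Since \(\Lop^n w\) is also \(w+n\), the \(\Lop\)-route must give the same answer, and indeed \(c_P(n)=\mu\).

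The only real obstacle is keeping the residue bookkeeping straight, that is, the identities \(\row_{\sigma^kU}(\ol b)=\row_U(\ol{b-k})\) with the correct signs for \(\sigma\) versus \(\sigma^{-1}\). Everything else is a telescoping sum.
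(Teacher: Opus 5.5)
Your proof is correct and follows the paper's own argument. You iterate Theorem~\ref{thm:one-step}, observe that after \(k\) steps the relevant tabloid has been rotated by \(\sigma^{\pm k}\), so the \(k\)-th increment records the residue \(\ol{n-k}\) (respectively \(\ol{k+1}\) for \(\rho\) under \(\Rop\)), sum these increments, and take \(m=n\) to get \(\mu\) by \eqref{eq:countvectors-total}. Your explicit identity \(\row_{\sigma^kU}(\ol b)=\row_U(\ol{b-k})\) is exactly the bookkeeping the paper compresses into ``record the residue crossing the cut at each step.''
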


\begin{proof}
Apply Theorem~\ref{thm:one-step} successively and record the residue crossing the cut at
each step.  At \(m=n\), every residue has crossed once, and
\(c_U(n)=d_U(n)=\mu\).
\end{proof}

\begin{example}[A complete window-shift calculation]\label{ex:window-shift}
The dual growth diagram in \cite[Example~3.17]{HZ26} gives, for
\(w=[10,3,-3,12]\in\widetilde S_4\),
\begin{align*}
 \bar P&=\yttab{\ol1&\ol3\\ \ol2\\ \ol4},&
 \bar Q&=\yttab{\ol1&\ol2\\ \ol3\\ \ol4},\\
 \mu&=(2,1,1),&
 \lambda&=(6,6,5),\qquad N_0=7.
\end{align*}
Thus
\[
 \beta=5\mu-\lambda=(4,-1,0),
 \qquad \sum_i\beta_i=3=\ind(w).
\]
Since \(\ol4\) is in the third row of both \(\bar P\) and \(\bar Q\),
Theorem~\ref{thm:one-step} gives
\[
 \Rop w=[3,-3,12,14],\qquad
 (\bar P,\bar Q,\beta)(\Rop w)
 =\left(\bar P,\yttab{\ol2&\ol3\\ \ol4\\ \ol1},(4,-1,1)\right),
\]
and
\[
 \Lop w=[11,4,-2,13],\qquad
 (\bar P,\bar Q,\beta)(\Lop w)
 =\left(\yttab{\ol2&\ol4\\ \ol3\\ \ol1},\bar Q,(4,-1,1)\right).
\]
After four steps, both routes give
\(\beta(w+4)=\beta(w)+\mu=(6,0,1)\), illustrating
\(\Rop^4=\Lop^4\) at the level of the cocycle.
\end{example}

\subsection{Algebraic relations among the transformations}

Let \(\mathfrak I(w)=w^{-1}\).  The elementary transformations generate a small
dihedral-type symmetry around the central translation \(z(w)=w+n\).

\begin{proposition}[Transformation algebra]\label{prop:transformation-algebra}
On \(\Sn\), one has
\begin{align}
 \mathfrak I^2&=1,
 &\Rop\Lop&=\Lop\Rop,
 &\Rop^n&=\Lop^n=z,
 \label{eq:transformation-algebra-1}\\
 \mathfrak I\Rop&=\Lop^{-1}\mathfrak I,
 &\mathfrak I\Lop&=\Rop^{-1}\mathfrak I.
 \label{eq:transformation-algebra-2}
\end{align}
Inversion takes every right affine Knuth move to a left affine Knuth move and conversely.
Consequently, all formulas for left moves follow from the right-move formulas together
with Theorem~\ref{thm:inverse-tabloids} and Corollary~\ref{cor:inverse-beta}.
\end{proposition}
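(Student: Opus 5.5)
The plan is to verify each identity directly on windows, or on the defining formulas \(\Rop w=w\circ\tau\), \(\Lop w=\tau\circ w\), \(\mathfrak I(w)=w^{-1}\), \(z(w)=w+n\). After that, I will read off the statement about Knuth moves from the multiplicative descriptions \(w\mapsto ws_i\) and \(w\mapsto s_iw\).

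\(\mathfrak I^2=1\) is immediate. For commutativity,
\[
\Rop\Lop w=\tau\circ w\circ\tau=\Lop\Rop w,
\]
by associativity of composition. For the central identity,
\[
\Rop^n w=w\circ\tau^n \qquad\text{and}\qquad \Lop^n w=\tau^n\circ w.
\]
Both equal \(i\mapsto w(i)+n\), since \(w(i+n)=w(i)+n\); this is \eqref{eq:central-window}. In particular \(\tau^n\) is central in \(\Sn\), and so \(z\) commutes with \(\Rop\), \(\Lop\) and \(\mathfrak I\). The last point uses \((w+n)^{-1}=w^{-1}-n\) together with \(\tau^{-n}\) being central.

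For \eqref{eq:transformation-algebra-2}, compute
\[
\mathfrak I\Rop w=(w\circ\tau)^{-1}=\tau^{-1}\circ w^{-1}=\Lop^{-1}(w^{-1})=\Lop^{-1}\mathfrak I w .
\]
Symmetrically,
\[
\mathfrak I\Lop w=(\tau\circ w)^{-1}=w^{-1}\circ\tau^{-1}=\Rop^{-1}\mathfrak I w .
\]
Here \(\Rop^{-1}\) and \(\Lop^{-1}\) are right and left composition with \(\tau^{-1}\); they are the inverse window transformations. These relations are consistent with \eqref{eq:index-shifts-prelim}, because inversion negates the index.

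For the Knuth statement I will use the equivalent descriptions recalled in the preliminaries. A right affine Knuth move is an admissible \(w\mapsto ws_i\). By the definition given there, a left move is \(w\mapsto (w^{-1}s_i)^{-1}=s_iw\), admissible exactly when the right move on \(w^{-1}\) is admissible. So \(\mathfrak I\) maps the edge \(\{w,ws_i\}\) to the edge \(\{w^{-1},s_iw^{-1}\}\), which is a left move. Conversely, a left move \(\{w,s_iw\}\) maps to \(\{w^{-1},w^{-1}s_i\}\), and its admissibility is precisely the defining admissibility condition applied to \(w^{-1}\). The last sentence of the proposition then needs no separate argument. To obtain the left-move formulas, transport a left move through \(\mathfrak I\) to a right move, apply Theorem~\ref{thm:dual-knuth}, and transport back using Theorem~\ref{thm:inverse-tabloids} for the tabloids and Corollary~\ref{cor:inverse-beta} for \(\beta\).

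There is no real obstacle here. The only care needed is with conventions: checking that "right multiplication by \(s_i\)" swaps positions (the domain) while left multiplication swaps values, so that \(\mathfrak I\) genuinely interchanges the two kinds of move. I also need to note that the deferred reference to Corollary~\ref{cor:inverse-beta} is only a forward pointer, not a logical dependency of the algebraic relations.
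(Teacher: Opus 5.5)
Your proof is correct and matches the paper's. The relations come directly from \(\Rop w=w\circ\tau\), \(\Lop w=\tau\circ w\), periodicity, and \((w\circ\tau)^{-1}=\tau^{-1}\circ w^{-1}\), and the Knuth statement is essentially definitional: you phrase it via \((w^{-1}s_i)^{-1}=s_iw\), the paper via matrix transposition exchanging rows and columns.

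One aside is false, though your proof does not use it. \(z\) does not commute with \(\mathfrak I\): your own identity \((w+n)^{-1}=w^{-1}-n\) gives \(\mathfrak I z=z^{-1}\mathfrak I\), equivalently \(\mathfrak I\Rop^n=\Lop^{-n}\mathfrak I\). This inversion of \(z\) is the dihedral-type relation the paper alludes to.
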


\begin{proof}
The first three identities follow from associativity, the definitions
\(\Rop w=w\circ\tau\), \(\Lop w=\tau\circ w\), and periodicity.  Moreover,
\[
 (\Rop w)^{-1}=\tau^{-1}\circ w^{-1}=\Lop^{-1}(w^{-1}),
 \qquad
 (\Lop w)^{-1}=w^{-1}\circ\tau^{-1}=\Rop^{-1}(w^{-1}),
\]
which proves \eqref{eq:transformation-algebra-2}.  Matrix transposition exchanges rows
and columns, so it exchanges right and left Knuth moves.
\end{proof}

These group identities impose nontrivial consistency relations on the tabloid and weight
formulas.  For example, the two factorizations of the central translation give the same
total increment
\[
 \sum_{a=1}^{n}\e_{\row_{\bar Q}(\ol a)}
 =\sum_{a=1}^{n}\e_{\row_{\bar P}(\ol a)}=\mu.
\]
Likewise, applying inversion to a domain shift converts the row contribution on the
recording side to the corresponding inverse value-shift contribution on the insertion
side.  The dominant-representative term in \eqref{eq:inverse-beta-final} is precisely
what is required for this compatibility.

\subsection{The cocycle viewpoint}\label{subsec:cocycle}

Let \(\mathcal G_n\) be the directed graph whose vertices are extended affine
permutations and whose edges are right or left affine Knuth moves and the four window
edges \(\Rop^{\pm1},\Lop^{\pm1}\).  Define the vector-valued edge function
\begin{equation}\label{eq:beta-cocycle}
 c_\beta(w\to w'):=\beta(w')-\beta(w)\in\Z^\ell.
\end{equation}

\begin{proposition}[The transformation cocycle]\label{prop:beta-cocycle}
The function \(c_\beta\) is an integral \(1\)-cocycle.  More explicitly,
\begin{enumerate}[label=(\roman*),leftmargin=2.2em]
\item reversing an edge negates its value;
\item the value on a concatenated path is the sum of the edge values;
\item the sum is zero around every closed path in \(\mathcal G_n\);
\item on right-move and window edges its values are exactly
\begin{equation}\label{eq:beta-cocycle-values}
 \begin{array}{c|c}
 \text{edge}&c_\beta\\ \hline
 \text{noncyclic Knuth}&0\\
 \text{cyclic Knuth}&
   \e_{\row_{\bar Q}(\ol n)}-\e_{\row_{\bar Q}(\ol1)}\\
 w\to\Rop w&\e_{\row_{\bar Q}(\ol n)}\\
 w\to\Lop w&\e_{\row_{\bar P}(\ol n)}.
 \end{array}
\end{equation}
\end{enumerate}
The same construction gives a cocycle \(c_\rho\), and the difference
\(c_\beta-c_\rho\) is again an integral \(1\)-cocycle.
\end{proposition}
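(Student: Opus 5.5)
The cocycle is defined as a difference of values of a single function, so the plan is to treat \(c_\beta\) as a coboundary and read off (i)--(iii) formally. The only substantive content is (iv), which is already contained in Theorems~\ref{thm:dual-knuth} and~\ref{thm:one-step}. The first step is to check that \(c_\beta\) is well defined on every edge of \(\mathcal G_n\). This holds because \(\beta\) is a genuine function on \(\Sn\): by Theorem~\ref{thm:beta-consistency}, the data \((\bar P,\bar Q,\lambda,N_0)\) determine \(\beta\), and \(\RSd\) assigns this data to each vertex. Integrality is immediate, since \(\mu\), \(\lambda\) and \(N_0-2\) are integral.

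Next I would prove (i)--(iii) by telescoping. Reversing \(w\to w'\) gives \(\beta(w)-\beta(w')\), which is the negative. For a path \(w_0\to w_1\to\cdots\to w_k\), the sum \(\sum_j(\beta(w_{j+1})-\beta(w_j))\) collapses to \(\beta(w_k)-\beta(w_0)\). A closed path has \(w_k=w_0\), so its sum vanishes. Here I would remark that the reversed edges of \(\Rop,\Lop\) are the edges \(\Rop^{-1},\Lop^{-1}\), and that Knuth edges are symmetric because \(s_i^2=1\). This makes the edge set closed under reversal, so (i) is meaningful inside \(\mathcal G_n\).

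For (iv) I would quote the earlier results row by row. The noncyclic and cyclic Knuth rows are \eqref{eq:beta-noncyclic-knuth} and \eqref{eq:beta-cyclic-knuth}; in the cyclic case \(\bar Q\) means the tabloid at the source vertex, consistent with the orientation fixed in Theorem~\ref{thm:dual-knuth}. The window rows are \eqref{eq:R-dual} and \eqref{eq:L-dual}. The inverse window edges \(\Rop^{-1},\Lop^{-1}\) and the left Knuth edges need not be tabulated, since (i) and Proposition~\ref{prop:transformation-algebra} determine them from the listed values.

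Finally, for \(\rho\), the same telescoping argument applies verbatim to \(\rho\), which is a function on \(\Sn\) by the AMBC bijection \eqref{eq:AMBC}. Its values on the same edges are given by \eqref{eq:rho-knuth}, \eqref{eq:R-ambc} and \eqref{eq:L-ambc}. The difference \(c_\beta-c_\rho\) is the coboundary of \(\beta-\rho\), so it is again an integral \(1\)-cocycle. The only point needing care is this orientation and well-definedness bookkeeping: each tabloid appearing in a table entry must be evaluated at the source vertex, since otherwise the cyclic Knuth formula would be ambiguous. Beyond that there is no real obstacle, because everything reduces to the fact that these cocycles are exact.
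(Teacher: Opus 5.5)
Your proposal is correct and follows the paper's own argument: (i)--(iii) by telescoping, since \(c_\beta\) is the coboundary of the vertex function \(\beta\); (iv) by citing Theorems~\ref{thm:dual-knuth} and~\ref{thm:one-step}; and the final claim by subtraction. Your added bookkeeping is a harmless refinement of that same proof: the edge set is closed under reversal, and tabloids in the cyclic Knuth entry are evaluated at the source vertex.
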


\begin{proof}
The first three assertions telescope because \(c_\beta\) is the differential of the
vertex function \(\beta\) on the lifted permutation graph.  The displayed values are
Theorems~\ref{thm:dual-knuth} and \ref{thm:one-step}.  Subtraction gives the last
statement.
\end{proof}

The distinction between exactness and monodromy appears only after projecting away the
weight coordinate.  On \(\mathcal G_n\), the cocycles are exact.  On the tabloid graph,
a closed tabloid path can lift to a path whose endpoints have different weights; the
period of the projected cocycle is then the monodromy.  Taking suitable block-prefix
sums of these vector cocycles produces the scalar cochains used in
Section~\ref{sec:comparison}.

\section{The relation between \texorpdfstring{\(\rho\)}{rho} and
\texorpdfstring{\(\beta\)}{beta}}\label{sec:comparison}

This section contains the two principal comparison theorems.  First we show that
\(\beta-\rho\) is independent of the altitude in a fixed tabloid fiber.  We then prove
that the monodromies agree and use this equality to construct the two-potential formula.

\subsection{Altitude independence and the correction vector}

Decompose the row indices into maximal equal-part blocks
\begin{equation}\label{eq:blocks}
 \{1,\ldots,\ell\}=B_1\sqcup\cdots\sqcup B_k,
 \qquad B_s=[a_s,b_s],
 \qquad q_s=|B_s|.
\end{equation}
For each block, write
\begin{equation}\label{eq:block-data}
 d_s:=\mu_{a_s},\qquad \tau_s:=\sum_{j>b_s}\mu_j,
 \qquad b_0:=0,\qquad \tau_0:=n.
\end{equation}
For \(a\in B_s\), set
\begin{equation}\label{eq:packet-vector}
                         v_{s,a}:=\sum_{j=a}^{b_s}\e_j.
\end{equation}

\begin{lemma}[Packet shift in a stable window]\label{lem:packet-stable-window}
Let \((P,Q,\rho)\) and \((P,Q,\rho+v_{s,a})\) be dominant AMBC data, and let
\(w\) and \(w^+\) be their inverse AMBC images.  There is an index \(M_1\) such that,
for every \(M\ge M_1\), the corresponding stable corner partitions satisfy
\begin{equation}\label{eq:packet-lambda}
             \lambda_{w^+}[M]=\lambda_w[M]-v_{s,a}.
\end{equation}
The stable color multiplicities and both stable boundary tabloids are the same for
\(w\) and \(w^+\).
\end{lemma}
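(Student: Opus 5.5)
The plan is to realize the packet shift \(\rho\mapsto\rho+v_{s,a}\) geometrically on the AMBC side, and then transport that geometric description to a late stable window of the dual growth diagram.

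\emph{Step 1: the stream picture.} In the inverse AMBC construction of \cite{CPY18}, the weight coordinate \(\rho_j\) is the altitude of the stream of row \(j\). Adding \(v_{s,a}\) raises by one unit the altitudes of the streams of rows \(a,\ldots,b_s\), all of density \(d_s\). These are the last rows of the equal-length block \(B_s\). The dominance inequalities \eqref{eq:AMBC-dominance} inside \(B_s\) only involve differences \(\rho_{j+1}-\rho_j\). Adding \(v_{s,a}\) raises \(\rho_a-\rho_{a-1}\) by one, when \(a-1\in B_s\), and leaves every other adjacent difference unchanged. So \(\rho+v_{s,a}\) is again dominant, consistent with the hypothesis. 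No inequality crosses a strict drop, so streams in other blocks impose no constraint.

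I would then argue that far to the northeast, after the streams have separated, \(w^+\) and \(w\) have the same periodic ball configuration except that the streams in the packet are translated by one period relative to the others. To make this precise, I would use the window formulas of Theorem~\ref{thm:one-step} and Corollary~\ref{cor:iterated-shifts} as a model. For instance, \(\rho(w+n)=\rho+\mu\) shows that a uniform altitude shift is a central translation. The packet shift is the analogous translation applied only to a union of stream families.

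\emph{Step 2: transport to the dual diagram.} I would choose \(M_1\) larger than the stability indices of both \(w\) and \(w^+\), and large enough that every tile on which the two ball configurations differ lies southwest of \(\Omega_{M_1}\). The lemma's final sentence then follows. The stable colors are indexed by streams and their multiplicities by the densities, so \(\mu\) is unchanged. The boundary residues are read from domain and image residues, which a translation by whole periods does not change. Combining this with \eqref{eq:tabloid-comparison}, the boundary tabloids \(\bar P=P\) and \(\bar Q=\E(Q)\) coincide for \(w\) and \(w^+\).

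For the corner partitions, I would run the strip argument of Lemma~\ref{lem:stable-cut} once for each shifted color. In a stable region, bump tiles only join lines of the same color. A one-unit delay of a color family therefore propagates along that family alone and removes one box from its row at the northeast corner. Since the families in rows \(a,\ldots,b_s\) are each delayed once, the result is \(\lambda_{w^+}[M]=\lambda_w[M]-v_{s,a}\).

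\emph{Main obstacle.} The hard step is the identification at the start of Step 2: that the AMBC altitude shift corresponds exactly to a one-period delay of the matching dual shadow-line colors, with no relabeling of colors. Equal densities allow streams in \(B_s\) to be permuted, and the dual color order within a block might not match the AMBC row order. I would first try to use the fact that \(v_{s,a}\) is a suffix of the block, which should make the statement insensitive to that ambiguity. As a sanity check, taking \(a=a_s\) for every block simultaneously gives \(v=\mu\)-type information consistent with \eqref{eq:central-shift}. I would then verify the sign and size using Proposition~\ref{prop:beta-index}: the index rises by \(|v_{s,a}|\), so \(|\lambda[M]|\) must fall by the same amount, which matches \eqref{eq:packet-lambda}.
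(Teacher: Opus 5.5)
Your Step~2 matches the second half of the paper's proof. Once you know that, in a common late stable window, the diagrams of \(w\) and \(w^+\) differ only by a one-unit displacement of the color-\(j\) families for \(j\in[a,b_s]\), the strip argument of Lemma~\ref{lem:stable-cut} removes one box from each of those rows. The claim about \(\mu\) and the boundary tabloids is immediate, because \(\AMBC(w^+)=(P,Q,\rho+v_{s,a})\) and \eqref{eq:tabloid-comparison} applies to both permutations.

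The genuine gap is the step you yourself call the main obstacle, and it is the actual content of the lemma. Nothing in the proposal shows that raising the altitudes of rows \(a,\dots,b_s\) by one yields exactly such a displacement, on exactly those colors, with no relabeling of families inside the equal-density block. ``I would argue'' and ``I would first try to use the suffix property'' do not supply it.

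The paper fills this by comparing the two \emph{backward} AMBC constructions row by row:
a unit altitude increase shifts the proper numbering of that stream, and the backward-numbering algorithm is equivariant under this shift;
adjacent streams inside the suffix are shifted together;
at the single external interface (rows \(a-1\) and \(a\), when \(a>a_s\)), dominance keeps stream \(a\) on the same side of the concurrent position, so no backward zigzag families are interchanged.
Induction over the backward steps then yields the strip picture. This is where the suffix hypothesis and dominance are actually used. In your Step~1, dominance is only checked, never used.

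The model you propose for the missing step is also miscalibrated.
\begin{enumerate}[label=(\roman*),leftmargin=2.2em]
\item By Corollary~\ref{cor:iterated-shifts}, \(w\mapsto w+n\) changes \(\rho\) by \(\mu\), not by \((1,\dots,1)\). So it is not a uniform unit altitude shift.
\item Your sanity check fails: taking \(a=a_s\) in every block gives \(\sum_s v_{s,a_s}=(1,\dots,1)\), which is not \(\mu\) unless \(\mu=(1^n)\).
\item By the same count, translating one density-\(d_s\) family through a full period would carry \(d_s\) of its residues across the cut and remove \(d_s\) boxes from its row. So ``translated by one period'' in Step~1 is inconsistent with the ``one-unit delay, one box'' of Step~2. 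A unit of altitude corresponds to a single residue crossing, i.e.\ one strip as in Lemma~\ref{lem:stable-cut}.
\item More basically, \(\Rop\) and \(\Lop\) always rotate one of the two tabloids, whereas the packet shift fixes \((P,Q)\). The window formulas therefore cannot describe it.
\item The size check through Proposition~\ref{prop:beta-index} gives only \(|\lambda_{w^+}[M]|=|\lambda_w[M]|-|v_{s,a}|\). It cannot tell you which rows lose the boxes, and that distribution is what \eqref{eq:packet-lambda} asserts.
\end{enumerate}
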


\begin{proof}
We compare the two backward AMBC constructions row by row.  A row of the AMBC data
specifies a stream by its domain residues, image residues, and altitude
\cite[Definitions~3.20--3.25]{CPY18}.  Increasing its altitude by one translates its
proper stream numbering by one period.  The backward-numbering algorithm is equivariant
under this translation: its zigzags have the same residue sets and their back
corner-posts are shifted through one fundamental cut.

The shifted rows are the suffix \(a,a+1,\ldots,b_s\) of one equal-density block.  At
each internal interface both adjacent streams are translated together.  At the only
external interface, between rows \(a-1\) and \(a\) when \(a>a_s\), dominance says that
the second stream remains on the same side of the concurrent position.  Hence no two
backward zigzag families are interchanged.  Induction over the backward steps shows that
the two periodic ball diagrams have the same stream residue data and that, for every
\(j\in[a,b_s]\), their color-\(j\) families differ by one translated fundamental strip.

Choose \(M_1\) above every marked tile and beyond the stabilization indices of both
diagrams.  In the \(M\)-th windows identify the complements of those strips.  Along a
shifted strip, the dual local rule transports a discrepancy of one in the color-\(j\)
label to the northeast corner.  Cross tiles transport the label unchanged; at a bump
tile stability forces the incident lines to have the same color, so the discrepancy
cannot pass to another family.  Thus precisely one box is removed from row \(j\) of the
corner partition.  Summing over the shifted colors gives \eqref{eq:packet-lambda}.

The domain and image residue sets of every stream were unchanged, so the AMBC tabloids
remain \((P,Q)\).  Equation~\eqref{eq:tabloid-comparison} then gives the same stable
boundary tabloids \((P,\E(Q))\), and the stable multiplicities remain \(\mu\).
\end{proof}

\begin{proposition}[Altitude packet deformation]\label{prop:altitude}
Let \(w=\AMBC^{-1}(P,Q,\rho)\).  If \(\rho+v_{s,a}\) is dominant for \((P,Q)\) and
\(w^+=\AMBC^{-1}(P,Q,\rho+v_{s,a})\), then the dual tabloids are unchanged and
\begin{equation}\label{eq:beta-packet}
                         \beta(w^+)=\beta(w)+v_{s,a}.
\end{equation}
The same statement holds for every integral combination of packet vectors that remains
in the dominant region.
\end{proposition}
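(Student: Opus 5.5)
The proposition follows almost directly from Lemma~\ref{lem:packet-stable-window} together with the stable-window invariance of \(\beta\) in Theorem~\ref{thm:beta-consistency}(i). I would organize the argument in three steps: tabloids, the weight identity, and then the extension to integral combinations.

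\emph{Tabloids.} By Lemma~\ref{lem:packet-stable-window}, \(w\) and \(w^+\) have the same stable color multiplicities \(\mu\) and the same stable boundary tabloids. More directly, both have AMBC tabloids \((P,Q)\), so \eqref{eq:tabloid-comparison} gives \(\bar P=P\) and \(\bar Q=\E(Q)\) for both. Hence the dual tabloids agree.

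\emph{Weight identity.} Choose \(M\ge \max(M_1,N_0(w),N_0(w^+))\), so that the \(M\)-th window is stable for both permutations and \eqref{eq:packet-lambda} holds. By Theorem~\ref{thm:beta-consistency}(i), \(\beta(w)=\beta_w[M]\) and \(\beta(w^+)=\beta_{w^+}[M]\). Since \(\mu\) is common, Definition~\ref{def:beta} gives
\[
\beta(w^+)-\beta(w)=\bigl(\mu(M-2)-\lambda_{w^+}[M]\bigr)-\bigl(\mu(M-2)-\lambda_w[M]\bigr)=\lambda_w[M]-\lambda_{w^+}[M]=v_{s,a}.
\]
The point is that the choice of \(N_0\) never enters: \(N_0(w)\) and \(N_0(w^+)\) may differ, but \(\beta\) can be read off in any common stable window. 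As a check, the coordinate sums change by \(|v_{s,a}|=b_s-a+1\), which matches \eqref{eq:rho-index} and Proposition~\ref{prop:beta-index}.

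\emph{Integral combinations.} Let \(\rho'=\rho+\sum c_{s,a}v_{s,a}\) be dominant. I would connect \(\rho\) to \(\rho'\) by a chain of single packet steps \(\pm v_{s,a}\) that stays in the dominant region, and then apply the single-step result (or its reverse) along the chain and telescope.

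The main obstacle is constructing this dominant chain. The dominance inequalities \eqref{eq:AMBC-dominance} only constrain differences \(\rho_{i+1}-\rho_i\) inside each block \(B_s\). Adding \(v_{s,a}\) raises exactly one such difference, namely \(\rho_a-\rho_{a-1}\) when \(a>a_s\), and leaves all others unchanged. Adding \(v_{s,a_s}\) shifts the whole block and changes none of them. Consequently the dominant region is a product of translated orthants in the coordinates \(\rho_a-\rho_{a-1}\) for \(a>a_s\) within each block, together with a free block-total coordinate. In these coordinates, each \(v_{s,a}\) is a unit vector. I would therefore first perform all the steps with \(c_{s,a}>0\), which only increase differences and so preserve dominance, and afterwards perform the steps with negative coefficients. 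Every intermediate point then has each difference coordinate lying between its initial and final values, both of which are dominant, so the whole chain stays in the dominant region.
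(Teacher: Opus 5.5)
Your argument is correct and is essentially the paper's: the single step is read off in a common stable window using Lemma~\ref{lem:packet-stable-window} and the stable-window invariance of \(\beta\). The extension to integral combinations goes through a dominant chain of packet steps, which you build explicitly (all positive steps first, then the negative ones), whereas the paper defers to the slack-adjustment argument in the proof of Corollary~\ref{cor:kappa-independent}; one harmless slip is that the packet vectors are unit vectors in the coordinates given by the in-block differences together with \(\rho_{a_s}\), not the block total, but since that coordinate is unconstrained this does not affect your chain argument.
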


\begin{proof}
Take a common stable index \(M\) as in Lemma~\ref{lem:packet-stable-window}.  Then
\[
 \beta(w^+)
 =\mu(M-2)-\lambda_{w^+}[M]
 =\mu(M-2)-\lambda_w[M]+v_{s,a}
 =\beta(w)+v_{s,a}.
\]
The assertion about the dual tabloids is the last statement of that lemma.  Iterating
the one-packet result proves the assertion for an integral combination along any path
that stays dominant; the connectivity argument in the next proof shows that such a path
may always be chosen when both endpoints are dominant.
\end{proof}

\begin{corollary}[Tabloid correction]\label{cor:kappa-independent}
For fixed \((P,Q)\), the difference \(\beta-\rho\) is independent of the dominant weight
\(\rho\).  Hence
\begin{equation}\label{eq:kappa}
                         \kappa(P,Q):=\beta-\rho
\end{equation}
is a well-defined tabloid correction, and
\begin{equation}\label{eq:kappa-sum}
                         \sum_{i=1}^{\ell}\kappa_i(P,Q)=0.
\end{equation}
\end{corollary}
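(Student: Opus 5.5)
The plan is to show that, for a fixed pair \((P,Q)\), the function \(\rho\mapsto\beta(\AMBC^{-1}(P,Q,\rho))-\rho\) is constant on the dominant region \(\mathcal D(P,Q)\subset\Z^\ell\). The coordinate-sum identity \eqref{eq:kappa-sum} then follows at once from Proposition~\ref{prop:beta-index} and \eqref{eq:rho-index}, since both \(\beta\) and \(\rho\) sum to \(\ind(w)\). Proposition~\ref{prop:altitude} already says that along a single packet step \(\rho\mapsto\rho+v_{s,a}\) inside \(\mathcal D(P,Q)\) the dual tabloids are unchanged and \(\beta\) increases by exactly \(v_{s,a}\). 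Hence \(\beta-\rho\) is unchanged along such steps, and also along their reverses, because \(\rho-v_{s,a}\mapsto\rho\) is a forward step. The corollary therefore reduces to a purely combinatorial connectivity statement about the dominant region.

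The key step is this connectivity claim: any two points of \(\mathcal D(P,Q)\) are joined by a chain of moves \(\pm v_{s,a}\), each of which stays in \(\mathcal D(P,Q)\). First note that the vectors \(v_{s,a}\), for \(a\in B_s\), form a unimodular basis of \(\Z^{B_s}\), since they are the suffix sums. Hence the full family spans \(\Z^\ell\) over \(\Z\).

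Next, use the description \eqref{eq:AMBC-dominance}. The region is cut out only by the inequalities \(\rho_{i+1}-\rho_i\ge\lch_i(P)-\lch_i(Q)\) for consecutive \(i,i+1\) in a common block. There are no constraints between blocks and no constraint on the block sums. In the coordinates \(\delta_i=\rho_{i+1}-\rho_i\) (for \(i,i+1\in B_s\)) together with the first coordinate \(\rho_{a_s}\) of each block:
\begin{itemize}[leftmargin=2.2em]
\item \(v_{s,a}\) with \(a>a_s\) changes only \(\delta_{a-1}\), by \(+1\);
\item \(v_{s,a_s}\) translates the whole block, changing only \(\rho_{a_s}\).
\end{itemize}
So in these coordinates \(\mathcal D(P,Q)\) is a product of half-lines (one per adjacent pair in a block) and full lines (one per block), and the packet moves are the coordinate unit steps. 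Any two lattice points of such a product are joined by coordinatewise monotone unit steps, and every intermediate point stays in the product because each coordinate moves between its two endpoint values, both of which satisfy the lower bound.

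With connectivity in hand, fix two dominant weights \(\rho,\rho'\) for \((P,Q)\) and choose such a chain. Apply Proposition~\ref{prop:altitude} edge by edge, using the forward step for \(+v_{s,a}\) and the reversed forward step for \(-v_{s,a}\). Telescoping gives \(\beta(w')-\beta(w)=\rho'-\rho\), so \(\kappa(P,Q)\) is well defined.

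The main obstacle is making sure the coordinate description of \(\mathcal D(P,Q)\) is exactly right. I must check that \eqref{eq:AMBC-dominance} is the complete list of dominance conditions, with no hidden condition across strict drops or on block sums. I must also check that the lower bounds \(\lch_i(P)-\lch_i(Q)\) depend only on the tabloids and not on \(\rho\). Both points are part of the cited characterization in \cite{CLP18}, and I would rely on that characterization explicitly. A minor point is that Proposition~\ref{prop:altitude} requires both endpoints of each step to be dominant; the monotone path construction guarantees exactly this.
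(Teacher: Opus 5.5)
Your proposal is correct and follows the paper's own proof: the paper also writes dominance as nonnegativity of the block slacks \(\delta_i\), and notes that \(v_{s,i+1}\) raises only \(\delta_i\) while \(v_{s,a_s}\) translates the whole block. It then builds a dominant packet path by adjusting slacks and translating blocks, and concludes with Proposition~\ref{prop:altitude} plus \eqref{eq:beta-index} and \eqref{eq:rho-index}. Your description of the dominant region as a product of half-lines and lines, connected by monotone unit steps, is a slightly cleaner statement of that same connectivity step.
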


\begin{proof}
For one equal-row block \(B_s\), write
\[
 \delta_i(\rho):=\rho_{i+1}-\rho_i-\lch_i(P)+\lch_i(Q)
 \qquad(a_s\le i<b_s).
\]
Dominance is exactly \(\delta_i(\rho)\ge0\).  Adding \(v_{s,i+1}\) increases
\(\delta_i\) by one and fixes all other block slacks, whereas \(v_{s,a_s}\) translates
every coordinate of the block by one.  Given two dominant integer points, adjust each
slack to its target value, using an inverse move only while that slack is positive, and
then translate the whole block.  Repeating this independently in every block gives a
dominant packet path between the two points.

Proposition~\ref{prop:altitude} says that \(\beta\) and \(\rho\) receive the same
increment on every edge of this path.  Their difference is therefore constant.
Equation~\eqref{eq:kappa-sum} follows from \eqref{eq:beta-index} and
\eqref{eq:rho-index}.
\end{proof}

\begin{lemma}[Fiber gauge identity]\label{lem:fiber-gauge}
Fix an insertion tabloid \(P\).  Let \(e:Q\to Q'\) be either a right affine Knuth
edge or the rotation edge induced by \(w\mapsto\Rop w\).  If
\(c_\rho(e)\) and \(c_\beta(e)\) denote the corresponding AMBC and normalized-dual
weight increments, then
\begin{equation}\label{eq:fiber-gauge}
 c_\beta(e)-c_\rho(e)=\kappa(P,Q')-\kappa(P,Q).
\end{equation}
Consequently, the difference cocycle has zero period on every closed path in the
recording-fiber graph, including paths containing rotation edges.
\end{lemma}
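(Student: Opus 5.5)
The identity should follow directly from Corollary~\ref{cor:kappa-independent}, because both weights are functions on the vertices of the lifted graph \(\mathcal G_n\). Let \(e\) be realized by an edge \(w\to w'\) of \(\mathcal G_n\), where \(w=\AMBC^{-1}(P,Q,\rho)\) and \(w'\) is either the Knuth neighbour \(ws_i\) or \(\Rop w\). By Proposition~\ref{prop:beta-cocycle}, \(c_\beta(e)=\beta(w')-\beta(w)\) and \(c_\rho(e)=\rho(w')-\rho(w)\). These values do not depend on the chosen lift \(w\). For \(c_\rho\) this is clear from \eqref{eq:rho-knuth} and \eqref{eq:R-ambc}, which are tabloid expressions. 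For \(c_\beta\) it follows from Theorem~\ref{thm:dual-knuth} and Theorem~\ref{thm:one-step}, once \(\bar Q=\E(Q)\) is substituted.

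The key step is to check that both endpoints of the edge have the same insertion tabloid \(P\). For a right affine Knuth edge this is stated in the AMBC Knuth rule. For the rotation edge it is \eqref{eq:R-ambc}: \(P(\Rop w)=P\), and \(Q(\Rop w)=\sigma^{-1}Q\), so in this case \(Q'=\sigma^{-1}Q\). Given this, Corollary~\ref{cor:kappa-independent} applies at both endpoints: \(\beta(w)=\rho(w)+\kappa(P,Q)\) and \(\beta(w')=\rho(w')+\kappa(P,Q')\). Here we use that \(\rho(w)\) and \(\rho(w')\) are dominant for their respective pairs, which holds because they are AMBC outputs. Subtracting the two equations gives \eqref{eq:fiber-gauge}.

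For the consequence, take a closed path \(Q_0\to Q_1\to\cdots\to Q_m=Q_0\) in the recording-fiber graph. Lift it step by step starting from any \(w_0\) with tabloids \((P,Q_0)\). Each step leaves \(P\) fixed, so the path stays in the fiber over \(P\). Summing \eqref{eq:fiber-gauge} along the path telescopes to \(\kappa(P,Q_0)-\kappa(P,Q_0)=0\). This holds even though the lifted endpoint \(w_m\) may differ from \(w_0\), so the individual monodromies of \(c_\beta\) and \(c_\rho\) can each be nonzero while their difference has zero period.

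The main obstacle is lift-independence. One must make sure that an edge of the tabloid fiber graph, for instance an admissible Knuth edge, exists at every lift, and that its increment really is a function of the tabloids alone. In particular, a lift may land on a non-dominant \(\rho\) before \(\Dom\) is applied. I would handle this by appealing only to the vertex identities. The AMBC output \(\rho(w')\) is automatically dominant, and Corollary~\ref{cor:kappa-independent} is stated for all dominant weights. So no explicit \(\Dom\) bookkeeping is required.
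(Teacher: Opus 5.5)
Your proposal is correct and follows the paper's proof: apply Corollary~\ref{cor:kappa-independent} at both endpoints of the edge, subtract, and telescope the resulting differences \(\kappa(P,Q')-\kappa(P,Q)\) around a closed path. The paper uses the same two facts you check, namely that \(P\) is unchanged by right Knuth moves and by \(\Rop\). Your extra discussion of lift-independence and dominance is harmless but not needed, because the telescoping involves only the tabloid function \(\kappa(P,\cdot)\).
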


\begin{proof}
At the initial and terminal vertices, Corollary~\ref{cor:kappa-independent} gives
\(\beta=\rho+\kappa(P,Q)\) and
\(\beta'=\rho'+\kappa(P,Q')\).  Subtraction is exactly
\eqref{eq:fiber-gauge}.  Summing it along a path telescopes, and on a closed path the two
endpoint values of \(\kappa\) agree.
\end{proof}

\subsection{Common monodromy}

Let \(\A_\mu\) be the affine Knuth graph on tabloids of shape \(\mu\).  A path lifts,
for a fixed insertion tabloid, to a sequence of affine permutations.  If the path closes
in the tabloid graph, the endpoint weight may be translated from the initial weight; the
translation is the monodromy of the lifted path.

\begin{theorem}[Common monodromy]\label{thm:monodromy}
After identifying the recording tabloids by \(\bar Q=\E(Q)\), the normalized dual weight
\(\beta\) and the AMBC weight \(\rho\) have the same monodromy on every closed affine
Knuth path.  Their common monodromy lattice is
\begin{equation}\label{eq:monodromy-lattice}
 G_\mu=
 \left\{x\in\Z^\ell:
   \sum_{i=1}^{\ell}x_i=0,
   \quad x_i=x_{i+1}\text{ whenever }\mu_i=\mu_{i+1}
 \right\}.
\end{equation}
If \(k\) is the number of distinct row lengths of \(\mu\), then
\(\operatorname{rank}G_\mu=k-1\).
\end{theorem}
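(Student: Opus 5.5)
Equality of monodromy is a formal consequence of the fiber gauge identity, Lemma~\ref{lem:fiber-gauge}. Fix the insertion tabloid \(P\) and take a closed path \(\gamma\) in \(\A_\mu\) starting and ending at \(Q\); identify its recording vertices with dual recording tabloids through \(\bar Q=\E(Q)\). Lift \(\gamma\) to a sequence of affine permutations \(w_0\to w_1\to\cdots\to w_m\) with \(P(w_j)=P\) and \(Q(w_m)=Q(w_0)=Q\). By Theorem~\ref{thm:dual-knuth} together with \eqref{eq:tabloid-comparison}, each right Knuth edge on the AMBC side is also a right Knuth edge on the dual side, with \(\bar P\) fixed and \(\bar Q\) moved by the \(\E\)-conjugate of the tabloid move. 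The \(\rho\)-monodromy is \(\rho(w_m)-\rho(w_0)=\sum_j c_\rho(e_j)\), and the \(\beta\)-monodromy is \(\sum_j c_\beta(e_j)\). Summing \eqref{eq:fiber-gauge} along \(\gamma\) telescopes to \(\kappa(P,Q)-\kappa(P,Q)=0\), so the two monodromies coincide. Equivalently, \(\beta(w_m)-\beta(w_0)=\rho(w_m)-\rho(w_0)\), because both endpoints share \((P,Q)\) and Corollary~\ref{cor:kappa-independent} gives \(\beta-\rho=\kappa(P,Q)\) at each of them.

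It then suffices to determine the \(\rho\)-monodromy lattice. For the inclusion into \(G_\mu\), every monodromy vector has coordinate sum zero by \eqref{eq:rho-index}, since noncyclic Knuth moves preserve the index and cyclic ones change \(\rho\) by \(\e_{\row_Q(\ol1)}-\e_{\row_Q(\ol n)}\). A closed path returns to the same fiber \((P,Q)\). Two dominant weights in one fiber whose difference came from monodromy must be comparable modulo the packet moves, so I would argue that within each equal-length block \(B_s\) the monodromy is a multiple of the block-constant packet \(v_{s,a_s}\). The reason is that the sum of \(\rho\) over a block is a Weyl-invariant, well-defined quantity, while the internal differences are pinned down modulo the \(\Dom\) normalization. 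This yields the condition \(x_i=x_{i+1}\) whenever \(\mu_i=\mu_{i+1}\).

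For the reverse inclusion I would cite the monodromy computation for the tabloid Knuth graph based at \(\RRSS(\mu,\ol1)\) in \cite{CLP18}, which realizes the block differences \(v_{s,a_s}-\tfrac{d_s}{d_{s'}}\)-type combinations. If a direct argument is needed, use closed loops obtained by commuting \(\Rop^n\) past Knuth moves: a loop built from cyclic moves that transports a whole block \(B_s\) of rows around the cylinder and lands back at \(Q\) contributes \(v_{s,a_s}-v_{s',a_{s'}}\)-type integer combinations of total sum zero. Checking that these generate all of \(G_\mu\), and not just a finite-index sublattice, is the main obstacle, and I would lean on the cited result there. The rank statement is immediate: \(G_\mu\) is the sum-zero sublattice of the \(\Z^k\) of block-constant vectors, so its rank is \(k-1\).
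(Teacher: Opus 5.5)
\paragraph{Verdict.} Your first paragraph is correct and matches the paper's argument. Both endpoints of a lifted closed path share the same pair \((P,Q)\), so Corollary~\ref{cor:kappa-independent} gives \(\beta_{\mathrm{end}}-\beta_{\mathrm{start}}=\rho_{\mathrm{end}}-\rho_{\mathrm{start}}\). Your rank count at the end is also fine. The gap is in your attempted direct proof that the monodromy lies in \(G_\mu\), specifically the block-constancy \(x_i=x_{i+1}\) whenever \(\mu_i=\mu_{i+1}\).

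\paragraph{Why the block-constancy argument fails.} Lying in the same fiber and being dominant does not force block-constancy. Take \(\rho\) dominant for \((P,Q)\) with every within-block slack \(\rho_{j+1}-\rho_j-\lch_j(P)+\lch_j(Q)\) at least \(2\). Then \(\rho+\e_{i+1}-\e_i\) is again dominant for \((P,Q)\). It has the same index as \(\rho\). It differs from \(\rho\) by a sum-zero vector that is not constant on the block.

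For the same reason, your other justifications give no constraint either:
\begin{itemize}
\item ``Comparable modulo packet moves'' fails because the packet vectors \(v_{s,a}\) with \(a>a_s\), which connect dominant points of a fiber, are themselves not block-constant.
\item \(\Dom_{P,Q}\) only chooses a representative for a non-dominant vector; it does not pin down the difference between two dominant weights.
\item Well-definedness of block sums says nothing about the differences inside a block.
\end{itemize}
That a closed Knuth loop never produces a non-block-constant displacement is a property of the loops themselves, not of the fiber. It is part of the content of \cite[Theorem~7.28]{CLP18}.

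\paragraph{How to fix it.} The paper does not attempt either inclusion directly. Once the monodromies agree pointwise, it cites that theorem for the full identification of the \(\rho\)-monodromy lattice with \(G_\mu\) and transfers the result to \(\beta\). You should cite it for both inclusions; you already concede the reverse one. The only part of your forward argument that is actually justified is the sum-zero condition, which follows from \eqref{eq:rho-index} and the fact that Knuth moves preserve the index. Your sketch of generating loops built from \(\Rop^n\) and cyclic moves can then be dropped.
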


\begin{proof}
Suppose a lifted path returns to the same \((P,Q)\) and changes the AMBC weight from
\(\rho\) to \(\rho+x\).  Corollary~\ref{cor:kappa-independent} gives
\[
 \beta_{\mathrm{end}}
   =(\rho+x)+\kappa(P,Q)
   =\beta_{\mathrm{start}}+x.
\]
Thus the two monodromies agree pointwise, not merely as abstract lattices.
Chmutov--Lewis--Pylyavskyy computed the AMBC monodromy lattice as
\eqref{eq:monodromy-lattice} \cite[Theorem~7.28]{CLP18}; the same description therefore
holds for \(\beta\).  A vector in this lattice is constant on each of the \(k\) blocks,
and its \(k\) block values satisfy the single relation
\(\sum_s q_sx_s=0\); hence the rank is \(k-1\).
\end{proof}

At a block endpoint \(b_s\), define the prefix map
\begin{equation}\label{eq:prefix-map}
                         \pi_s(x):=\sum_{i=1}^{b_s}x_i.
\end{equation}
Write \(\pi=(\pi_1,\ldots,\pi_k)\).  Because an element of \(G_\mu\) is constant on
each block, its block-prefix values obey
\begin{equation}\label{eq:prefix-lattice}
 \pi(G_\mu)=
 \left\{(y_1,\ldots,y_k)\in\Z^k:
 y_0=y_k=0,\quad y_s-y_{s-1}\in q_s\Z\right\}.
\end{equation}

\subsection{The two-potential formula}

The equality of monodromies may be seen directly on a generator.  For
\(\mu=(2,1)\), the lattice is
\[
 G_\mu=\{(m,-m):m\in\Z\}.
\]
Take \(x=(1,-1)\) and choose a closed tabloid path \(\gamma\) realizing \(x\).
The two lifted paths in Figure~\ref{fig:monodromy-equality} return to the same recording
tabloid but translate their weights by the same vector.

\begin{figure}[htbp]
\centering
\begin{minipage}[t]{0.47\textwidth}
\centering
\begin{tikzpicture}[
  >=Latex,
  every node/.style={font=\small},
  state/.style={draw,rounded corners,inner xsep=6pt,inner ysep=5pt}
]
 \node[state] (b0) {$(\bar Q,\beta)$};
 \node[state,right=22mm of b0] (b1) {$(\bar Q,\beta+x)$};
 \draw[->,thick,bend left=22]
   (b0) to node[above] {$\widetilde\gamma_\beta$} (b1);
 \draw[->,densely dashed,bend right=22]
   (b0) to node[below] {$c_\beta(\gamma)=x$} (b1);
\end{tikzpicture}

\smallskip
\small (a) Dual lift.
\end{minipage}\hfill
\begin{minipage}[t]{0.47\textwidth}
\centering
\begin{tikzpicture}[
  >=Latex,
  every node/.style={font=\small},
  state/.style={draw,rounded corners,inner xsep=6pt,inner ysep=5pt}
]
 \node[state] (r0) {$(Q,\rho)$};
 \node[state,right=22mm of r0] (r1) {$(Q,\rho+x)$};
 \draw[->,thick,bend left=22]
   (r0) to node[above] {$\widetilde\gamma_\rho$} (r1);
 \draw[->,densely dashed,bend right=22]
   (r0) to node[below] {$c_\rho(\gamma)=x$} (r1);
\end{tikzpicture}

\smallskip
\small (b) AMBC lift.
\end{minipage}
\caption{Two lifts of the same closed tabloid path for
\(\mu=(2,1)\).  In both diagrams \(x=(1,-1)\), so
\(\operatorname{Mon}_\beta(\gamma)=x=\operatorname{Mon}_\rho(\gamma)\).}
\label{fig:monodromy-equality}
\end{figure}

Enlarge \(\A_\mu\) by adding the rotation edge
\(Q\to\sigma^{-1}Q\), and call the resulting graph \(\widehat\A_\mu\).
This enlarged graph is connected: affine Knuth components are classified by the charge
congruence, and the shift \(Q\mapsto\sigma^{-1}Q\) acts transitively on those components
\cite[Theorem~8.6 and Remark~8.8]{CLP18}.  For a block
endpoint \(b_s\), let
\begin{equation}\label{eq:chi}
 \chi_s(U;\ol a):=
 \begin{cases}
   1,&\row_U(\ol a)\le b_s,\\
   0,&\row_U(\ol a)>b_s.
 \end{cases}
\end{equation}
We define two integral edge cochains on \(\widehat\A_\mu\).  For an edge beginning at
\(Q\), the \emph{dual cochain} is
\begin{equation}\label{eq:omega-beta}
 \omega_s^\beta=
 \begin{cases}
  0,&\text{noncyclic Knuth edge},\\
  \chi_s(\E(Q);\ol n)-\chi_s(\E(Q);\ol1),&\text{cyclic Knuth edge},\\
  \chi_s(\E(Q);\ol n),&Q\to\sigma^{-1}Q,
 \end{cases}
\end{equation}
and the \emph{AMBC cochain} is
\begin{equation}\label{eq:omega-rho}
 \omega_s^\rho=
 \begin{cases}
  0,&\text{noncyclic Knuth edge},\\
  \chi_s(Q;\ol1)-\chi_s(Q;\ol n),&\text{cyclic Knuth edge},\\
  \chi_s(Q;\ol1),&Q\to\sigma^{-1}Q.
 \end{cases}
\end{equation}
Reverse orientations receive negative weights.  Formulas
\eqref{eq:beta-cyclic-knuth}, \eqref{eq:rho-knuth}, \eqref{eq:R-dual}, and
\eqref{eq:R-ambc} say that these are precisely the changes of the first \(b_s\)
coordinates of \(\beta\) and \(\rho\), respectively.

Fix the reverse row superstandard tabloid \(Q_*:=\RRSS(\mu,\ol1)\).  For an oriented path
\(\gamma:Q_*\rightsquigarrow Q\), define two path potentials
\begin{equation}\label{eq:path-potentials}
 \mathcal F_s^\beta(\gamma):=\int_\gamma\omega_s^\beta,
 \qquad
 \mathcal F_s^\rho(\gamma):=\int_\gamma\omega_s^\rho.
\end{equation}
Individually, these may depend on the homotopy class of \(\gamma\), because each records
a nontrivial monodromy.

\begin{lemma}[Base-point calibration]\label{lem:base-calibration}
For every insertion tabloid \(P\) of shape \(\mu\), the correction over
\(Q_*=\RRSS(\mu,\ol1)\) is independent of \(P\) and is given by
\begin{equation}\label{eq:kappa-star}
 \kappa_i(P,Q_*)=\kappa_i^*(\mu)
 :=(i-1)\mu_i-\sum_{j>i}\mu_j.
\end{equation}
Consequently, at every block endpoint \(b_s\),
\begin{equation}\label{eq:base-prefix}
 \sum_{i=1}^{b_s}\kappa_i(P,Q_*)=-b_s\sum_{j>b_s}\mu_j.
\end{equation}
\end{lemma}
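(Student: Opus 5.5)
Since \(\kappa(P,Q_*)=\beta-\rho\) is, by Corollary~\ref{cor:kappa-independent}, independent of the dominant weight, I would compute it at one convenient permutation for each \(P\). The plan is to find, for every \(P\), an explicit \(w\) with \(\AMBC(w)=(P,Q_*,\rho)\) and known \(\rho\), compute its dual data \((\bar P,\bar Q,\lambda,N_0)\) directly, and read off \(\beta=\mu(N_0-2)-\lambda\).

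\textbf{Step 1: reduce to a single \(P\).} Left affine Knuth moves and left window shifts \(\Lop\) change only the insertion tabloid. By the inverse forms of Theorem~\ref{thm:dual-knuth} and \eqref{eq:rho-knuth}, the increments of \(\beta\) and \(\rho\) along a left edge are given by the same rows of \(P\). Here I identify \(\bar P=P\) via \eqref{eq:tabloid-comparison}, and I note that \eqref{eq:iterate-L} already exhibits equal \(\Lop\)-increments \(c_P(m)\) for \(\beta\) and \(\rho\). So \(c_\beta-c_\rho\) vanishes on every insertion-side edge with \(Q\) fixed. The graph on insertion tabloids, with affine Knuth edges and \(\sigma\), is connected by the result of \cite{CLP18} quoted above. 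Telescoping as in Lemma~\ref{lem:fiber-gauge} then shows that \(\kappa(P,Q_*)\) is the same for all \(P\). The delicate point is checking that the cyclic left-move formula for \(\beta\) (obtained via inversion and Theorem~\ref{thm:inverse-tabloids}) matches the AMBC one with the same sign convention. I expect this to follow from \(\E\sigma^{-1}=\sigma\E\), but it must be verified carefully.

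\textbf{Step 2: compute at \(P=Q_*\).} Take the permutation \(w_\mu\) whose AMBC streams are the "reverse superstandard" ones: each row \(i\) of \(Q_*\) is a block of consecutive residues, and the stream of density \(\mu_i\) is an increasing run placed in the corresponding consecutive positions. Its altitudes are made as low as dominance allows, e.g.\ \(\rho=0\), which is dominant since all local charges of \(\RRSS\) tabloids vanish. This \(w_\mu\) is a finite-type (longest-element-of-parabolic-like) permutation embedded in \(\Sn^{(0)}\), arranged in decreasing blocks of increasing runs. On such a finite permutation the dual growth diagram can be computed explicitly.

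\textbf{Step 3: evaluate the growth diagram.} The permutation window itself should not be stable, because color families of different densities are still interleaved. I would track, window by window, how the shadow lines of color \(i\) separate. The expectation is that, in the first stable window, row \(i\) of \(\lambda\) has received one box per period from each longer or equal stream that still had to pass it, so that \(\lambda_i-(N_0-2)\mu_i=\sum_{j>i}\mu_j-(i-1)\mu_i\). This yields \eqref{eq:kappa-star}, and is consistent with \eqref{eq:kappa-sum} since \(\sum_i(i-1)\mu_i=\sum_i\sum_{j>i}\mu_j\). This counting is the main obstacle. I would first test it on \(\mu=(2,1)\) and \((1,1)\), then prove it by induction on \(\ell\), removing the last (bottom) stream and using Lemma~\ref{lem:stable-cut}-style strip propagation to account for its crossings with the others.

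\textbf{Step 4: the prefix identity.} Summing \eqref{eq:kappa-star} over \(i\le b_s\) and using \(\sum_{i\le b}\bigl((i-1)\mu_i-\sum_{j>i}\mu_j\bigr)=-b\sum_{j>b}\mu_j\) gives \eqref{eq:base-prefix}. The last identity follows by swapping the double sum: the pairs \(i<j\le b\) cancel, and the pairs \(i\le b<j\) contribute \(-b\sum_{j>b}\mu_j\).
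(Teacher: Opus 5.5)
\textbf{There is a genuine gap in Steps~2--3.} Your Step~4 is correct; the prefix identity in fact holds for every $b$, not only at block endpoints. Step~1 is a reasonable alternative to the paper's remark that $P$ only reorders crossings inside one color family.

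The gap is that the value $\kappa^*(\mu)$, which is the entire content of the lemma, appears only as an ``expectation''. Moreover, the base point you choose cannot produce it.

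\begin{itemize}
\item Your $w_\mu$ is a finite permutation in $\Sn^{(0)}$. The paper records in Example~\ref{ex:prelim-dars} that such permutations have $N_0=2$ and $\lambda=\varnothing$, hence $\beta(w_\mu)=0$.
\item So your prediction that its first window is unstable contradicts the framework.
\item If $\AMBC(w_\mu)$ really were $(Q_*,Q_*,0)$, you would get $\kappa(Q_*,Q_*)=0$. But $\kappa^*_1(\mu)=-\sum_{j>1}\mu_j\neq0$ as soon as $\ell\ge2$.
\end{itemize}

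Already for $\mu=(1,1)$ your recipe gives $w_\mu=[2,1]$. Its recording tabloid is $(\{\ol1\},\{\ol2\})$, as in finite RS, not $Q_*=(\{\ol2\},\{\ol1\})$. Consistently, $\beta=0$ and $\kappa^*=(-1,1)$ would force $\rho=(1,-1)$. That weight is not dominant over $(P,Q_*)$ for any $P$, because $\lch_1(Q_*)=1$.

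The same computation refutes your claim that RRSS tabloids have vanishing local charges. $\rho=0$ is dominant for $(Q_*,Q_*)$ only because $P=Q$.

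\textbf{The missing idea} is a point of the fiber at which the dual growth diagram can actually be evaluated. The paper keeps $P$ arbitrary and, using Corollary~\ref{cor:kappa-independent}, takes $\rho_i=H(i-1)$ with $H$ larger than every offset. Then:
\begin{itemize}
\item the stream families are more than a fundamental window apart and do not meet before the stable boundary;
\item each family is read by ordinary insertion;
\item $\lambda_i[M]$ becomes a per-color boundary count fixed by the consecutive residue blocks of $Q_*$ (bottom row first), namely $\lambda_i[M]=\mu_i(M-2)-\rho_i-(i-1)\mu_i+\sum_{j>i}\mu_j$.
\end{itemize}
Independence of $P$ falls out of the same count, so no reduction step is needed.

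\textbf{If you keep Step~1}, do not justify it by ``inverse forms''. Inversion introduces $\Dom$ through \eqref{eq:AMBC-inverse}, and Corollary~\ref{cor:inverse-beta} is itself expressed through $\kappa$. Instead, repeat the proof of Theorem~\ref{thm:dual-knuth} with $\Lop$ in place of $\Rop$:
\begin{itemize}
\item noncyclic left moves fix the corner partition by the transposed Knuth diamond;
\item a cyclic left move becomes noncyclic after applying $\Lop$;
\item \eqref{eq:L-dual} and \eqref{eq:L-ambc} give identical increments because $\bar P=P$, so evacuation never enters.
\end{itemize}
You would still need two further facts: the AMBC statement that noncyclic left moves fix $\rho$, and that every tabloid edge lifts with $Q_*$ held fixed.
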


\begin{proof}
By Corollary~\ref{cor:kappa-independent}, we may choose a convenient dominant weight.
Take \(\rho_i=H(i-1)\), where \(H\) is larger than every offset constant occurring for
\((P,Q_*)\).  In backward AMBC the stream families are then separated by more than one
fundamental window.  Consequently their backward zigzags are read, one family at a
time, by semi-infinite ordinary insertion; no zigzag belonging to one density can meet
a zigzag belonging to another density before the stable boundary.

We record the resulting boundary count.  The domain residues of \(Q_*\) occur in
consecutive blocks, beginning with row \(\ell\) and proceeding from bottom to top.
For a sufficiently late standard window \(M\), color \(i\) therefore contributes
\begin{equation}\label{eq:calibration-lambda-count}
 \lambda_i[M]
 =\mu_i(M-2)-\rho_i-(i-1)\mu_i+\sum_{j>i}\mu_j.
\end{equation}
Indeed, translating the color-\(i\) stream through one altitude removes one occurrence
from the northeast boundary count; passing each of the preceding \(i-1\) row blocks
removes \(\mu_i\) occurrences, while the initial bottom-to-top displacement contributes
one occurrence for every residue in the later rows.  This is also obtained inductively:
the bottom row starts with residue \(\ol1\), and moving from row \(i+1\) to row \(i\)
changes the starting boundary position by \(\mu_{i+1}\); the displayed expression is
the resulting telescoping count.  The image residues in \(P\) change the order of the
crossings inside one color family but not their number, so the count is independent of
\(P\).

Subtracting \eqref{eq:calibration-lambda-count} from \(\mu_i(M-2)\) gives
\[
 \beta_i=\rho_i+(i-1)\mu_i-\sum_{j>i}\mu_j,
\]
which proves \eqref{eq:kappa-star}.  To obtain \eqref{eq:base-prefix}, sum over
\(i\le b_s\).  Because \(b_s\) is the end of an equal-row block, the terms involving
\(\mu_j\) with \(j\le b_s\) cancel pairwise, while each \(\mu_j\) with \(j>b_s\)
occurs \(b_s\) times with a minus sign.
\end{proof}

\begin{theorem}[Two-potential formula]\label{thm:two-potential}
The difference of the two path potentials is independent of the chosen path from
\(Q_*\) to \(Q\).  Writing
\begin{equation}\label{eq:xch}
 \xch_s(Q):=
 \mathcal F_s^\beta(\gamma)-\mathcal F_s^\rho(\gamma),
 \qquad \xch_s(Q_*)=0,
\end{equation}
one has, for every insertion tabloid \(P\),
\begin{equation}\label{eq:two-potential-formula}
 \sum_{i=1}^{b_s}\bigl(\beta_i-\rho_i\bigr)
 =\mathcal F_s^\beta(\gamma)-\mathcal F_s^\rho(\gamma)
  -b_s\sum_{j>b_s}\mu_j.
\end{equation}
Equivalently,
\begin{equation}\label{eq:A-xch}
 A_s(P,Q):=\sum_{i=1}^{b_s}\kappa_i(P,Q)
 =\xch_s(Q)-b_s\sum_{j>b_s}\mu_j.
\end{equation}
In particular, the block prefix is independent of \(P\).
\end{theorem}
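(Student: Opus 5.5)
Fix an insertion tabloid \(P\) and lift any oriented path \(\gamma:Q_*\rightsquigarrow Q\) in \(\widehat\A_\mu\) to a sequence of affine permutations with insertion tabloid \(P\). Knuth edges lift to right affine Knuth moves, and rotation edges \(Q\to\sigma^{-1}Q\) lift to \(w\mapsto\Rop w\) by \eqref{eq:R-ambc}. On the dual side, \(\bar Q=\E(Q)\) becomes \(\sigma\E(Q)\) by \eqref{eq:evac-rotation}, consistent with \eqref{eq:R-dual}. Only right moves and \(\Rop\) are used, so the AMBC insertion tabloid stays \(P\) throughout and so does \(\bar P=P\) by Theorem~\ref{thm:dual-knuth}. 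Connectivity of \(\widehat\A_\mu\) guarantees such a path exists. Along the lift, the paper already notes after \eqref{eq:omega-rho} that \(\omega_s^\beta\) and \(\omega_s^\rho\) are exactly the changes of the first \(b_s\) coordinates of \(\beta\) and \(\rho\). This follows from \eqref{eq:beta-cyclic-knuth}, \eqref{eq:rho-knuth}, \eqref{eq:R-dual} and \eqref{eq:R-ambc}, using that \(\chi_s(U;\ol a)=\pi_s(\e_{\row_U(\ol a)})\). Summing along the lift therefore gives
\[
\pi_s(\beta_{\mathrm{end}})-\pi_s(\beta_{\mathrm{start}})=\mathcal F_s^\beta(\gamma),\qquad
\pi_s(\rho_{\mathrm{end}})-\pi_s(\rho_{\mathrm{start}})=\mathcal F_s^\rho(\gamma).
\]

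Subtracting the two equations and applying Corollary~\ref{cor:kappa-independent} at both endpoints, where \(\beta-\rho=\kappa(P,\cdot)\), gives
\[
A_s(P,Q)-A_s(P,Q_*)=\mathcal F_s^\beta(\gamma)-\mathcal F_s^\rho(\gamma).
\]
The left side depends only on the endpoints \((P,Q)\) and \((P,Q_*)\), not on \(\gamma\). Hence the difference of potentials is path-independent, and it vanishes for the constant path at \(Q_*\), so \(\xch_s\) is well defined with \(\xch_s(Q_*)=0\). This is simply the prefix version of Lemma~\ref{lem:fiber-gauge}: the difference cochain is the coboundary of \(A_s(P,\cdot)\). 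Lemma~\ref{lem:base-calibration}, through \eqref{eq:base-prefix}, gives \(A_s(P,Q_*)=-b_s\sum_{j>b_s}\mu_j\). Substituting this yields \eqref{eq:A-xch}, and rewriting \(A_s\) as \(\sum_{i\le b_s}(\beta_i-\rho_i)\) gives \eqref{eq:two-potential-formula}. Neither \(\xch_s(Q)\) nor the base constant involves \(P\), so the block prefix is independent of \(P\).

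The main obstacle is the lifting step. First, each edge of \(\widehat\A_\mu\) starting at the current \(Q\) must be realized by an admissible right affine Knuth move on some \(w\) with \(Q(w)=Q\). I would take this from the tabloid Knuth theory of \cite{CLP18}: the tabloid graph edges are by definition the induced moves, and admissibility is independent of \(\rho\) up to choosing a dominant representative. Second, the weights must stay in the dominant region along the lift, or else the formulas \eqref{eq:rho-knuth} and \eqref{eq:beta-cyclic-knuth} must be read for the actual permutations reached. Since both formulas are stated for the actual moves, this causes no trouble: the integrated cochains measure true increments of \(\pi_s\beta\) and \(\pi_s\rho\), and Corollary~\ref{cor:kappa-independent} applies at whatever dominant endpoint the lift reaches. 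Reversed edges carry negated weights, consistent with Proposition~\ref{prop:beta-cocycle}(i).
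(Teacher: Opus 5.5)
Your proposal is correct and is essentially the paper's proof. The paper applies \(\pi_s\) to the fiber gauge identity (Lemma~\ref{lem:fiber-gauge}) to show that \(\omega_s^\beta-\omega_s^\rho\) is the coboundary of \(A_s(P,\cdot)\), integrates from \(Q_*\), and finishes with Lemma~\ref{lem:base-calibration}, just as you do; the one difference is that it works edge by edge, needing only some lift over \((P,Q)\) for each edge rather than a consecutive lift of the whole path, which sidesteps the lifting concern in your last paragraph.
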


\begin{proof}
Apply the prefix map \(\pi_s\) to the fiber gauge identity
\eqref{eq:fiber-gauge}.  By \eqref{eq:omega-beta} and \eqref{eq:omega-rho}, on every
edge \(e:Q\to Q'\) of \(\widehat\A_\mu\) this gives
\[
 \omega_s^\beta(e)-\omega_s^\rho(e)
   =A_s(P,Q')-A_s(P,Q).
\]
Thus the difference cochain is exact, including on rotation edges.  In particular, its
integral has zero period around every closed path, so
\(\mathcal F_s^\beta(\gamma)-\mathcal F_s^\rho(\gamma)\) depends only on the endpoint
\(Q\).  Since \(\widehat\A_\mu\) is connected, this defines \(\xch_s\) on every tabloid.

Integrating the displayed equality from \(Q_*\) to \(Q\) yields
\[
 \xch_s(Q)=A_s(P,Q)-A_s(P,Q_*).
\]
Lemma~\ref{lem:base-calibration} gives
\(A_s(P,Q_*)=-b_s\sum_{j>b_s}\mu_j\), independently of \(P\).  Substitution proves
\eqref{eq:two-potential-formula} and \eqref{eq:A-xch}; the same formula also proves the
asserted independence from \(P\).
\end{proof}

The theorem explains the term \emph{two-potential}: neither potential is an ordinary
function on the tabloid graph when monodromy is present, but their difference is an
ordinary integer-valued statistic because the periods coincide.

For the endpoint conventions used below, set
\begin{equation}\label{eq:endpoint-conventions}
 A_0=0,\qquad \xch_0=0.
\end{equation}
Since \(b_k=\ell\) and \(\sum_i\kappa_i=0\), one also has
\(A_k=\xch_k=0\).

\subsection{Local gaps and coordinate reconstruction}

Suppose \(\mu_i=\mu_{i+1}\).  Let \(J_i^\vee(P,\bar Q)\) be the least possible value of
\(\beta_{i+1}-\beta_i\) among compatible stable dual data with tabloids \((P,\bar Q)\).
Equivalently, it is the displacement at which the two equal-density colored shadow-line
families first become saturated without crossing.

\begin{proposition}[Equality of local slacks]\label{prop:local-slack}
Let \(\bar Q=\E(Q)\) and \(\mu_i=\mu_{i+1}\).  The AMBC and dual dominance slacks agree:
\begin{equation}\label{eq:slack-equality}
 \rho_{i+1}-\rho_i-\lch_i(P)+\lch_i(Q)
 =\beta_{i+1}-\beta_i-J_i^\vee(P,\bar Q).
\end{equation}
Consequently,
\begin{equation}\label{eq:local-kappa}
 \kappa_{i+1}(P,Q)-\kappa_i(P,Q)=C_i(P,Q),
\end{equation}
where
\begin{equation}\label{eq:C}
 C_i(P,Q):=J_i^\vee(P,\E(Q))-\lch_i(P)+\lch_i(Q).
\end{equation}
\end{proposition}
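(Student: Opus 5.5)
The plan is to fix the tabloid pair \((P,Q)\) and compare the two dominant regions inside the single fiber over \((P,Q)\), using the fact that \(\beta-\rho=\kappa(P,Q)\) is constant there (Corollary~\ref{cor:kappa-independent}).

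First I identify the dual region. The dual data with tabloids \((P,\bar Q)\) are exactly the images \(\beta(w)\) for \(w\) in the AMBC fiber over \((P,Q)\), because \(\bar P=P\) and \(\bar Q=\E(Q)\) by \eqref{eq:tabloid-comparison}, and \(\RSd\) is a bijection. Since \(\beta=\rho+\kappa\) on this fiber, the set of attainable \(\beta\) is the translate \(\kappa(P,Q)+D_{P,Q}\). Here \(D_{P,Q}\) is the AMBC dominant cone cut out by \eqref{eq:AMBC-dominance}. Consequently
\[
 J_i^\vee(P,\bar Q)=\min\{\beta_{i+1}-\beta_i\}
 =\kappa_{i+1}-\kappa_i+\min_{\rho\in D_{P,Q}}(\rho_{i+1}-\rho_i).
\]

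Second, I need the minimum of \(\rho_{i+1}-\rho_i\) over \(D_{P,Q}\) to equal exactly \(\lch_i(P)-\lch_i(Q)\). I will attain it using the packet moves from the proof of Corollary~\ref{cor:kappa-independent}. The slacks \(\delta_j\) in a block are independent coordinates: adding \(v_{s,j+1}\) changes only \(\delta_j\), and \(v_{s,a_s}\) translates the whole block. So I can start from any dominant \(\rho\) and lower \(\delta_i\) to \(0\) while keeping all other slacks nonnegative, which attains the bound. The inequalities for different blocks do not interact, so no other constraint can prevent this.

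Third, I combine the two steps. They give \(J_i^\vee=\kappa_{i+1}-\kappa_i+\lch_i(P)-\lch_i(Q)\), which rearranges to \eqref{eq:local-kappa} with \(C_i\) as in \eqref{eq:C}. Substituting \(\beta_{i+1}-\beta_i=\rho_{i+1}-\rho_i+\kappa_{i+1}-\kappa_i\) into the right-hand side of \eqref{eq:slack-equality} then gives the left-hand side, which proves the slack equality.

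The main obstacle is justifying that the paper's definition of \(J_i^\vee\) really is the minimum over this fiber. The definition is phrased geometrically, as a least displacement of saturated shadow-line families within compatible stable dual data, so it must be matched with the combinatorial fiber. If "compatible stable dual data" ranges over the full image of \(\RSd\) with these tabloids, the bijectivity argument above suffices. Otherwise I would show directly that conditions (i)--(iv) of the dual correspondence are preserved under the packet shifts of Lemma~\ref{lem:packet-stable-window}, so that the two feasible sets coincide. I would also check that the saturation description of \(J_i^\vee\) agrees with this minimum: at zero AMBC slack, the \(i\)-th and \((i+1)\)-st streams become concurrent, and the corresponding two colored families touch at a bump square.
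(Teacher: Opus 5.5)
Your proposal is correct and follows essentially the same route as the paper. You fix \((P,Q)\) and use the constancy of \(\kappa(P,Q)=\beta-\rho\) on the fiber, together with the bijectivity of \(\AMBC\) and \(\RSd\) and the identification \(\bar Q=\E(Q)\), to write \(J_i^\vee(P,\E(Q))\) as \(\kappa_{i+1}-\kappa_i\) plus the minimum of \(\rho_{i+1}-\rho_i\) over the dominant region, namely \(\lch_i(P)-\lch_i(Q)\); subtracting the resulting minimum identity from the gap identity then gives the slack equality, and your explicit attainment argument via the independent block slacks \(\delta_j\) simply spells out the paper's one-line remark that equality is allowed in the dominant region.
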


\begin{proof}
Fix \((P,Q)\).  Corollary~\ref{cor:kappa-independent} gives, throughout this fiber,
\[
 \beta_{i+1}-\beta_i
 =\rho_{i+1}-\rho_i+\kappa_{i+1}(P,Q)-\kappa_i(P,Q).
\]
By the AMBC dominance condition \eqref{eq:AMBC-dominance}, the least possible value of
the first gap on the right is
\(\lch_i(P)-\lch_i(Q)\).  Conversely, equality is allowed in the dominant region, so
this lower bound is attained by a dominant integral weight.  The AMBC bijection and the
dual correspondence parametrize the same affine permutations in the fiber.  Hence the
least possible value of the left side is precisely
\(J_i^\vee(P,\E(Q))\).  Taking minima in the displayed identity gives
\[
 J_i^\vee(P,\E(Q))
 =\lch_i(P)-\lch_i(Q)
   +\kappa_{i+1}(P,Q)-\kappa_i(P,Q),
\]
which is \eqref{eq:local-kappa}.  Subtracting this minimum identity from the original
gap identity gives \eqref{eq:slack-equality}.
\end{proof}

For \(j\in B_s\), set
\begin{equation}\label{eq:Ssj}
 S_{s,j}:=\sum_{i=a_s}^{j-1}C_i(P,Q),
 \qquad S_{s,a_s}=0.
\end{equation}

\begin{theorem}[Local--global decomposition]\label{thm:local-global}
For every equal-row block \(B_s=[a_s,b_s]\), put
\begin{equation}\label{eq:Sbar}
 \overline S_s:=\frac1{q_s}\sum_{r=a_s}^{b_s}S_{s,r}.
\end{equation}
Then the restriction of \(\kappa=\beta-\rho\) to \(B_s\) has the canonical
decomposition
\begin{equation}\label{eq:local-global}
 \kappa_j=
 \underbrace{S_{s,j}-\overline S_s}_{\displaystyle\kappa^{\mathrm{loc}}_j}
 +
 \underbrace{\frac{A_s-A_{s-1}}{q_s}}_{
             \displaystyle\kappa^{\mathrm{glob}}_s},
 \qquad j\in B_s.
\end{equation}
Here the local part has sum zero on \(B_s\) and is determined by the consecutive
dual-gap/local-charge data \(C_i\), while the global part is constant on \(B_s\) and is
determined by the two potentials.  More explicitly,
\begin{equation}\label{eq:global-potential}
 \kappa^{\mathrm{glob}}_s
 =b_{s-1}d_s-\tau_s
  +\frac{\xch_s(Q)-\xch_{s-1}(Q)}{q_s}.
\end{equation}
This is the unique decomposition of \(\kappa|_{B_s}\) into a zero-sum vector and a
constant vector.
\end{theorem}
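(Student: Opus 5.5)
Fix a block \(B_s=[a_s,b_s]\) and write \(\kappa=\kappa(P,Q)\).  The argument is linear algebra on \(\kappa|_{B_s}\), using only two inputs already proved: the consecutive differences from Proposition~\ref{prop:local-slack}, and the block prefix sums from Theorem~\ref{thm:two-potential}.

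\textbf{Step 1 (differences).}  By \eqref{eq:local-kappa}, \(\kappa_{i+1}-\kappa_i=C_i(P,Q)\) for \(a_s\le i<b_s\).  Telescoping from \(a_s\) gives \(\kappa_j=\kappa_{a_s}+S_{s,j}\) for every \(j\in B_s\), with \(S_{s,j}\) as in \eqref{eq:Ssj}.  So \(\kappa|_{B_s}\) is known up to one additive constant.

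\textbf{Step 2 (fixing the constant).}  Summing Step 1 over the block gives
\[
\sum_{j\in B_s}\kappa_j=q_s\kappa_{a_s}+q_s\overline S_s .
\]
The left side is a difference of prefix sums, \(A_s-A_{s-1}\).  This uses the convention \eqref{eq:endpoint-conventions} when \(s=1\), where \(A_0=0\).  Solving for \(\kappa_{a_s}\),
\[
\kappa_{a_s}=\frac{A_s-A_{s-1}}{q_s}-\overline S_s .
\]
Substituting into Step 1 yields \eqref{eq:local-global}.

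\textbf{Step 3 (the two parts).}  The local part \(S_{s,j}-\overline S_s\) sums to zero over \(B_s\) by the definition \eqref{eq:Sbar} of \(\overline S_s\), and it depends only on the \(C_i\).  The global part is constant on \(B_s\) by construction.  For uniqueness: a vector that is both zero-sum and constant on \(B_s\) must vanish.  Hence the splitting of \(\kappa|_{B_s}\) into a zero-sum vector plus a constant vector is unique, and the constant must be the block mean \((A_s-A_{s-1})/q_s\).

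\textbf{Step 4 (explicit global part).}  Insert \eqref{eq:A-xch} for \(A_s\) and \(A_{s-1}\), using \(\tau_{s-1}=\sum_{j>b_{s-1}}\mu_j\).  This gives
\[
A_s-A_{s-1}=\xch_s-\xch_{s-1}-b_s\tau_s+b_{s-1}\tau_{s-1}.
\]
Since block \(s\) has \(q_s\) rows of length \(d_s\), we have \(\tau_{s-1}=\tau_s+q_sd_s\).  Also \(b_s=b_{s-1}+q_s\).  Substituting both,
\[
-b_s\tau_s+b_{s-1}\tau_{s-1}=-q_s\tau_s+q_sb_{s-1}d_s .
\]
Dividing by \(q_s\) gives \eqref{eq:global-potential}.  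At \(s=1\) this uses \(b_0=0\), \(\xch_0=0\), and \(A_0=0\) from \eqref{eq:endpoint-conventions}; in that case \(b_0\tau_0=0\), so the value \(\tau_0=n\) never enters.

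Step 4 is the only point that needs care, namely the endpoint case \(s=1\).  The rest is routine telescoping.
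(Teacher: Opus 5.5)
Your proposal is correct and follows essentially the same route as the paper: you telescope the consecutive differences from Proposition~\ref{prop:local-slack}, fix the additive constant through the block sum \(A_s-A_{s-1}\), and substitute \eqref{eq:A-xch} together with \(b_s=b_{s-1}+q_s\) and \(\tau_{s-1}=q_sd_s+\tau_s\). Your explicit check of the endpoint case \(s=1\), where \(A_0=\xch_0=b_0=0\), spells out something the paper leaves implicit.
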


\begin{proof}
By Proposition~\ref{prop:local-slack},
\(\kappa_j=\kappa_{a_s}+S_{s,j}\) in the block.  Averaging over \(B_s\) shows that
the block average of \(\kappa\) is \((A_s-A_{s-1})/q_s\), and subtracting this average
gives \(S_{s,j}-\overline S_s\).  This proves \eqref{eq:local-global} and the asserted
uniqueness.  Finally, Theorem~\ref{thm:two-potential} gives
\[
 A_s-A_{s-1}
 =\xch_s-\xch_{s-1}-b_s\tau_s+b_{s-1}\tau_{s-1}.
\]
Since \(b_s=b_{s-1}+q_s\) and \(\tau_{s-1}=q_sd_s+\tau_s\), division by \(q_s\)
gives \eqref{eq:global-potential}.
\end{proof}

\begin{corollary}[Complete \texorpdfstring{\(\beta\)--\(\rho\)}{beta-rho} formula]\label{cor:complete}
Let \(\AMBC(w)=(P,Q,\rho)\).  With the block notation and endpoint conventions
above, for \(j\in B_s\) one has
\begin{equation}\label{eq:complete-formula}
 \begin{aligned}
 \beta_j={}&\rho_j+b_{s-1}d_s-\tau_s+S_{s,j}\\
 &+\frac{\xch_s(Q)-\xch_{s-1}(Q)
              -\displaystyle\sum_{r=a_s}^{b_s}S_{s,r}}{q_s}.
 \end{aligned}
\end{equation}
The numerator in the last term satisfies
\begin{equation}\label{eq:divisibility}
 \xch_s(Q)-\xch_{s-1}(Q)
 -\sum_{r=a_s}^{b_s}S_{s,r}\equiv0\pmod{q_s}.
\end{equation}
\end{corollary}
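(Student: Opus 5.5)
The formula \eqref{eq:complete-formula} is a substitution of \eqref{eq:local-global} and \eqref{eq:global-potential} into \(\beta_j=\rho_j+\kappa_j(P,Q)\) from Corollary~\ref{cor:kappa-independent}. The divisibility \eqref{eq:divisibility} then comes from the integrality of \(\kappa\).

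For the first step, take \(j\in B_s\). Theorem~\ref{thm:local-global} gives
\[
 \kappa_j=S_{s,j}-\overline S_s+b_{s-1}d_s-\tau_s+\frac{\xch_s(Q)-\xch_{s-1}(Q)}{q_s}.
\]
Replacing \(\overline S_s\) by its definition \eqref{eq:Sbar}, \(\frac1{q_s}\sum_{r=a_s}^{b_s}S_{s,r}\), merges the two fractional terms into the single quotient displayed in \eqref{eq:complete-formula}. Adding \(\rho_j\) and using \(\beta=\rho+\kappa(P,Q)\) gives the formula. At \(s=1\) the endpoint conventions \(b_0=0\) and \(\xch_0=0\) from \eqref{eq:endpoint-conventions} must be used, so that the term \(b_{s-1}d_s\) vanishes and no undefined potential appears.

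For the congruence, solve \eqref{eq:complete-formula} for the quotient:
\[
 \frac{\xch_s(Q)-\xch_{s-1}(Q)-\sum_{r}S_{s,r}}{q_s}=\beta_j-\rho_j-b_{s-1}d_s+\tau_s-S_{s,j}.
\]
It then suffices that each term on the right is an integer.

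\begin{itemize}
\item \(\beta\) is integral by Definition~\ref{def:beta}, and \(\rho\) is integral by \eqref{eq:AMBC}.
\item \(b_{s-1}\), \(d_s\) and \(\tau_s\) are integers by \eqref{eq:block-data}.
\item \(S_{s,j}\) is a sum of the \(C_i\). Each \(C_i\) is a difference of the integers \(J_i^\vee\) and \(\lch_i\), since \(J_i^\vee\) is a minimum of integral gaps \(\beta_{i+1}-\beta_i\).
\item \(\xch_s\) is integral because it is a difference of integrals of integral cochains.
\end{itemize}

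The quotient is therefore an integer, which is exactly \eqref{eq:divisibility}.

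The main point to check is that a dominant \(\rho\), and hence an actual \(w\), exists for every pair \((P,Q)\). This is what lets us evaluate \(\beta_j-\rho_j=\kappa_j\) and read off its integrality. Existence follows from the AMBC bijection: the dominance inequalities \eqref{eq:AMBC-dominance} are lower bounds on gaps, so taking \(\rho\) with large increasing gaps in each block satisfies them. The remaining work is bookkeeping, and an equivalent route is to note directly that the block sum \(A_s-A_{s-1}\) minus \(\sum_r S_{s,r}\) equals \(q_s\kappa_{a_s}\).
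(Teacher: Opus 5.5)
Your proposal is correct and follows the paper's proof: substitute \eqref{eq:Sbar} and \eqref{eq:global-potential} into \eqref{eq:local-global}, use \(\beta=\rho+\kappa\), and obtain the divisibility from the integrality of the block-constant term. The paper simply evaluates at \(j=a_s\), where \(S_{s,a_s}=0\), so the quotient is \(\kappa_{a_s}-b_{s-1}d_s+\tau_s\in\Z\); your separate integrality check for the \(C_i\) is therefore unnecessary, as is the existence of a dominant \(\rho\), which the hypothesis \(\AMBC(w)=(P,Q,\rho)\) already supplies.
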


\begin{proof}
Insert \eqref{eq:Sbar} and \eqref{eq:global-potential} into the decomposition
\eqref{eq:local-global}, and then use \(\beta_j=\rho_j+\kappa_j\).  This gives
\eqref{eq:complete-formula}.  The congruence follows because the block-constant term in
\eqref{eq:local-global} gives
\[
 \frac{\xch_s-\xch_{s-1}-\sum_{r=a_s}^{b_s}S_{s,r}}{q_s}
 =\kappa_{a_s}-b_{s-1}d_s+\tau_s\in\Z.
\]
\end{proof}

\subsection{Special shapes}

The local--global decomposition separates the ways in which repeated and distinct row
lengths contribute.  We record three useful extremes.

\begin{corollary}[Rectangular shape]\label{cor:rectangle}
Suppose \(\mu=(d^\ell)\).  There is one equal-row block, the monodromy lattice is
trivial, and the global part of \(\kappa\) vanishes.  With
\[
 S_j:=\sum_{i=1}^{j-1}C_i(P,Q),\qquad S_1=0,
\]
one has
\begin{equation}\label{eq:rectangle-formula}
 \beta_j=\rho_j+S_j-\frac1\ell\sum_{r=1}^{\ell}S_r.
\end{equation}
Thus in rectangular shape the consecutive local slacks determine the complete
correction.
\end{corollary}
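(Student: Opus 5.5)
The plan is to specialize the local--global decomposition (Theorem~\ref{thm:local-global}) and the complete formula (Corollary~\ref{cor:complete}) to the case of a single block. If \(\mu=(d^\ell)\), then every row has the same length, so the block decomposition \eqref{eq:blocks} has \(k=1\), \(B_1=[1,\ell]\), \(a_1=1\), \(b_1=\ell\), \(q_1=\ell\), and \(d_1=d\). Since no row lies beyond the block, \(\tau_1=0\). The endpoint conventions give \(b_0=0\).

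First I will settle the monodromy claim. By Theorem~\ref{thm:monodromy}, \(G_\mu\) consists of the integral vectors with zero sum that are constant on each equal-row block. With one block such a vector is constant, say \((x,\dots,x)\), and zero sum forces \(\ell x=0\), hence \(x=0\). So \(G_\mu=0\), consistent with rank \(k-1=0\).

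Next I will show the global part vanishes. Formula \eqref{eq:local-global} gives \(\kappa^{\mathrm{glob}}_1=(A_1-A_0)/q_1\). Here \(A_0=0\) by \eqref{eq:endpoint-conventions}, and \(A_1=\sum_{i=1}^{\ell}\kappa_i=0\) by \eqref{eq:kappa-sum}. Hence \(\kappa^{\mathrm{glob}}_1=0\). As a cross-check against \eqref{eq:global-potential}: \(b_0d_1-\tau_1=0\), and \(\xch_1=\xch_k=0\) and \(\xch_0=0\) are both recorded after \eqref{eq:endpoint-conventions}, so the formula also gives zero.

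Then \(\kappa_j=\kappa^{\mathrm{loc}}_j=S_{1,j}-\overline S_1\), where \(S_{1,j}=S_j\) is exactly the sum \(\sum_{i=1}^{j-1}C_i(P,Q)\) of \eqref{eq:Ssj}, and \(\overline S_1=\frac1\ell\sum_r S_r\). Adding \(\rho_j\) and using \(\beta=\rho+\kappa\) from Corollary~\ref{cor:kappa-independent} yields \eqref{eq:rectangle-formula}. Equivalently, one can substitute \(b_0=\tau_1=\xch_0=\xch_1=0\) directly into \eqref{eq:complete-formula}.

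The final sentence of the statement follows because the \(C_i\), defined in \eqref{eq:C}, involve only the consecutive local data \(J_i^\vee\) and \(\lch_i\). No step is a real obstacle. The only point needing care is to justify \(A_1=0\) directly from the zero-sum identity \eqref{eq:kappa-sum}, rather than relying on the potential formalism for \(\xch_1\).
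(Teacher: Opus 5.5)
Your proposal is correct and follows the paper's own proof: with \(k=1\) you get \(A_0=A_1=0\) (the latter from \(\sum_i\kappa_i=0\)) and \(G_\mu=0\), so the local--global decomposition \eqref{eq:local-global} reduces directly to \eqref{eq:rectangle-formula}. Your cross-check through \eqref{eq:global-potential}, using \(b_0=\tau_1=\xch_0=\xch_1=0\), is a harmless extra confirmation of the same specialization.
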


\begin{proof}
Here \(k=1\), \(A_0=A_1=0\), and \(G_\mu=0\).  Formula
\eqref{eq:local-global} immediately gives \eqref{eq:rectangle-formula}.
\end{proof}

\begin{corollary}[Pairwise distinct row lengths]\label{cor:distinct-parts}
Suppose \(\mu_1>\mu_2>\cdots>\mu_\ell\).  Every block is a singleton, so there are no
local slack terms.  With \(\xch_0=\xch_\ell=0\),
\begin{equation}\label{eq:distinct-formula}
 \beta_s-\rho_s
 =(s-1)\mu_s-\sum_{j>s}\mu_j
  +\xch_s(Q)-\xch_{s-1}(Q).
\end{equation}
Thus the two path potentials alone determine the correction.
\end{corollary}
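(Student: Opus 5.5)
This is a specialization of Theorem~\ref{thm:local-global} and Corollary~\ref{cor:complete} to the case where every block \(B_s\) is a singleton. Under \(\mu_1>\cdots>\mu_\ell\) the maximal equal-row blocks are \(B_s=\{s\}\) for \(s=1,\ldots,\ell\). Hence \(k=\ell\), \(a_s=b_s=s\), \(q_s=1\), \(b_{s-1}=s-1\), \(d_s=\mu_s\) and \(\tau_s=\sum_{j>s}\mu_j\).

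First, there are no indices \(i\) with \(a_s\le i<b_s\). So no consecutive gap \(C_i\) appears, and the sums \(S_{s,j}\) reduce to the single empty sum \(S_{s,s}=0\). Consequently \(\overline S_s=0\) and the local part \(\kappa^{\mathrm{loc}}\) vanishes. This is the precise content of ``no local slack terms''. It is also consistent with the absence of dominance inequalities across strict drops in \eqref{eq:AMBC-dominance}.

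Next, I would substitute these values into \eqref{eq:global-potential}. This gives
\[
\kappa_s=\kappa^{\mathrm{glob}}_s=(s-1)\mu_s-\sum_{j>s}\mu_j+\xch_s(Q)-\xch_{s-1}(Q).
\]
Combined with \(\beta_s=\rho_s+\kappa_s\) from Corollary~\ref{cor:kappa-independent}, this is \eqref{eq:distinct-formula}. The divisibility congruence \eqref{eq:divisibility} is vacuous since \(q_s=1\). The endpoint conventions \eqref{eq:endpoint-conventions} give \(\xch_0=0\). The remark after them, that \(A_k=\xch_k=0\) because \(b_k=\ell\) and \(\sum_i\kappa_i=0\), gives \(\xch_\ell=0\) once one notes \(b_\ell\tau_\ell=0\).

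As a sanity check, I would compare with Lemma~\ref{lem:base-calibration}. At \(Q=Q_*\) all \(\xch_s\) vanish, and the formula reduces to \(\kappa^*_s(\mu)\) from \eqref{eq:kappa-star}, as it should. Summing over \(s\) telescopes the \(\xch\) terms and reproduces \(\sum_i\kappa_i=0\), because \(\sum_s(s-1)\mu_s=\sum_s\sum_{j>s}\mu_j\).

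The only step requiring care is the bookkeeping of \(b_{s-1}d_s-\tau_s\) in the singleton case. Since \(b_{s-1}=s-1\), this term equals \((s-1)\mu_s-\sum_{j>s}\mu_j\), so there is no genuine obstacle: everything follows from earlier results by substitution.
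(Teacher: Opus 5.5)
Your proposal is correct and follows the paper's proof, which sets \(q_s=1\), \(a_s=b_s=s\) and \(S_{s,s}=0\) in Corollary~\ref{cor:complete}. Substituting into the equivalent formula \eqref{eq:global-potential} instead is the same specialization, and your check of \(\xch_\ell=0\) and your comparison with \(\kappa^*(\mu)\) at \(Q_*\) are sound bookkeeping rather than a different route.
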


\begin{proof}
Set \(q_s=1\), \(a_s=b_s=s\), and \(S_{s,s}=0\) in
Corollary~\ref{cor:complete}.
\end{proof}

\begin{corollary}[Exactly two row lengths]\label{cor:two-lengths}
Let \(\mu=(d^q,e^r)\) with \(d>e\).  Write
\(B_1=[1,q]\), \(B_2=[q+1,q+r]\).  There is a single nontrivial potential
\(\xch_1\), and
\begin{equation}\label{eq:two-length-A}
 A_1=\xch_1-qre,\qquad A_0=A_2=0.
\end{equation}
Consequently,
\begin{align}
 \kappa_j&=S_{1,j}-\overline S_1+\frac{\xch_1-qre}{q},
 &&1\le j\le q,
 \label{eq:two-length-first}\\
 \kappa_j&=S_{2,j}-\overline S_2-\frac{\xch_1-qre}{r},
 &&q<j\le q+r.
 \label{eq:two-length-second}
\end{align}
The opposite block averages exhibit directly how the zero-sum constraint couples the
two local systems.
\end{corollary}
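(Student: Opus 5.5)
This is a specialization of Corollary~\ref{cor:complete} and Theorem~\ref{thm:local-global} to \(k=2\), \(B_1=[1,q]\) and \(B_2=[q+1,q+r]\), so the plan is to substitute the block data and simplify. First I record the block parameters from \eqref{eq:block-data}: \(a_1=1\), \(b_1=q\), \(q_1=q\), \(d_1=d\), \(\tau_1=re\), and \(a_2=q+1\), \(b_2=q+r=\ell\), \(q_2=r\), \(d_2=e\), \(\tau_2=0\), with the convention \(b_0=0\). The endpoint conventions \eqref{eq:endpoint-conventions} give \(A_0=\xch_0=0\). The remark following them, using \(b_k=\ell\) and \(\sum_i\kappa_i=0\) from \eqref{eq:kappa-sum}, gives \(A_2=\xch_2=0\). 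Hence \(\xch_1\) is the only potential that can be nonzero, which justifies the phrase ``a single nontrivial potential''.

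Next I apply \eqref{eq:A-xch} with \(s=1\): \(A_1=\xch_1(Q)-b_1\sum_{j>b_1}\mu_j=\xch_1-q\cdot re\). This is \eqref{eq:two-length-A}.

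For the coordinates I use the decomposition \eqref{eq:local-global}, \(\kappa_j=S_{s,j}-\overline S_s+(A_s-A_{s-1})/q_s\). In the first block this gives \((A_1-A_0)/q=(\xch_1-qre)/q\), which is \eqref{eq:two-length-first}. In the second block it gives \((A_2-A_1)/r=-(\xch_1-qre)/r\), which is \eqref{eq:two-length-second}.

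As a consistency check, I would also compare with \eqref{eq:global-potential}. For \(s=1\) it gives \(0\cdot d-re+\xch_1/q\), which equals \((\xch_1-qre)/q\). For \(s=2\) it gives \(qe-0+(0-\xch_1)/r\), which equals \(-(\xch_1-qre)/r\). Both agree with the values above.

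No step is a genuine obstacle. The only point needing care is the bookkeeping of \(\tau_s\) and \(b_{s-1}\), together with confirming that \(A_2=0\) follows from the zero-sum property \eqref{eq:kappa-sum}. The closing remark is then immediate: the zero-sum local parts are fixed independently within each block, and the two block averages are \(+(\xch_1-qre)/q\) and \(-(\xch_1-qre)/r\). Weighted by the block sizes \(q\) and \(r\), these averages cancel, which is exactly the constraint \(\sum_i\kappa_i=0\).
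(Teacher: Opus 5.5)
Your proposal is correct and matches the paper's proof: \eqref{eq:two-length-A} is Theorem~\ref{thm:two-potential} at \(s=1\) with \(b_1=q\) and \(\tau_1=re\), and the two coordinate formulas come from applying Theorem~\ref{thm:local-global} to each block. Your cross-check against \eqref{eq:global-potential} and your verification that the size-weighted block averages cancel are correct, though the argument does not need them.
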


\begin{proof}
Formula~\eqref{eq:two-length-A} is Theorem~\ref{thm:two-potential} with
\(b_1=q\) and \(\tau_1=re\).  Apply Theorem~\ref{thm:local-global} to the two blocks.
\end{proof}

\begin{example}[Shape \(\mu=(3,3,1)\)]\label{ex:local-global}
At the calibrated reverse row superstandard point \(Q_*\), formula
\eqref{eq:kappa-star} gives
\[
 \kappa^*(\mu)=(-4,2,2).
\]
There are two blocks, \(B_1=\{1,2\}\) and \(B_2=\{3\}\).  Since
\(\xch_1(Q_*)=0\), the first block prefix is
\[
 A_1=-b_1\sum_{j>b_1}\mu_j=-2.
\]
Within \(B_1\), the local difference is
\(C_1=\kappa_2^*-\kappa_1^*=6\).  Hence
\[
 (S_{1,1},S_{1,2})=(0,6),\qquad \overline S_1=3.
\]
The local and global contributions are therefore
\[
 \kappa^{\mathrm{loc}}|_{B_1}=(-3,3),
 \qquad
 \kappa^{\mathrm{glob}}_1=\frac{A_1}{2}=-1,
\]
whose sum is \((-4,2)\).  On the singleton block,
\(A_2-A_1=2\), so the local part is zero and the global part is \(2\).  Thus
\[
 (-4,2,2)=\underbrace{(-3,3,0)}_{\text{blockwise local}}
            +\underbrace{(-1,-1,2)}_{\text{block-global}}.
\]
This small example shows that neither the local differences nor the block prefixes alone
recover \(\kappa\) when repeated and distinct row lengths occur together.
\end{example}

\subsection{Inverse weights and concluding remarks}

The correction vector now completes the inverse transformation begun in
Theorem~\ref{thm:inverse-tabloids}.

\begin{corollary}[The normalized weight under inversion]\label{cor:inverse-beta}
Let \(\AMBC(w)=(P,Q,\rho)\), so that
\(P=\bar P\) and \(Q=\E(\bar Q)\).  Then
\begin{equation}\label{eq:inverse-beta-final}
 \beta(w^{-1})
 =\Dom_{Q,P}\!\bigl(-\beta(w)+\kappa(P,Q)\bigr)
  +\kappa(Q,P).
\end{equation}
Equivalently,
\begin{equation}\label{eq:inverse-beta-rho}
 \beta(w^{-1})=\Dom_{Q,P}(-\rho(w))+\kappa(Q,P).
\end{equation}
Once the first stable index \(N_0^-\) of \(w^{-1}\) is known, its original partition is
recovered by
\begin{equation}\label{eq:inverse-lambda}
 \lambda^- =\mu(N_0^--2)-\beta(w^{-1}).
\end{equation}
\end{corollary}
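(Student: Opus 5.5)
The plan is to apply Corollary~\ref{cor:kappa-independent} twice, once to \(w\) and once to \(w^{-1}\), and to connect the two applications through the AMBC inversion rule \eqref{eq:AMBC-inverse}.

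First, fix \(\AMBC(w)=(P,Q,\rho)\). The comparison \eqref{eq:tabloid-comparison} gives \(\bar P=P\) and \(\bar Q=\E(Q)\). Since \(\E\) is an involution, this is equivalent to \(Q=\E(\bar Q)\), which accounts for the first sentence of the statement. By \eqref{eq:AMBC-inverse}, we have \(\AMBC(w^{-1})=(Q,P,\rho^-)\) with \(\rho^-=\Dom_{Q,P}(-\rho)\). Here \(\rho^-\) is dominant for the pair \((Q,P)\) by definition of \(\Dom\).

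Second, apply Corollary~\ref{cor:kappa-independent} to \(w^{-1}\). Its AMBC tabloid pair is \((Q,P)\), so the correction is \(\kappa(Q,P)\), and
\[
 \beta(w^{-1})=\rho(w^{-1})+\kappa(Q,P)=\Dom_{Q,P}(-\rho(w))+\kappa(Q,P).
\]
This is \eqref{eq:inverse-beta-rho}. Applying the same corollary to \(w\) itself gives \(\rho(w)=\beta(w)-\kappa(P,Q)\), so \(-\rho(w)=-\beta(w)+\kappa(P,Q)\). Substituting this into the previous display yields \eqref{eq:inverse-beta-final}.

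Finally, \eqref{eq:inverse-lambda} follows from Theorem~\ref{thm:beta-consistency}(ii) applied to \(w^{-1}\). That theorem needs the shape of the dual tabloids of \(w^{-1}\), and Theorem~\ref{thm:inverse-tabloids} shows this shape is again \(\mu\), since \(\E\) preserves shape. Minimality of \(N_0^-\) is not used; it only fixes which stable window is meant.

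The only point needing care is that \(\kappa(Q,P)\) is well defined, meaning that the correction for the swapped pair is the same function \(\kappa\) evaluated at \((Q,P)\). This holds because Corollary~\ref{cor:kappa-independent} is stated for an arbitrary tabloid pair. I also need \(\Dom_{Q,P}\) to be applied to a vector whose coordinate sum is preserved. As a consistency check, \(\sum_i\beta_i(w^{-1})=-\ind(w)=\ind(w^{-1})\) by \eqref{eq:kappa-sum} and Proposition~\ref{prop:beta-index}, which agrees with the index being a homomorphism.
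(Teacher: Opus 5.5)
Your proposal is correct and matches the paper's proof: you read off \(\rho(w^{-1})=\Dom_{Q,P}(-\rho(w))\) from the AMBC inversion rule, apply \(\beta=\rho+\kappa\) to the reversed pair \((Q,P)\), substitute \(\rho(w)=\beta(w)-\kappa(P,Q)\), and obtain \eqref{eq:inverse-lambda} by applying \eqref{eq:lambda-from-beta} to \(w^{-1}\). Your extra remarks, that \(w^{-1}\) again has shape \(\mu\) and the coordinate-sum check, fill in points the paper leaves implicit and are not needed for the argument.
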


\begin{proof}
By \eqref{eq:AMBC-inverse},
\(\rho(w^{-1})=\Dom_{Q,P}(-\rho(w))\).  Apply
\(\beta=\rho+\kappa\) to the reversed tabloid pair \((Q,P)\), and then substitute
\(\rho(w)=\beta(w)-\kappa(P,Q)\).  Formula~\eqref{eq:inverse-lambda} is the consistency
identity \eqref{eq:lambda-from-beta}.
\end{proof}

The comparison can therefore be summarized as follows.  The AMBC weight \(\rho\)
contains the movable stream altitude.  The correction \(\kappa(P,Q)\) depends only on
the two tabloids.  Its local differences are controlled by dual gaps and local charge,
while its block prefixes are controlled by the difference of the dual and AMBC path
potentials.  Window shifts and cyclic Knuth moves are precisely the elementary edge
increments of those potentials.  This places the original stable-window data, the
transformation rules, and the common monodromy in a single coordinate framework.

\section{Cell-theoretic consequences}\label{sec:cells}

We now translate the preceding comparison into the language of affine
Kazhdan--Lusztig cells.  All cell statements below are made in a fixed index stratum;
equivalently, one may translate to the index-zero affine Weyl group.  We use the
convention in which fixing the AMBC insertion tabloid gives a right cell and fixing the
recording tabloid gives a left cell \cite{CPY18,CLP18}.  A connected component generated
by affine Knuth moves inside such a right cell is a Kazhdan--Lusztig molecule.  Rotation
edges will be mentioned separately, since they change the index by one.

\subsection{Two affine coordinate systems on a cell}

\begin{proposition}[Change of coordinates on a cell]\label{prop:cell-coordinates}
Let
\[
 \AMBC(w)=(P,Q,\rho),
 \qquad
 \RSd(w)=(\bar P,\bar Q,\beta,N_0).
\]
Then
\begin{equation}\label{eq:cell-coordinate-change}
 \bar P=P,\qquad \bar Q=\E(Q),\qquad
 \beta=\rho+\kappa(P,Q).
\end{equation}
Consequently:
\begin{enumerate}[label=(\roman*),leftmargin=2.2em]
\item within a fixed index stratum, fixing \(\bar P\) describes the same right cell as
fixing \(P\), and fixing \(\bar Q\) describes the same left cell as fixing \(Q\);
\item for fixed \((P,Q)\), the AMBC and dual weight fibers differ by the integral affine
translation \(\rho\mapsto\rho+\kappa(P,Q)\);
\item this translation preserves the index hyperplane and intertwines every admissible
altitude-packet deformation.
\end{enumerate}
Thus, after the common index is fixed, \((Q,\rho)\) and \((\E(Q),\beta)\) are two
coordinate systems on the same right cell, with \(\kappa\) as their
tabloid-dependent change of origin.
\end{proposition}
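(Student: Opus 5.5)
The three identities in \eqref{eq:cell-coordinate-change} are already available, so the plan is mostly assembly. The tabloid identities \(\bar P=P\) and \(\bar Q=\E(Q)\) are the Huang--Zhang comparison \eqref{eq:tabloid-comparison}. The weight identity \(\beta=\rho+\kappa(P,Q)\) is the definition \eqref{eq:kappa} of the correction. That definition is legitimate because Corollary~\ref{cor:kappa-independent} shows \(\beta-\rho\) is constant on each fiber \(\{w:\AMBC(w)\in\{(P,Q)\}\times\Z^\ell\}\). The main work is to verify (i)--(iii) and the final assertion that \((Q,\rho)\) and \((\E(Q),\beta)\) are two coordinate systems on the same cell.

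For (i), fix the index stratum. By the cell convention recalled at the start of Section~\ref{sec:cells}, the right cell through \(w\) is the set of \(w'\) with \(P(w')=P(w)\), and the left cell is the set with \(Q(w')=Q(w)\). Since \(\bar P=P\) holds pointwise, the right-cell statement is immediate. For the left cell, \(\E\) is an involution and hence a bijection on \(\RSYT(\mu)\). Therefore \(\E(Q(w'))=\E(Q(w))\) holds if and only if \(Q(w')=Q(w)\), so fixing \(\bar Q\) cuts out the same left cell as fixing \(Q\).

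For (ii), restrict to a fixed \((P,Q)\). The map \(\rho\mapsto\beta\) is \(\rho\mapsto\rho+\kappa(P,Q)\), a translation by an integer vector that does not depend on \(\rho\). Because both \(\AMBC\) and \(\RSd\) are bijections, it carries the dominant AMBC fiber bijectively onto the set of attained dual weights with tabloids \((P,\E(Q))\).

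For (iii), index preservation follows from \eqref{eq:kappa-sum}: \(\sum_i\kappa_i=0\), so the translation preserves the index hyperplane, in agreement with Proposition~\ref{prop:beta-index} and \eqref{eq:rho-index}. Intertwining of packet deformations is exactly Proposition~\ref{prop:altitude}, which gives \(\rho\mapsto\rho+v_{s,a}\) and \(\beta\mapsto\beta+v_{s,a}\) with unchanged tabloids. The translation therefore commutes with adding \(v_{s,a}\), and likewise with any integral combination that stays dominant.

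Finally, for the coordinate-system statement, fix the index and \(P\); the right cell is then parametrized bijectively by \((Q,\rho)\) via \(\AMBC\). The map \((Q,\rho)\mapsto(\E(Q),\rho+\kappa(P,Q))\) is injective, because \(Q\) is recovered as \(\E(\bar Q)\) and then \(\rho=\beta-\kappa(P,Q)\). Its image is the set of dual coordinates of the cell, since both correspondences are bijections from the same set of affine permutations. Hence \((Q,\rho)\) and \((\E(Q),\beta)\) are two coordinate systems on the same right cell, related by the tabloid-dependent change of origin \(\kappa\).

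The only step that needs any care is the left-cell part of (i), which requires \(\E\) to be a bijection on tabloids of the fixed shape. This holds because \(\E\) is an involution, so no real obstacle is expected.
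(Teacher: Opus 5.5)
Your proposal is correct and follows essentially the same route as the paper. The paper takes the tabloid identities from the Huang--Zhang comparison \eqref{eq:tabloid-comparison}, the weight identity from the definition of \(\kappa\) justified by Corollary~\ref{cor:kappa-independent}, the cell statements from the AMBC description of cells, and (iii) from \(\sum_i\kappa_i=0\) together with Proposition~\ref{prop:altitude}; your observation that \(\E\) is an involution, hence a bijection on tabloids of shape \(\mu\), just makes explicit the left-cell step the paper leaves implicit.
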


\begin{proof}
The tabloid identities are \eqref{eq:tabloid-comparison}, and the weight identity is the
definition of \(\kappa\) in Corollary~\ref{cor:kappa-independent}.  The cell statements
follow from the AMBC description of left and right cells.  Finally,
\(\sum_i\kappa_i=0\) by \eqref{eq:kappa-sum}, and Proposition~\ref{prop:altitude}
shows that the two weights receive the same packet increment.
\end{proof}

\subsection{Gauge equivalence of the two cocycles}

Fix an insertion tabloid \(P\).  If \(e:Q\to Q'\) is an oriented affine Knuth or
rotation edge, identify the dual endpoint with \(\E(Q')\).  Denote the corresponding
AMBC and dual increments by \(c_\rho(e)\) and \(c_\beta(e)\).

\begin{theorem}[Cocycle gauge equivalence]\label{thm:gauge-equivalence}
On every recording-fiber edge, including an affine Knuth edge or a rotation edge,
\begin{equation}\label{eq:gauge-equivalence}
 c_\beta(e)-c_\rho(e)
   =\kappa(P,Q')-\kappa(P,Q).
\end{equation}
Hence the two vector-valued cocycles differ by the coboundary of \(\kappa\).  They
define isomorphic integral affine local systems and have equal periods on every closed
path.  On the affine Knuth subgraph of a fixed-index right cell, their common holonomy
is \(G_\mu\).
\end{theorem}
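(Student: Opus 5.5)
The identity \eqref{eq:gauge-equivalence} is essentially Lemma~\ref{lem:fiber-gauge}, restated in the cell-theoretic language of the present section, so the plan is to reduce to it and then read off the remaining assertions. Fix \(P\) and an oriented recording-fiber edge \(e:Q\to Q'\), lifted to \(w\to w'\) with \(\AMBC(w)=(P,Q,\rho)\) and \(\AMBC(w')=(P,Q',\rho')\). Here \(w'\) is obtained by a right affine Knuth move or by \(\Rop^{\pm1}\). By Theorems~\ref{thm:dual-knuth} and \ref{thm:one-step}, the insertion tabloid is the same at both ends, and the dual recording tabloids are \(\E(Q)\) and \(\E(Q')\); for rotation edges we use \eqref{eq:evac-rotation} to make \(\E(\sigma^{-1}Q)=\sigma\E(Q)\) agree with \eqref{eq:R-dual}. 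So the identification of dual endpoints in the statement is consistent. Next apply Proposition~\ref{prop:cell-coordinates}, i.e.\ Corollary~\ref{cor:kappa-independent}, at both endpoints: \(\beta(w)=\rho+\kappa(P,Q)\) and \(\beta(w')=\rho'+\kappa(P,Q')\). Subtracting gives \eqref{eq:gauge-equivalence} on every edge, with no case analysis needed.

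Then I would deduce the structural consequences. Define the \(0\)-cochain \(Q\mapsto\kappa(P,Q)\) on the recording-fiber graph. Equation~\eqref{eq:gauge-equivalence} says \(c_\beta-c_\rho=\delta\kappa\), so the two cocycles are cohomologous. Translation by \(\kappa(P,\cdot)\) on the fiber \(\Z^\ell\) over each vertex then intertwines the two parallel-transport rules, which gives the isomorphism of integral affine local systems. Summing \eqref{eq:gauge-equivalence} around a closed path telescopes to zero, so the periods are equal. For the holonomy on the affine Knuth subgraph of a fixed-index right cell, I would invoke Theorem~\ref{thm:monodromy}, which identifies the common monodromy lattice as \(G_\mu\) via \cite[Theorem~7.28]{CLP18}. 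I also note that Knuth edges preserve the index, so the fixed-index restriction is harmless.

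The main obstacle is the well-definedness behind the endpoint formula on rotation edges. Corollary~\ref{cor:kappa-independent} was proved within a fixed \((P,Q)\)-fiber, but rotation changes both the index and \(Q\). What needs checking is that \(\kappa(P,\sigma^{-1}Q)\) is the same function evaluated at the new tabloid, with no extra index-dependent term. I would argue that \(\kappa\) is defined fiberwise on all dominant weights, across all index strata simultaneously, since packet vectors \(v_{s,a_s}\) change the index. Hence the value at \(w'=\Rop w\) is just the fiber value of \((P,\sigma^{-1}Q)\), and no compatibility beyond Theorem~\ref{thm:one-step} is required. As a sanity check, I would also verify with \eqref{eq:R-dual}, \eqref{eq:R-ambc} that \(c_\beta-c_\rho\) on a rotation edge has coordinate sum zero, consistent with \eqref{eq:kappa-sum}.
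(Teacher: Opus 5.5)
Your proposal is correct and follows essentially the same route as the paper: the paper cites Lemma~\ref{lem:fiber-gauge}, which subtracts \(\beta=\rho+\kappa(P,Q)\) at the two endpoints using Corollary~\ref{cor:kappa-independent}, then telescopes around closed paths and invokes Theorem~\ref{thm:monodromy} for the Knuth-only holonomy \(G_\mu\). Your additional remark that \(\kappa(P,\cdot)\) is well defined across all index strata, because the packet vector \(v_{s,a_s}\) changes the index, makes explicit a point that the paper leaves implicit for rotation edges.
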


\begin{proof}
This is Lemma~\ref{lem:fiber-gauge}.  On a closed path the coboundary telescopes to
zero, so the periods coincide.  If the path uses only affine Knuth edges, it remains in
one index stratum, and Theorem~\ref{thm:monodromy} identifies the common image with
\(G_\mu\).
\end{proof}

The restriction to affine Knuth edges in the last sentence is essential.  The enlarged
graph with rotations still has equal \(\beta\)- and \(\rho\)-periods, but it has extra
periods that change the index.  For example, \(n\) successive right rotations form a
closed tabloid path and have common period \(\mu\), by
Corollary~\ref{cor:iterated-shifts}; this vector does not belong to \(G_\mu\) because
its coordinate sum is \(n\).

In this language, the two-potential statistic \(\xch_s\) is a scalar gauge: applying the
block-prefix functional \(\pi_s\) to \eqref{eq:gauge-equivalence} produces an exact
scalar cocycle.  Formula~\eqref{eq:A-xch} is its normalized primitive, and
Theorem~\ref{thm:local-global} reconstructs the full vector gauge from its block
primitives and its equal-row differences.

\subsection{Molecules and the charge obstruction}

Let \(\mu'\) be the conjugate partition and set
\begin{equation}\label{eq:dmu}
 d_\mu:=\gcd(\mu'_1,\mu'_2,\ldots).
\end{equation}
Extend the local-charge convention by setting \(\lch_i(Q)=0\) whenever
\(\mu_i\ne\mu_{i+1}\), and define the tabloid charge by
\begin{equation}\label{eq:tabloid-charge}
 \operatorname{charge}(Q):=\sum_{i=1}^{\ell-1}i\,\lch_i(Q).
\end{equation}
This is the normalization of \cite[Definition~8.3]{CLP18}.  Their
tabloid-connectivity theorem \cite[Theorem~8.6]{CLP18} says that
\begin{equation}\label{eq:charge-components}
 Q\text{ and }Q'\text{ lie in the same Knuth component}
 \quad\Longleftrightarrow\quad
 \operatorname{charge}(Q)\equiv\operatorname{charge}(Q')\pmod{d_\mu}.
\end{equation}

\begin{proposition}[Dual description of a molecule]\label{prop:dual-molecule}
Fix \(P\) and two valid dual data points
\((P,\E(Q),\beta)\) and \((P,\E(Q'),\beta')\) of shape \(\mu\) and of the same index.
They can lie in the same affine Knuth molecule only if
\begin{equation}\label{eq:molecule-charge}
 \operatorname{charge}(Q)\equiv\operatorname{charge}(Q')\pmod{d_\mu}.
\end{equation}
When \eqref{eq:molecule-charge} holds, choose any affine Knuth path
\(\gamma:Q\rightsquigarrow Q'\), and let \(\beta_\gamma\) be the result of transporting
\(\beta\) by the dual cocycle.  Then the two points lie in the same affine Knuth
molecule if and only if
\begin{equation}\label{eq:molecule-weight}
                         \beta'-\beta_\gamma\in G_\mu.
\end{equation}
The condition is independent of the chosen path \(\gamma\).
\end{proposition}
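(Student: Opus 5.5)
The plan is to move everything to the AMBC side, where molecule membership is already understood, and then carry the answer back to the dual coordinates using Proposition~\ref{prop:cell-coordinates} and the gauge identity of Theorem~\ref{thm:gauge-equivalence}. By \eqref{eq:cell-coordinate-change}, the dual points \((P,\E(Q),\beta)\) and \((P,\E(Q'),\beta')\) are the same affine permutations as the AMBC points \((P,Q,\rho)\) and \((P,Q',\rho')\), where \(\rho=\beta-\kappa(P,Q)\) and \(\rho'=\beta'-\kappa(P,Q')\). An affine Knuth molecule is a connected component of the graph of right affine Knuth moves inside the right cell of \(P\) at a fixed index. Such moves fix \(P\) and act on \(Q\) by the induced tabloid moves (Theorem~\ref{thm:dual-knuth} and its AMBC counterpart). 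Projecting to recording tabloids therefore turns any molecule path into a path in \(\A_\mu\). In particular, if the two points lie in one molecule, then \(Q\) and \(Q'\) lie in the same Knuth component, and \eqref{eq:charge-components} gives \eqref{eq:molecule-charge}. This settles necessity.

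For the main equivalence, fix an affine Knuth path \(\gamma:Q\rightsquigarrow Q'\) and lift it starting at \(w\). The key point is a lifting claim: every tabloid Knuth edge out of \(Q\) lifts to an admissible Knuth move out of every \(w\) in the fiber over \((P,Q)\). Lifts are unique because the move is determined by the residue, and the weight increment of the lift is the cocycle value from Theorem~\ref{thm:dual-knuth} or \eqref{eq:rho-knuth}. Granting the claim, the lift of \(\gamma\) ends at a permutation with data \((P,Q',\rho_\gamma)\), and correspondingly \((P,\E(Q'),\beta_\gamma)\). By \eqref{eq:gauge-equivalence}, summed along \(\gamma\),
\[
\beta_\gamma-\beta=\rho_\gamma-\rho+\kappa(P,Q')-\kappa(P,Q),
\]
and hence
\[
\beta'-\beta_\gamma=\rho'-\rho_\gamma.
\]

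Membership in the same molecule then reduces to a statement over the single vertex \(Q'\): two points \((P,Q',\rho_\gamma)\) and \((P,Q',\rho')\) are joined by an affine Knuth path exactly when they differ by the monodromy of a closed Knuth loop at \(Q'\). The "if" direction comes from lifting that loop, and the "only if" direction from projecting the connecting path to a loop. By Theorem~\ref{thm:monodromy} this monodromy group is \(G_\mu\), identically for \(\rho\) and \(\beta\). This yields \eqref{eq:molecule-weight}. A second choice \(\gamma'\) changes \(\beta_\gamma\) by the monodromy of the loop \(\gamma'\gamma^{-1}\), which lies in \(G_\mu\), so the condition does not depend on the path.

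I expect the main obstacle to be the lifting claim together with the dominance issue inside it. Transporting \(\rho\) by the raw cocycle might produce a non-dominant vector, so one has to check that the lifted endpoint weight is the one given by \eqref{eq:rho-knuth} and not merely \(\Dom_{P,Q'}\) of it. If that fails, \(G_\mu\) must be read as acting up to dominant representatives. My first attempt would be to use the CLP18 fact that admissibility of a tabloid Knuth edge depends only on the tabloid and that \eqref{eq:rho-knuth} is stated for actual moves. I would also use that \(G_\mu\) is constant on equal-row blocks, so adding an element of \(G_\mu\) preserves the dominance inequalities \eqref{eq:AMBC-dominance}; this is what makes the coset condition well defined on dominant weights.
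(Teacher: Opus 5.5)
Your proposal is correct and follows essentially the same route as the paper. Necessity comes from projecting a molecule path to the tabloid Knuth graph and applying \eqref{eq:charge-components}; the equivalence comes from comparing lifts through closed loops, whose monodromy is $G_\mu$ by Theorem~\ref{thm:monodromy}; path independence comes from the loop $\gamma'\gamma^{-1}$. Your detour through $\rho$-coordinates via the gauge identity only unwinds the proof of Theorem~\ref{thm:monodromy}, and the lifting claim you flag is exactly what the paper takes for granted when it asserts, before that theorem, that tabloid paths lift for a fixed insertion tabloid.
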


\begin{proof}
Condition \eqref{eq:molecule-charge} is exactly the existence criterion
\eqref{eq:charge-components} for an affine Knuth path between the two recording
tabloids.  Once one such path is fixed, every other lift with the same tabloid endpoints
differs from it by a closed affine Knuth path.  Theorem~\ref{thm:monodromy} says that the
possible changes of the lifted dual weight are precisely \(G_\mu\).  Therefore a lift
ending at \(\beta'\) exists exactly when \eqref{eq:molecule-weight} holds.  Replacing
\(\gamma\) changes \(\beta_\gamma\) by an element of the same lattice, proving path
independence.
\end{proof}

\subsection{Finiteness and the one-molecule criterion}

\begin{theorem}[Consequences for affine cells]\label{thm:cell-consequences}
Let \(w\) have shape \(\mu\), and work in its fixed index stratum.
\begin{enumerate}[label=(\roman*),leftmargin=2.2em]
\item If \(\mu\) is rectangular, then \(G_\mu=0\), and every affine Knuth molecule of
shape \(\mu\) is finite.
\item If \(\mu\) is nonrectangular, then \(G_\mu\ne0\), and every such molecule is
infinite.
\item The right cell containing \(w\) is a single affine Knuth molecule if and only if
the row lengths of \(\mu\) are pairwise distinct.
\item The tabloid Knuth graph has \(d_\mu\) connected components, cyclically indexed by
charge modulo \(d_\mu\); under the dual correspondence these are the possible recording
components for \(\bar Q=\E(Q)\).
\end{enumerate}
All four statements may be expressed with \(\beta\) in place of \(\rho\), with no change
to the lattice or the component structure.
\end{theorem}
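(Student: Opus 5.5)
The plan is to pass everything through the AMBC coordinates, where the finiteness and connectivity facts are already available, and then transport them to the dual side with Proposition~\ref{prop:cell-coordinates} and Theorem~\ref{thm:gauge-equivalence}. Fix the index stratum and the insertion tabloid \(P\). Within this stratum the right cell is parametrized by the pairs \((Q,\rho)\) with \(\rho\) dominant for \((P,Q)\) and \(\sum_i\rho_i=\ind(w)\). A molecule is the set of lifts of affine Knuth paths starting at \(w\). By Proposition~\ref{prop:dual-molecule}, the vertices of the molecule over a fixed recording tabloid \(Q'\) in the charge component of \(Q\) form a single coset \(\rho_\gamma+G_\mu\), intersected with the dominant region. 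Each of the four items is then a statement about this fibration, whose base is a finite component of the tabloid Knuth graph and whose fibers are \(G_\mu\)-cosets.

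For (i), when \(\mu=(d^\ell)\) there is one block and \(\sum_ix_i=0\) with all \(x_i\) equal, so \(G_\mu=0\) by Theorem~\ref{thm:monodromy}. Each fiber is then a single point, and the base \(\RSYT(\mu)\) is finite, so the molecule is finite.

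For (ii), when \(\mu\) is not rectangular we have \(k\ge2\), so \(\operatorname{rank}G_\mu=k-1>0\). An element of \(G_\mu\) is constant on blocks, and the dominance inequalities \eqref{eq:AMBC-dominance} only involve differences inside one block. Hence adding any \(x\in G_\mu\) preserves dominance, and \(\rho+\Z x\) stays in the molecule. This gives infinitely many vertices.

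For (iii), the right cell with fixed \(P\) is the union of the molecules over the \(d_\mu\) charge components. On a fixed \(Q\), the right cell contains every dominant \(\rho\) of the given index, and this set is the translate of a cone of rank \(\ell-1\) in the index hyperplane. The molecule reaches only a \(G_\mu\)-coset of rank \(k-1\). So a single molecule is possible only if \(k=\ell\), that is, only if the row lengths are distinct. Conversely, if the rows are distinct then \(G_\mu\) is the full zero-sum lattice and there are no dominance constraints, so every fiber is all of one index slice. It then remains to show \(d_\mu=1\); this holds because \(\mu'_1=\ell\) and \(\mu'_{\mu_1}\) equals the number of rows of maximal length, which is \(1\). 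With one component the whole cell is a single molecule.

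For (iv), the count of components is \eqref{eq:charge-components}, since charge takes every residue modulo \(d_\mu\); I expect this needs a realizability check, which I would take from \cite[Theorem~8.6]{CLP18}. Transporting by \(\bar Q=\E(Q)\) is a bijection of tabloids, which gives the dual statement. The final sentence of the theorem holds because \(\beta=\rho+\kappa(P,Q)\) is a fiberwise translation (Proposition~\ref{prop:cell-coordinates}) with identical periods (Theorem~\ref{thm:gauge-equivalence}).

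The main obstacle is the ``only if'' direction of (iii) in the case \(d_\mu=1\) but \(k<\ell\). There I must make sure that the fiber over a fixed \(Q\) really is strictly larger than the coset that a single molecule reaches. The first thing to try is to use the packet vector \(v_{s,a_s+1}\) for a block with \(q_s\ge2\). Adding it to \(\rho\) keeps \(\rho\) dominant, and by Proposition~\ref{prop:altitude} it gives another element of the same right cell over the same \(Q\) once a compensating packet keeps the index fixed. Its difference from \(\rho\) is not constant on the block, so it lies outside \(G_\mu\), and the new point belongs to a different molecule.
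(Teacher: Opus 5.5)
Your proposal is correct. For (i), (ii), (iv) and the final transfer to \(\beta\) it coincides with the paper's proof. In (i), \(G_\mu=0\), there are finitely many tabloids, and at most one weight is reachable over each. In (ii), a nonzero block-constant \(x\in G_\mu\) preserves both dominance and the index, so \(\rho+\Z x\) stays in one molecule because all of \(G_\mu\) is realized as monodromy. Item (iv) is taken from \cite[Theorem~8.6]{CLP18}; you are right that \eqref{eq:charge-components} by itself only bounds the number of components by \(d_\mu\). Theorem~\ref{thm:gauge-equivalence} then carries everything over to \(\beta\).

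The genuine difference is (iii). The paper simply cites \cite[Corollary~8.7]{CLP18}, with a one-line gloss about a weight obstruction and a tabloid-component obstruction. You instead derive it from the fibration in Proposition~\ref{prop:dual-molecule}. Over a fixed \(Q\), a molecule meets the right cell in a single \(G_\mu\)-coset of rank \(k-1\). The right-cell fiber, by contrast, is the set of lattice points of a dominance cone of full rank \(\ell-1\) in the index hyperplane, so \(k<\ell\) forces several molecules. For distinct row lengths, \(G_\mu\) is the whole zero-sum lattice and dominance is vacuous; moreover \(\mu'_{\mu_1}=1\) gives \(d_\mu=1\), so the base is connected.

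Your route makes both obstructions explicit. Since \(\kappa\) only translates each fiber, it also applies verbatim in \(\beta\)-coordinates. Its cost is that it rests on the ``if'' direction of Proposition~\ref{prop:dual-molecule}: realization of all of \(G_\mu\) from every starting weight, together with \eqref{eq:charge-components}. These are exactly the inputs the cited corollary packages.

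One small repair is needed in your last paragraph. A single compensating packet does not in general restore the index while keeping dominance. Use instead the combination \(q_sv_{s,a_s+1}-(q_s-1)v_{s,a_s}=v_{s,a_s+1}-(q_s-1)\e_{a_s}\). It has coordinate sum zero, raises the slack \(\rho_{a_s+1}-\rho_{a_s}-\lch_{a_s}(P)+\lch_{a_s}(Q)\) by \(q_s\), fixes every other slack, and is not constant on \(B_s\). Your rank comparison already settles that direction, so this witness is illustrative rather than essential.
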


\begin{proof}
For a rectangle, equality of adjacent coordinates throughout the single equal-row block,
together with coordinate sum zero, forces \(G_\mu=0\).  There are finitely many
recording tabloids and then at most one reachable weight over each, proving (i).  For a
nonrectangle, the lattice in \eqref{eq:monodromy-lattice} has positive rank; the
monodromy construction realizes all its elements, so iteration gives infinitely many
weights in the molecule.  These are also Corollary~7.6 and Theorem~7.28 of
\cite{CLP18}.

Statement (iii) is \cite[Corollary~8.7]{CLP18}: repeated row lengths leave a weight
obstruction not removable by monodromy, whereas pairwise distinct row lengths eliminate
both that obstruction and the tabloid-component obstruction.  Statement (iv) is
\cite[Theorem~8.6]{CLP18}.  Finally, Theorem~\ref{thm:gauge-equivalence} transfers every
path and every period from \(\rho\)-coordinates to \(\beta\)-coordinates.
\end{proof}

The special-shape formulas of Section~\ref{sec:comparison} now have a cell-theoretic
reading.  Rectangles have only the blockwise local correction and trivial holonomy;
pairwise distinct rows have only the global potential correction and a single molecule
per right cell; two-row-length shapes are the first family in which a nontrivial
monodromy direction couples two internally corrected blocks.  Thus the local--global
decomposition of \(\beta-\rho\) separates precisely the two sources of affine cell
complexity: repeated-row dominance inside blocks and monodromy between blocks.

\end{document}